\newtheorem{theorem}{Theorem}[section]
\newtheorem{lemma}[theorem]{Lemma}
\newtheorem{proposition}[theorem]{Proposition}
\newtheorem{corollary}[theorem]{Corollary}
\theoremstyle{definition}
\newtheorem{definition}[theorem]{Definition}
\newtheorem{remark}[theorem]{Remark}
\newcommand{\ms}{\mathsf}
\newcommand{\mc}{\mathcal}
\newcommand{\Aut}{\mathrm{Aut}}
\newcommand{\Real}{\mathbb R}
\newcommand{\Sph}{\mathsf{S}^1}
\renewcommand{\P}{\mathbb{P}}
\newcommand{\Z}{\mathbb Z}
\newcommand{\K}{\mathbb{K}}
\newcommand{\C}{\mathbb C}
\newcommand{\GL}{\mathsf{GL}}
\newcommand{\PGL}{\mathsf{PGL}}
\newcommand{\SL}{\mathsf{SL}}
\newcommand{\Sym}{\textup{Sym}}
\newcommand{\Gr}{\mathsf{Gr}}
\newcommand{\eps}{\varepsilon}
\newcommand{\dgr}{d_{\mathsf{Gr}}}
\newcommand{\dpr}{d_{\mathbb{P}}}
\newcommand{\hil}{\mathsf{d}}
\newcommand{\dist}[1]{\textup{dist}\!\left(#1\right)}
\newcommand{\identity}{\mathrm{id}}
\newcommand{\minus}{\smallsetminus}
\newcommand{\st}{:}
\renewcommand*\env@matrix[1][\arraystretch]{%
\edef\arraystretch{#1}%
\hskip -\arraycolsep
\let\@ifnextchar\new@ifnextchar
\array{*\c@MaxMatrixCols c}}
\date{\today}
\title{Singular value gap estimates for free products of semigroups}
\date{\today}
\author{Konstantinos Tsouvalas}
\address{Max Planck Institute for
  Mathematics in the Sciences, Inselstrasse 22, 04103 Leipzig, Germany}
\email{konstantinos.tsouvalas@mis.mpg.de}
\author{Theodore Weisman}
\address{Department of Mathematics, University of Michigan, Ann Arbor,
  MI 48109, USA
}
\email{tjwei@umich.edu}
\address{}
\begin{document}

\begin{abstract} We establish lower estimates for singular value gaps
  of free products of $1$-divergent semigroups
  $\Gamma_1,\Gamma_2\subset \mathsf{GL}_d(\mathbb{K})$ which are in
  ping-pong position. As an application, we prove that if $\Gamma_1$
  and $\Gamma_2$ are quasi-isometrically embedded subgroups in ping
  pong position, then the group they generate
  $\langle \Gamma_1,\Gamma_2\rangle$ is also quasi-isometrically
  embedded. In addition, we establish that the class of linear
  finitely generated groups, admitting a faithful linear
  representation over $\mathbb{R}$ which is a quasi-isometric
  embedding, is closed under free products.
\end{abstract}

\frenchspacing

\maketitle

\tableofcontents

\section{Introduction}

When a group $G$ acts on a space $X$, combination theorems relying on
``ping-pong'' inclusions of open subsets of $X$ give a robust means of
producing examples of subgroups of $G$ satisfying some desired
property. A typical setup for such a combination theorem is to
consider a pair of sub(semi)groups $\Gamma_1, \Gamma_2$ of the ambient
group $G$, and some configuration of sets in $X$ mapped into each
other by nontrivial elements of $\Gamma_1, \Gamma_2$. One then wishes
to show that the (semi)group $\Gamma$ generated by
$\Gamma_1, \Gamma_2$:
\begin{enumerate}[label=(\alph*)]
\item\label{item:pingpong_free_product} is naturally isomorphic to the
  free product $\Gamma_1 \ast \Gamma_2$, and
\item\label{item:pingpong_metatheorem} also satisfies some property
  shared by both $\Gamma_1$ and $\Gamma_2$.
\end{enumerate}
A simple application of the ping-pong lemma often suffices to prove
\ref{item:pingpong_free_product}, but proving that
\ref{item:pingpong_metatheorem} holds depends on the precise property
in question.

Theorems of this type date back to Klein's original 19th-century
combination theorems for discrete groups of isometries of hyperbolic
space \cite{Klein}, later developed further by Maskit \cite{Maskit88}
and carried through in numerous different contexts by other authors,
see e.g. \cite{LOW, MP, MPS, TW}.

In the present paper, we are concerned with proving ping-pong type
combination theorems for \emph{discrete subgroups of higher-rank
  semisimple Lie groups.} We focus on the case where
$\Gamma_1, \Gamma_2$ are sub(semi)groups of $\GL_d(\K)$, where
$\K = \Real$ or $\C$.

Our main results (\Cref{thm:pingpong_word_bound} and
\Cref{discrete-freeprod-1} below) give estimates for certain
\emph{singular value gaps} of elements in the semigroup
$\Gamma = \langle \Gamma_1, \Gamma_2 \rangle$, in terms of singular
value gaps of elements in $\Gamma_1$ and $\Gamma_2$. In particular,
these estimates can be used to show that singular value gap growth
properties of $\Gamma_1$ and $\Gamma_2$ can be passed to the
combination $\Gamma$, and we prove several corollaries of this type
below.

Our theorems are strong enough to recover combination theorems
regarding free products of \emph{Anosov subgroups} proved by
Dey-Kapovich \cite{Dey-Kapovich} and Danciger-Gu\'eritaud-Kassel
\cite{DGK24} (see also \cite[Thm. 1.3]{DK24}), but they also apply to
groups outside of this context. \Cref{thm:pingpong_word_bound} applies
to any pair of \emph{$1$-divergent semigroups} (a class which in
particular includes all subgroups of $1$-Anosov and relatively
$1$-Anosov groups), and \Cref{discrete-freeprod-1} can be applied to
essentially arbitrary discrete subgroups of semisimple Lie groups.

\subsection{Singular value gap estimates for $1$-divergent semigroups}

Before stating our first main theorem, we set up some notation and
terminology. For a matrix $g\in \mathsf{GL}_d(\mathbb{K})$ we denote
by $\sigma_1(g)\geq \cdots \geq \sigma_d(g)$ (resp.
$\ell_1(g)\geq \cdots \geq \ell_d(g)$) the singular values
(resp. moduli of eigenvalues) of $g$. For $1 \le i,j \le d$, we write
$\frac{\sigma_i}{\sigma_j}(g)$ for $\frac{\sigma_i(g)}{\sigma_j(g)}$
and $\frac{\ell_i}{\ell_j}(g)$ for $\frac{\ell_i(g)}{\ell_j(g)}$. A
semigroup $\Gamma \subset \GL_d(\K)$ is called \emph{$k$-divergent}
$(1 \le k < d$) if, for every sequence $\gamma_n$ of pairwise distinct
elements in $\Gamma$, we have
$\frac{\sigma_k}{\sigma_{k+1}}(\gamma_n) \to \infty$.

For semigroups $\Gamma_1, \Gamma_2\subset \mathsf{GL}_d(\mathbb{K})$
let $\langle \Gamma_1,\Gamma_2\rangle$ denote the semigroup they
generate. An expression
$\gamma_1\cdots \gamma_n\in \langle \Gamma_1,\Gamma_2\rangle$,
$n \geq 2$, is called a {\em reduced word} if all $\gamma_i$ are
non-trivial and no consecutive $\gamma_i$ lie in the same semigroup.

We use $\P(\K^d)$ to denote the projective space over the (real or
complex) vector space $\K^d$, and for each $1 \le k < d$ we let
$\Gr_k(\K^d)$ denote the Grassmannian of $k$-planes in $\K^d$. Recall
that the space of hyperplanes $\Gr_{d-1}(\K^d)$ is canonically
identified with the dual projective space $\P((\K^d)^\ast)$.

\begin{definition}\label{pingpong-position}
  Let $\Gamma_1, \Gamma_2 \subset \GL_d(\K)$ be semigroups, and let
  $U_1, U_2 \subset \P(\K^d)$ and $V_1, V_2 \subset \Gr_{d-1}(\K^d)$
  be open subsets. We say that $\Gamma_1, \Gamma_2$ are \emph{in
    ping-pong position relative to} $U_1, U_2$ and $V_1, V_2$ if,
  whenever $\{i, j\} = \{1, 2\}$, the following conditions hold:
  \begin{enumerate}
  \item The sets $\overline{U_i}$ and $\overline{V_j}$ are transverse.
  \item For every $\gamma \in \Gamma_i \minus \{I_d\}$, we have
    $\gamma \overline{U_j} \subset U_i$, and
    $\gamma^{-1}\overline{V_j} \subset V_i$.
  \end{enumerate}
\end{definition}

Our first main theorem is below:

\begin{theorem}
  \label{thm:pingpong_word_bound}
  Suppose that $\Gamma_1, \Gamma_2 \subset \GL_d(\K)$ are
  $1$-divergent semigroups, in ping-pong position relative to subsets
  $U_1, U_2\subset \mathbb{P}(\mathbb{K}^d), V_1, V_2\subset \mathsf{Gr}_{d-1}(\mathbb{K}^d)$. Then: 
  \begin{enumerate}[label=(\roman*)]
  \item\label{item:wordlength_bound} There exists $C_1 > 0$ and
    $\lambda > 1$ such that, for any reduced word
    $\gamma_1 \cdots \gamma_n \in \langle \Gamma_1, \Gamma_2 \rangle$,
    we have
    \[
      \frac{\sigma_1}{\sigma_2}(\gamma_1 \cdots \gamma_n) \geq
      C_1\lambda^n.
    \]
  \item\label{item:pingpong_1_bound} There exists $C_2 > 0$ such that,
    for any reduced word
    $\gamma_1 \cdots \gamma_n \in \langle \Gamma_1, \Gamma_2 \rangle$,
    we have
    \[
      \sigma_1(\gamma_1 \cdots \gamma_n) \geq C_2^n \sigma_1(\gamma_1)
      \cdots \sigma_1(\gamma_n).
    \]
  \item\label{item:pingpong_12_bound} There exists $C_3 > 0$ such
    that, for any reduced word
    $\gamma_1 \cdots \gamma_n \in \langle \Gamma_1, \Gamma_2 \rangle$,
    we have
    \[
      \frac{\sigma_1}{\sigma_2}(\gamma_1 \cdots \gamma_n) \geq  C_3^n
      \frac{\sigma_1}{\sigma_2}(\gamma_1) \cdots
      \frac{\sigma_1}{\sigma_2}(\gamma_n).
    \]
    \item\label{item:pingpong_12_bound-eigen} There exists $C_4 > 0$ such
    that, for every $n\in \mathbb{N}$ even and any reduced word
    $\gamma_1 \cdots \gamma_n \in \langle \Gamma_1, \Gamma_2 \rangle$,
    we have
    \[
      \frac{\ell_1}{\ell_2}(\gamma_1 \cdots \gamma_n) \geq C_4^n
      \frac{\sigma_1}{\sigma_2}(\gamma_1) \cdots
      \frac{\sigma_1}{\sigma_2}(\gamma_n).
    \]
  \end{enumerate}
\end{theorem}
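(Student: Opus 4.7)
The strategy I would follow relies on the standard machinery of Cartan decompositions, attracting/repelling flags, and the quasi-multiplicativity of singular value gaps. For $g \in \GL_d(\K)$, let $U^+(g) \in \P(\K^d)$ denote the line spanned by a top singular vector (the Cartan attractor) and $U^-(g) \in \Gr_{d-1}(\K^d)$ its repelling hyperplane. The key analytic input is the standard Cartan concentration estimate: for any $x$ with $\dist{x, U^-(g)} \geq \delta$, one has $\dist{g \cdot x,\, U^+(g)} \leq C\,\sigma_2(g)/(\delta\,\sigma_1(g))$.

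\emph{Step 1 (localizing Cartan attractors).} Combining 1-divergence of the $\Gamma_i$ with the ping-pong condition, I would first establish that $U^+(\gamma) \in \overline{U_i}$ and $U^-(\gamma) \in \overline{V_i}$ for every $\gamma \in \Gamma_i \setminus \{I_d\}$ (allowing, if necessary, a slight enlargement of the ping-pong sets to absorb error, or handling the finitely many small-gap exceptions separately). The point is that for $\gamma \in \Gamma_i$ of large singular gap, $\gamma\overline{U_j}$ concentrates near $U^+(\gamma)$ (since $\overline{U_j}$ is uniformly transverse to $\overline{V_i}$, hence to $U^-(\gamma)$, which is close to $\overline{V_i}$), and the image lies in $\overline{U_i}$ by hypothesis; the same argument applied to $\gamma^{-1}$ acting on $\Gr_{d-1}(\K^d)$ handles $U^-(\gamma)$.

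\emph{Step 2 (quasi-multiplicativity along the word).} I would then invoke the standard quasi-multiplicativity estimate: whenever $g, h \in \GL_d(\K)$ satisfy $\dist{U^+(h), U^-(g)} \geq \delta$,
\[
\sigma_1(gh) \geq c_1(\delta)\,\sigma_1(g)\sigma_1(h), \qquad \tfrac{\sigma_1}{\sigma_2}(gh) \geq c_2(\delta)\,\tfrac{\sigma_1}{\sigma_2}(g)\tfrac{\sigma_1}{\sigma_2}(h),
\]
with $U^+(gh)$ close to $U^+(g)$ and $U^-(gh)$ close to $U^-(h)$. Applied inductively to the partial products $P_k = \gamma_1\cdots\gamma_k$ (with $\gamma_k \in \Gamma_{i_k}$), the invariant that $U^-(P_k)$ remains close to $\overline{V_{i_k}}$, together with $U^+(\gamma_{k+1}) \in \overline{U_{i_{k+1}}}$ and $i_k \neq i_{k+1}$, yields a uniform $\dist{U^+(\gamma_{k+1}),\, U^-(P_k)} \geq \delta_0 > 0$ from the ping-pong transversality. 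Iterating immediately gives parts (ii) and (iii), with $C_2, C_3$ depending only on $\delta_0$.

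\emph{Step 3 (parts (i) and (iv)).} Part (iii) does not directly yield (i): the individual ratios $\sigma_1/\sigma_2(\gamma_k)$ may be arbitrarily close to $1$, and exceptional generators can repeat arbitrarily often in a reduced word. Instead, I would prove $\mathrm{diam}(P_n \overline{U_{i_{n+1}}}) \leq C\mu^n$ for some uniform $\mu \in (0,1)$ via a contraction principle: each $\gamma_k$ sends $\overline{U_{i_{k+1}}}$ into $U_{i_k}$ with strict compact containment, and uniform exponential contraction follows from combining the Cartan estimate (which gives very strong contraction for generators of large singular gap) with a pseudometric/compactness argument that controls the finitely many generators with bounded singular gap. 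A converse Cartan estimate then translates the diameter bound into $\sigma_1/\sigma_2(P_n) \gtrsim \mu^{-n}$. For (iv), the parity of $n$ forces $i_1 \neq i_n$, so $U^+(P_n) \in \overline{U_{i_1}}$ is uniformly transverse to $U^-(P_n) \in \overline{V_{i_n}}$; a standard proximality criterion (cf.\ Benoist) then gives $\ell_1/\ell_2(P_n) \geq c(\delta_0)\,\sigma_1/\sigma_2(P_n)$ once $\sigma_1/\sigma_2(P_n)$ is large, and combining with (iii) finishes (the lower-order cases are absorbed in the constant $C_4$).

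The main obstacle I expect is the uniform contraction estimate in Step 3 needed for part (i). The Cartan concentration estimate is not by itself sufficient to control the finitely many generators with $\sigma_1/\sigma_2$ close to $1$, which may have arbitrary multiplicity in a reduced word; one must instead extract a quantitative contraction ratio from the qualitative condition $\gamma\overline{U_j} \Subset U_i$, for instance via a suitable projective pseudometric adapted to the transverse flag configuration of the ping-pong sets.
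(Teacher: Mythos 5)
Your outline reproduces the paper's strategy closely, with two noteworthy deviations in how parts (iii) and (iv) are closed out. Your Step 1 is precisely the paper's Lemma~\ref{estimate-Xi}: after setting aside a bounded-gap subset $F$ of $\langle\Gamma_1,\Gamma_2\rangle$ (finite once $1$-divergence of the combination is established), the Cartan attractor of a word beginning in $\Gamma_i$ lies within $\varepsilon/8$ of $U_i$, and dually for the repeller. Your Step 2's $\sigma_1$ quasi-multiplicativity is Lemma~\ref{ratio-sigma1} together with the transversality bound \eqref{dist-reducedword1}, and the induction is exactly the paper's proof of \ref{item:pingpong_1_bound}. For part \ref{item:pingpong_12_bound}, however, the paper runs a considerably more elaborate argument via nested ball inclusions (Lemmas~\ref{Lipschitz}, \ref{gap-estimate1}, \ref{BPS-estimate-2}); your observation that the $\sigma_1/\sigma_2$ estimate follows from the $\sigma_1$ one via the identity $\frac{\sigma_1}{\sigma_2}(g)=\sigma_1(g)^2/\sigma_1(\wedge^2 g)$ and submultiplicativity of $\sigma_1(\wedge^2\cdot)$ is valid and in fact shorter than the paper's route (the paper uses this exact wedge-square trick, but only in the proof of \ref{item:pingpong_12_bound-eigen}). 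For part \ref{item:wordlength_bound}, you correctly diagnosed the central difficulty — the Cartan concentration estimate alone cannot handle generators with small gap, which may recur with arbitrary multiplicity — and your proposed resolution, a projective pseudometric contracted by strict compact nesting, is exactly what the paper implements with Zimmer's Carath\'eodory-type metric $\hil_U$ (Theorem~\ref{thm:caratheodory_metric_shrinks}\ref{item:c_metric_strong_contract}), applied to slightly enlarged ping-pong domains $N_\delta(U_i)$ via Lemma~\ref{lem:eps_nbhd_pingpong}. Finally, for part \ref{item:pingpong_12_bound-eigen} the paper takes a different and cleaner route: it applies part \ref{item:pingpong_1_bound} to the reduced powers $(\gamma_1\cdots\gamma_n)^m$ and uses Gelfand's formula $\ell_1(g)=\lim_m\sigma_1(g^m)^{1/m}$, which sidesteps the constant-bookkeeping in your proximality-criterion approach (where you would need to verify that small-$\sigma_1/\sigma_2$ words only occur for bounded $n$, which does follow from part \ref{item:wordlength_bound}, so your route can also be made to work). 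In short: same architecture and same key tools, with a simpler valid route for (iii) and a valid but slightly less economical route for (iv).
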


In a typical application of \Cref{thm:pingpong_word_bound}, the
semigroup $\langle \Gamma_1, \Gamma_2 \rangle$ is naturally isomorphic
to the free product of semigroups $\Gamma_1 \ast \Gamma_2$, and in
fact this always holds whenever $\Gamma_1$ and $\Gamma_2$ are both
groups. See \Cref{prop:free_product_faithful}.

Recall that a representation $\rho:H \to \GL_d(\K)$ is a
\emph{quasi-isometric embedding} if there exist $C \ge 0$ and
$c \ge 1$ such that for every $h \in H$,
\[
  c^{-1}|h|_{\mathsf{H}}-C\leq \log
  \frac{\sigma_1}{\sigma_d}(\rho(h))\leq c|h|_{\mathsf{H}}+C,
\]
where $|h|_H$ is the length of $h$ with respect to a choice of finite
generating set for $H$. One consequence of the estimates in Theorem
\ref{thm:pingpong_word_bound} is the following.

\begin{corollary}
  \label{cor:pingpong_qi_embed}
  Let $\Gamma_1, \Gamma_2 < \GL_d(\K)$ be two $1$-divergent subgroups
  which are in ping-pong position relative to subsets
  $U_1, U_2, V_1, V_2$, so that $\langle \Gamma_1, \Gamma_2 \rangle$
  is naturally identified with the free product
  $\Gamma_1 \ast \Gamma_2$. If both $\Gamma_1, \Gamma_2 < \GL_d(\K)$
  are quasi-isometrically embedded, then so is
  $\langle \Gamma_1, \Gamma_2 \rangle$.
\end{corollary}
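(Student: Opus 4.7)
Since $\Gamma_1$ and $\Gamma_2$ are groups in ping-pong position, Proposition~\ref{prop:free_product_faithful} identifies $\langle \Gamma_1, \Gamma_2 \rangle$ with the free product $\Gamma_1 \ast \Gamma_2$. Fix symmetric finite generating sets for $\Gamma_1$ and $\Gamma_2$ and take their union as a generating set for $\langle \Gamma_1, \Gamma_2 \rangle$; with respect to this choice the word length is additive along reduced expressions, i.e.\ $|g|_{\Gamma_1 \ast \Gamma_2} = \sum_{i=1}^n |\gamma_i|_{\Gamma_i}$ whenever $g = \gamma_1 \cdots \gamma_n$ is a reduced word. The upper bound $\log \tfrac{\sigma_1}{\sigma_d}(g) \leq c_0|g| + C_0$ in the quasi-isometric embedding inequality is immediate from submultiplicativity of the operator norm applied to $g$ and to $g^{-1}$ (and the fact that the generating set is symmetric).

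For the matching lower bound the plan is to apply part~(ii) of Theorem~\ref{thm:pingpong_word_bound} simultaneously to $g = \gamma_1 \cdots \gamma_n$ and to $g^{-1} = \gamma_n^{-1} \cdots \gamma_1^{-1}$, which is also a reduced word of length $n$. Multiplying the two resulting inequalities and using the identity $\sigma_1(h)\sigma_1(h^{-1}) = \tfrac{\sigma_1}{\sigma_d}(h)$ gives
\[
  \frac{\sigma_1}{\sigma_d}(g) \;\geq\; C_2^{2n} \prod_{i=1}^n \frac{\sigma_1}{\sigma_d}(\gamma_i).
\]
Taking logarithms and invoking the quasi-isometric embedding hypothesis $\log \tfrac{\sigma_1}{\sigma_d}(\gamma_i) \geq c^{-1}|\gamma_i|_{\Gamma_i} - C$ on each syllable then yields
\[
  \log \tfrac{\sigma_1}{\sigma_d}(g) \;\geq\; n(2\log C_2 - C) \;+\; c^{-1}\,|g|_{\Gamma_1 \ast \Gamma_2}.
\]
If $2\log C_2 \geq C$ this is already the required linear lower bound and the proof is complete.

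The genuinely problematic case, and the main obstacle, is when $C_2 < 1$: if the reduced word consists of many very short syllables the negative term $n\log C_2$ can a priori overwhelm $c^{-1}|g|$. To handle this I would combine the previous estimate with part~(i) of Theorem~\ref{thm:pingpong_word_bound}, which, using $\tfrac{\sigma_1}{\sigma_2} \leq \tfrac{\sigma_1}{\sigma_d}$, furnishes the alternative bound $\log \tfrac{\sigma_1}{\sigma_d}(g) \geq n \log \lambda + \log C_1$ with $\log\lambda > 0$ and \emph{no} dependence on the syllable lengths. A convex combination of these two lower bounds, with weight $t \in (0,1)$ chosen so that $t(2\log C_2 - C) + (1-t)\log \lambda = 0$, cancels the coefficient of $n$ and produces an inequality of the form $\log \tfrac{\sigma_1}{\sigma_d}(g) \geq (c')^{-1}|g| - C'$ for appropriate constants $c', C' > 0$, completing the verification of the quasi-isometric embedding property.
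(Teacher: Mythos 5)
Your proof is correct and follows the same essential strategy as the paper's: combine part~\ref{item:pingpong_1_bound} (or \ref{item:pingpong_12_bound}) with part~\ref{item:wordlength_bound} of Theorem~\ref{thm:pingpong_word_bound} to handle the tradeoff between total word length $|g|$ and syllable count $n$. Two of your choices are actually a bit cleaner than what appears in the paper: applying part~\ref{item:pingpong_1_bound} to both $g$ and $g^{-1}$ produces a direct lower bound on $\frac{\sigma_1}{\sigma_d}(g)$ rather than merely on $\sigma_1(g)$ (the paper bounds only $\sigma_1(g)$, which suffices after an implicit normalization $\sigma_1\sigma_d = 1$ but is slightly less transparent), and your convex-combination trick with weight $t$ replaces the paper's explicit two-case split on the average syllable length $\ell = |g|/n$ (the cases $\ell \geq -\tfrac{2}{L}\log c_1$ and $\ell \leq -\tfrac{2}{L}\log c_1$). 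The two packagings are of course equivalent in content; the convex combination simply avoids writing out the case dichotomy. You also correctly track the additive constant $C$ in the quasi-isometric embedding hypothesis, which the paper suppresses.
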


The notion of an Anosov semigroup was introduced in \cite[\S
5]{KP}. As another corollary of Theorem \ref{thm:pingpong_word_bound}
we obtain the following result.

\begin{corollary}\label{semigroup-Anosov} Let
  $\Gamma<\mathsf{GL}_d(\mathbb{K})$ be an $1$-Anosov
  subgroup. Suppose that the proximal limit set of $\Gamma$ and its
  dual $\Gamma^{\ast}$ are contained (in possibly different) affine
  charts of $\mathbb{P}(\mathbb{K}^d)$. There is an element
  $g\in \mathsf{GL}_d(\mathbb{K})$ and a finite-index subgroup
  $\Gamma'<\Gamma$ such that the semigroup $\langle \Gamma', g\rangle$
  is $1$-Anosov and isomorphic to the free product of the semigroups
  $\Gamma'$ and $\{g^n:n\geq 0\}$.
\end{corollary}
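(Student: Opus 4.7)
The plan is to construct an element $g$ and a finite-index subgroup $\Gamma' < \Gamma$ such that $\Gamma'$ and the cyclic semigroup $\{g^n : n \geq 0\}$ satisfy the hypotheses of \Cref{thm:pingpong_word_bound}. The affine-chart assumption furnishes a point $x_0 \in \P(\K^d)$ transverse to every element of $\Lambda^\ast$ and a hyperplane $H_0 \in \Gr_{d-1}(\K^d)$ transverse to every point of $\Lambda$. I pick $g \in \GL_d(\K)$ proximal with attracting fixed point $x_g^+$ close to $x_0$ and repelling hyperplane $H_g^-$ close to $H_0$; this makes $x_g^+$ transverse to $\Lambda^\ast$ and $H_g^-$ transverse to $\Lambda$. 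I then select small open neighborhoods $U_1 \supset \Lambda$, $V_1 \supset \Lambda^\ast$, $U_2 \ni x_g^+$ and $V_2 \ni H_g^-$ so that $\overline{U_i}$ and $\overline{V_j}$ are mutually transverse whenever $\{i,j\} = \{1,2\}$.

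Next I verify the ping-pong inclusions. After replacing $g$ by a sufficiently high power, the north--south dynamics of the proximal element $g$ yield $g^n \overline{U_1} \subset U_2$ and $g^{-n}\overline{V_1} \subset V_2$ for every $n \geq 1$. For the $\Gamma$-side, a standard contraction estimate for proximal matrices shows that whenever $\gamma \in \Gamma$ is proximal and $\frac{\sigma_1}{\sigma_2}(\gamma)$ is sufficiently large, the image $\gamma\,\overline{U_2}$ concentrates near $\gamma^{+} \in \Lambda$ and $\gamma^{-1}\overline{V_2}$ concentrates near $\gamma^{-} \in \Lambda^\ast$; the uniform transversality of $\overline{U_2}$ to $\Lambda^\ast$ and of $\overline{V_2}$ to $\Lambda$ makes these estimates uniform as $(\gamma^+,\gamma^-)$ varies over $\Lambda \times \Lambda^\ast$. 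Since $\Gamma$ is $1$-divergent, only finitely many of its elements violate the required singular-value-gap threshold; passing first to a torsion-free finite-index subgroup via Selberg's lemma, and then to a further finite-index subgroup disjoint from this finite bad set, produces $\Gamma' < \Gamma$ for which $\gamma\,\overline{U_2} \subset U_1$ and $\gamma^{-1}\overline{V_2} \subset V_1$ hold for every nontrivial $\gamma \in \Gamma'$.

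With these choices, $\Gamma'$ and $\{g^n : n \geq 0\}$ are $1$-divergent semigroups in ping-pong position relative to $U_1,U_2,V_1,V_2$. Parts (i) and (iii) of \Cref{thm:pingpong_word_bound}, together with the Anosov growth of $\frac{\sigma_1}{\sigma_2}$ inside $\Gamma'$ and the exponential growth of $\frac{\sigma_1}{\sigma_2}(g^n)$ in $n$, yield a uniform lower bound of the form $\frac{\sigma_1}{\sigma_2}(\eta) \geq C e^{c|\eta|_S}$ for every $\eta \in \langle \Gamma',g\rangle$, where $|\cdot|_S$ denotes word length with respect to a finite generating set. This is precisely the singular-value characterization of a $1$-Anosov semigroup from \cite{KP}. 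The same exponential lower bound on reduced words shows, via \Cref{prop:free_product_faithful}, that $\langle \Gamma',g\rangle$ is naturally isomorphic to the semigroup free product of $\Gamma'$ and $\{g^n : n\geq 0\}$.

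I expect the main obstacle to be the finite-index step: promoting the qualitative principle ``large singular-value gap implies ping-pong inclusion'' into a statement that holds for every nontrivial element of some $\Gamma'$. The affine-chart hypothesis is exactly what makes the contraction estimate uniform over the compact limit sets $\Lambda$ and $\Lambda^\ast$, and the $1$-divergence of $\Gamma$ then reduces verification to excluding a finite bad set, which residual finiteness of the linear group $\Gamma$ accommodates.
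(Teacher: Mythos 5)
Your proposal follows essentially the same route as the paper: use the affine-chart hypothesis to pick a proximal $g$ whose attracting point and repelling hyperplane are uniformly transverse to $\Lambda^\ast$ and $\Lambda$ respectively, set up ping-pong neighborhoods, pass to a finite-index subgroup of $\Gamma$ (and a power of $g$) using $1$-divergence and residual finiteness to eliminate the finite bad set, then invoke \Cref{thm:pingpong_word_bound} and \Cref{prop:free_product_faithful}. The Selberg's-lemma step you insert is superfluous (torsion elements already land in the finite bad set because they have bounded $\frac{\sigma_1}{\sigma_2}$), but it does no harm. The one place you should be more explicit is the free-product claim: \Cref{prop:free_product_faithful} requires $U_1 \cap U_2 = \emptyset$ or $V_1 \cap V_2 = \emptyset$ (condition (c) is unavailable since $\{g^n : n \ge 0\}$ is not a group), and you only record mutual transversality of $\overline{U_i}$ with $\overline{V_j}$. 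Disjointness does hold in your setup---since $x_0$ avoids every hyperplane of $\Lambda^\ast$ and the $1$-Anosov boundary maps satisfy $\xi \subset \xi^\ast$, we get $x_0 \notin \Lambda$, so $U_2$ can be shrunk to miss $U_1$---but this needs to be said, as the paper does when it notes that the ping-pong sets are disjoint by construction.
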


This corollary allows us to produce non-elementary examples of
$1$-Anosov semigroups in $\GL_d(\Real)$ which do \emph{not} generate
$1$-Anosov subgroups; see \Cref{sec:anosov_semigroup_not_subgroup}.

\begin{remark}
  Using techniques similar to the proof of \Cref{semigroup-Anosov},
  one can also use \Cref{thm:pingpong_word_bound} to recover special
  cases of combination theorems for Anosov subgroups due to
  Dey-Kapovich \cite{Dey-Kapovich,DK23} and
  Danciger-Gu\'eritaud-Kassel \cite{DGK24}. Since Anosov subgroups
  have uniform eigenvalue and singular value gap growth properties,
  these combination theorems themselves recover estimates as in
  \Cref{thm:pingpong_word_bound}---but \emph{only} in the case where
  the combined group is Anosov, which need not be the case for general
  applications of \Cref{thm:pingpong_word_bound}.

  We remark that the techniques in \cite{DK23} and \cite{DGK24} also
  establish combination theorems for nontrivial amalgams and HNN
  extensions of Anosov subgroups. We do not provide estimates as in
  \Cref{thm:pingpong_word_bound} for such combinations in this paper,
  although we expect that our methods also apply in this case.
\end{remark}

\subsection{Singular value gap estimates for arbitrary free products}

Our second main theorem applies more broadly, to arbitrary subgroups
of $\SL_d(\Real)$. We do not explicitly assume any ping-pong type
configuration of open subsets in the statement of this theorem; such a
configuration instead arises in the course of the proof.

\begin{theorem}\label{discrete-freeprod-1} For any
  $d\in \mathbb{N}_{\geq 2}$, there exists a representation
  \hbox{$\rho:\mathsf{SL}_d(\mathbb{R})\ast
    \mathsf{SL}_d(\mathbb{R})\rightarrow \mathsf{GL}_m(\mathbb{R})$},
  $m=\frac{d^2(d+1)^2}{4}+1$, with the following property. If
  $\Gamma_1,\Gamma_2<\mathsf{SL}_d(\mathbb{R})$ are discrete finitely
  generated subgroups, then there exist finite-index subgroups
  $H_1<\Gamma_1$ and $H_2<\Gamma_2$ such that the restriction of
  $\rho$ to $H_1\ast H_2$ is discrete and faithful. In addition, for
  any $\eps \in (0, 1)$, the finite-index subgroups $H_1, H_2$ can be
  chosen so that for every reduced word
  $\gamma_1\cdots \gamma_n\in H_1\ast H_2$, the following estimate
  holds:
\[
\frac{\sigma_1}{\sigma_m} \left(\rho(\gamma_1\cdots \gamma_n)\right)\geq  \prod_{i=1}^n \left(\frac{\sigma_1}{\sigma_d}(\gamma_i)\right)^{2-\varepsilon}.
\]
\end{theorem}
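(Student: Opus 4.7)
The plan is to reduce the estimate to item~\ref{item:pingpong_1_bound} of \Cref{thm:pingpong_word_bound}, applied to both a reduced word and its inverse, with the representation $\rho$ built from the symmetric square $\tau\colon \SL_d(\R) \to \GL(W)$, where $W = \Sym^2(\R^d)$ has dimension $d(d+1)/2$. The key input is the identity $\sigma_1/\sigma_{\dim W}(\tau(g)) = (\sigma_1/\sigma_d(g))^{2}$, which is the source of the exponent $2$ in the conclusion.

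I would set $E = W \otimes W \oplus \R$, so $\dim E = m$, and define $\rho_1, \rho_2 \colon \SL_d(\R) \to \GL(E)$ by $\rho_1(g) = (\tau(g) \otimes \mathrm{id}_W) \oplus 1$ and $\rho_2(g) = T\bigl((\mathrm{id}_W \otimes \tau(g)) \oplus 1\bigr) T^{-1}$, where $T \in \GL(E)$ is a fixed unipotent element coupling the $\R$-summand to a generic vector of $W \otimes W$. If $T$ were trivial, $\rho_1$ and $\rho_2$ would commute as they act on opposite tensor factors; the element $T$ breaks this commutativity, which is precisely the role of the extra one-dimensional summand. Setting $\rho := \rho_1 \ast \rho_2$ via the universal property of free products yields the candidate representation. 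Because the singular value spectrum of $\rho_i(g)$ is contained in $[\sigma_d(g)^2, \sigma_1(g)^2]$ up to bounded conjugation by $T$, one obtains $\sigma_1/\sigma_m(\rho_i(g)) \asymp (\sigma_1/\sigma_d(g))^{2}$.

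Given discrete finitely generated $\Gamma_1, \Gamma_2 < \SL_d(\R)$, I would choose finite-index subgroups $H_i < \Gamma_i$ so that (a) $\rho(H_1), \rho(H_2)$ are $1$-divergent in $\GL_m(\R)$ and in ping-pong position relative to subsets $U_1, U_2 \subset \P(E)$ and $V_1, V_2 \subset \Gr_{m-1}(E)$ as in \Cref{pingpong-position}, by exploiting the genericity of $T$ and passing to deep-enough finite-index subgroups to make the attracting limit sets transverse; and (b) every $\gamma \in H_i \smallsetminus \{I\}$ satisfies $\sigma_1/\sigma_d(\gamma) \geq c_0$ for a prescribed $c_0 = c_0(\varepsilon, C_2)$, achievable via residual finiteness of $\Gamma_i$ as a linear group. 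Discreteness and faithfulness of $\rho|_{H_1 \ast H_2}$ follow from (a) by the classical ping-pong lemma. Applying item~\ref{item:pingpong_1_bound} of \Cref{thm:pingpong_word_bound} to both $w = \gamma_1 \cdots \gamma_n$ and $w^{-1} = \gamma_n^{-1}\cdots \gamma_1^{-1}$ (both reduced in $H_1 \ast H_2$ since the $H_i$ are groups), and multiplying the two inequalities using the identity $\sigma_1(A)\sigma_1(A^{-1}) = \sigma_1/\sigma_m(A)$, gives
\[
\frac{\sigma_1}{\sigma_m}(\rho(w)) \geq C_2^{2n} \prod_{i=1}^n \frac{\sigma_1}{\sigma_m}(\rho(\gamma_i)) \geq C_2^{2n} \prod_{i=1}^n \left(\frac{\sigma_1}{\sigma_d}(\gamma_i)\right)^{2}.
\]
Choosing $c_0 \geq C_2^{-2/\varepsilon}$ absorbs the factor $C_2^{2n}$ into $\prod (\sigma_1/\sigma_d(\gamma_i))^{-\varepsilon}$, delivering the claimed bound with exponent $2-\varepsilon$.

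The hardest step will be the $1$-divergence required in (a): if $\Gamma_i$ has Cartan projection direction lying on a Weyl chamber wall---for instance, a virtually abelian subgroup with non-regular translation direction---then the top singular value of $\rho(\gamma)$ need not be isolated, and no finite-index subgroup remedies this for the naive tensor representation. Overcoming this will likely require a case analysis on the Zariski closure of $\Gamma_i$, with $\rho$ possibly enriched by summands sensitive to every wall of the positive Weyl chamber of $\SL_d(\R)$, or a refinement of the ping-pong argument replacing $1$-divergence with a weaker notion of proximality adapted to the Zariski closure.
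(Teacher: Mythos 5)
Your proposal does not follow the paper's route, and the route you chose has a fatal flaw that you yourself flag at the end but underestimate. With $\rho_1(g) = (\tau(g) \otimes \identity_W) \oplus 1$, the operator $\tau(g)\otimes \identity_W$ has every singular value of $\tau(g)$ repeated with multiplicity $\dim W$; in particular $\sigma_1 = \sigma_2$ for \emph{every} $g$, not only for those with non-regular Cartan projection. Conjugating by a fixed $T$ changes $\sigma_1/\sigma_2$ by at most a factor $\big(\sigma_1(T)\sigma_1(T^{-1})/\sigma_m(T)\sigma_m(T^{-1})\big)$, uniformly in $g$, so $\rho_i(H_i)$ is \emph{never} $1$-divergent in $\GL_m(\Real)$. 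This means no finite-index passage will let you put $\rho_1(H_1), \rho_2(H_2)$ into ping-pong position in the sense of \Cref{pingpong-position}, and the planned reduction to item \ref{item:pingpong_1_bound} of \Cref{thm:pingpong_word_bound} cannot begin. The issue is generic, not a degenerate corner case, and it is precisely the reason the paper's proof of \Cref{discrete-freeprod-1} does \emph{not} invoke \Cref{thm:pingpong_word_bound} at all.

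The paper's actual route is different in two places. First, the tensor representation is $\phi_d(g) = \Sym_d(g) \otimes \Sym_d(g)^{\ast}$ on $V \otimes V$ with $V = \Sym_d(\Real)$ (not $\tau(g)\otimes\identity$), which preserves a properly convex domain $D' \subset \P(V\otimes V)$ and satisfies $\sigma_1(\phi_d(g^{\pm 1})) = \big(\sigma_1/\sigma_d(g)\big)^2$. Then $\rho_1(g) = \phi_d(g) \oplus 1$ and $\rho_2 = w\rho_1 w^{-1}$ for a suitably proximal $w$. Second, and crucially, the singular-value gap estimate is replaced by the \emph{norm} estimate of \Cref{prop:pingpong_convex_norm_bound}: $\|\rho_i(g)v\| \ge \sigma_1(g)^{1-\eta}\|v\|$ for $[v]$ in a convex ping-pong set $\mc{C}_{3-i}$. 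This is established without any $1$-divergence hypothesis: \Cref{freeproduct-estimate-1} extracts, for each $g$, some index $k = k(g)$ at which $\rho_1(g)$ has a useful gap, and \Cref{lem:xi_avoids_compact} (using properness of the $\Aut(\Omega_t)$-action on the DGK interpolating domain $\Omega_t$ from \cite[Prop.~4.3]{DGK24}) keeps $\Xi_{m-k}(g^{-1})$ uniformly far from the compact $\overline{\mc{C}_{3-i}}$. Iterating the norm estimate along a reduced word immediately yields the product bound with exponent $2 - \eps$, sidestepping the proximality obstruction that sinks your approach. If you want to salvage your framework, the lesson is to work with a representation (like $\phi_d$) whose top singular value can be made simple after passing to finite index, rather than a tensor-with-identity, and even then the convex-domain/norm-estimate route is needed to handle subgroups that do not become $1$-divergent in any finite-index subgroup.
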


\begin{remark}
  The first part of \Cref{discrete-freeprod-1} (concerning the
  existence of finite-index subgroups so that the representation
  $\rho:H_1 \ast H_2 \to \GL_m(\Real)$ is faithful and discrete)
  follows from work of Danciger-Gu\'eritaud-Kassel \cite{DGK24}, and
  when we prove \Cref{discrete-freeprod-1} we will use their
  techniques to show that this part holds. Thus our main contribution
  in \Cref{discrete-freeprod-1} is the second part, giving the
  singular value gap estimate.
\end{remark}

As a consequence of the estimate in \Cref{discrete-freeprod-1}, we
establish that the class of linear groups admitting representations
which are quasi-isometric embeddings are closed under free products.

\begin{corollary}
  \label{cor:qi_free_products}
  Suppose that $\Gamma_1,\Gamma_2<\mathsf{GL}_d(\mathbb{R})$ are
  finitely generated and quasi-isometrically embedded subgroups. Then
  there exists $r\in \mathbb{N}$ and a faithful representation
  $\rho:\Gamma_1\ast \Gamma_2\rightarrow \mathsf{GL}_r(\mathbb{R})$
  which is a quasi-isometric embedding.
\end{corollary}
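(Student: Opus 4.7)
The plan is to reduce to \Cref{discrete-freeprod-1} via an auxiliary embedding $\GL_d(\Real)\hookrightarrow \SL_{d+1}(\Real)$, use that theorem to obtain a faithful quasi-isometric embedding of a subgroup of $\Gamma_1\ast\Gamma_2$, and then promote it to one of the whole free product. I would begin by passing from $\GL_d(\Real)$ to $\SL_{d+1}(\Real)$ via $\iota(g)=\mathrm{diag}(g,(\det g)^{-1})$, whose image has singular values given by the reordering of $\sigma_1(g),\ldots,\sigma_d(g),|\det g|^{-1}$. The elementary inequality $\tfrac{\sigma_1}{\sigma_{d+1}}(\iota(g))\geq \tfrac{\sigma_1}{\sigma_d}(g)$, together with the submultiplicative upper bound $\log \tfrac{\sigma_1}{\sigma_{d+1}}(\iota(g))\leq C|g|_{\Gamma_i}+C'$ that is automatic for any homomorphism, guarantees that $\tilde\Gamma_i:=\iota(\Gamma_i)$ is a discrete, finitely generated, quasi-isometrically embedded subgroup of $\SL_{d+1}(\Real)$.

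Fixing $\varepsilon\in(0,1)$, the next step is to apply \Cref{discrete-freeprod-1} to $\tilde\Gamma_1,\tilde\Gamma_2$ with $d+1$ in place of $d$. This yields a representation $\rho\colon\SL_{d+1}(\Real)\ast\SL_{d+1}(\Real)\to\GL_m(\Real)$ and finite-index subgroups $H_i<\tilde\Gamma_i$ on which $\rho$ is faithful, discrete, and satisfies the singular value gap estimate from the theorem. Since the linear groups $\tilde\Gamma_i$ are residually finite, I may shrink each $H_i$ so that every nontrivial element of $H_i$ has $\tilde\Gamma_i$-word length large enough to absorb the additive QI constant of the embedding $\tilde\Gamma_i\hookrightarrow\SL_{d+1}(\Real)$. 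Combining the estimate from \Cref{discrete-freeprod-1} with the QI lower bound for $\tilde\Gamma_i$ and the identity $|w|_{\tilde\Gamma_1\ast\tilde\Gamma_2}=\sum_i|\gamma_i|_{\tilde\Gamma_i}$ for reduced words $w=\gamma_1\cdots\gamma_n$ in a free product, one obtains
\[
\log\frac{\sigma_1}{\sigma_m}\bigl(\rho(w)\bigr)\;\gtrsim\;|w|_{H_1\ast H_2}
\]
for every reduced word $w\in H_1\ast H_2$. The matching upper bound is automatic, so $\rho|_{H_1\ast H_2}$ is a faithful quasi-isometric embedding into $\GL_m(\Real)$.

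The main obstacle is upgrading this to a faithful quasi-isometric embedding of the whole group $\Gamma:=\Gamma_1\ast\Gamma_2$: unlike the direct-product setting, $H:=H_1\ast H_2$ typically has \emph{infinite} index in $\Gamma$ even when each $H_i$ has finite index in $\Gamma_i$ (for example $2\Z\ast 2\Z$ inside $\Z\ast\Z$), which precludes a direct use of induced representations. The strategy I would follow is to analyze $\rho$ on all of $\Gamma$ by decomposing each $\gamma_i\in\Gamma_i$ via a finite system of left-coset representatives of $H_i$ in $\Gamma_i$; a reduced word in $\Gamma$ thereby expands into a longer alternating product of coset representatives and $H_i$-elements, of total $\Gamma$-length still comparable to $\sum_i|\gamma_i|_{\Gamma_i}$. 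Tracking the ping-pong configuration for $\rho$ that underlies the proof of \Cref{discrete-freeprod-1} through this expanded word, and invoking \Cref{thm:pingpong_word_bound}, should yield the singular value gap lower bound on all of $\Gamma$. Finally, since free products of finitely generated linear groups are linear, augmenting $\rho$ with a faithful linear representation of $\Gamma_1\ast\Gamma_2$ produces the desired faithful quasi-isometric embedding $\Gamma_1\ast\Gamma_2\to\GL_r(\Real)$ for some $r$. The technical heart of the argument is verifying that the ping-pong estimates survive this coset decomposition.
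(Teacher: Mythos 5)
Your first two steps are sound and match the paper's: reduce from $\GL_d(\Real)$ to $\SL_{d+1}(\Real)$ (the paper elides this, but it is needed, and your determinant-normalized embedding with the inequality $\tfrac{\sigma_1}{\sigma_{d+1}}(\iota(g))\geq\tfrac{\sigma_1}{\sigma_d}(g)$ handles it correctly), then apply \Cref{discrete-freeprod-1} to obtain finite-index subgroups $H_1, H_2$ with $H_1\ast H_2$ quasi-isometrically embedded. You also correctly identify the real obstacle, namely that $H_1\ast H_2$ has infinite index in $\Gamma_1\ast\Gamma_2$ so induced representations do not directly apply.

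Your proposed resolution of that obstacle, however, has a genuine gap, and you flag it yourself as ``the technical heart.'' Decomposing each $\gamma_i$ into coset representatives times $H_i$-elements and trying to push the ping-pong estimate through the expanded word does not work: the coset representatives form a fixed finite set that in general includes elements close to the identity, and these do \emph{not} satisfy the contraction inclusions of \Cref{prop:pingpong_convex_norm_bound} (which explicitly requires $g\notin\mathcal{K}$ for a compact neighborhood $\mathcal{K}$ of the identity). Worse, the expanded word is no longer an alternating reduced word in the sense required by \Cref{thm:pingpong_word_bound}---the coset representatives and the $H_i$-letters live in the same free factor, so the alternation structure is destroyed and the inductive ball-nesting argument of the theorem does not apply. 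Already in the toy example $\Z\ast\Z\supset 2\Z\ast 2\Z$ with coset representatives $\{e,a\}$ and $\{e,b\}$, a word such as $abab$ cannot be written in terms of the free product structure of $H_1\ast H_2$ at all; the ping-pong does not ``survive.''

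The paper resolves this differently, in two moves you are missing. First (the lemma reproduced from \cite{dt2022anosov}): it does \emph{not} try to understand all of $\Gamma_1\ast\Gamma_2$ via $H_1\ast H_2$ directly, but instead, after making $H_i\lhd\Gamma_i$ normal, considers $\Gamma_0:=\bigcap_i\ker(\Gamma_1\ast\Gamma_2\to\Gamma_i/H_i)$. This $\Gamma_0$ \emph{does} have finite index in $\Gamma_1\ast\Gamma_2$, and by a covering-space (Kurosh subgroup theorem) argument $\Gamma_0$ is isomorphic to a free product of finitely many copies of $H_1$, $H_2$, and $\Z$, which in turn embeds as an undistorted subgroup of $H_1\ast H_2$ via conjugation by powers of fixed infinite-order elements $\gamma_1\in H_1$, $\gamma_2\in H_2$. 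Composing gives a faithful QI embedding of $\Gamma_0$. Second, the step from $\Gamma_0$ to $\Gamma_1\ast\Gamma_2$ is then a genuine finite-index extension, so induced representations apply: a faithful QI embedding $\Gamma_0\to\SL_n(\Real)$ induces a faithful QI embedding $\Gamma_1\ast\Gamma_2\to\SL_{nm}(\Real)$ where $m=[\Gamma_1\ast\Gamma_2:\Gamma_0]$. In short, the paper bypasses the coset-decomposition entirely by rerouting through an undistorted subgroup of $H_1\ast H_2$ that happens to also be finite index in $\Gamma_1\ast\Gamma_2$.
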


\subsection{Other semisimple Lie groups} In this paper we only
explicitly consider discrete subgroups of $\GL_d(\K)$ for $\K = \Real$
or $\C$. However, it is still possible to indirectly apply our results
towards discrete subgroups of an arbitrary semisimple Lie group $G$,
by mapping these discrete subgroups into $\GL_d(\K)$ via some
representation $G \to \GL_d(\K)$. We give a rough idea of this below.

If $P$ is a parabolic subgroup of $G$, then one can consider
\emph{$P$-divergent} groups $\Gamma \subset G$; for an appropriate
choice of representation $G \to \GL_d(\K)$, a group $\Gamma$ is
$P$-divergent if and only if its image in $\GL_d(\K)$ is $1$-divergent
(see e.g. \cite[Sec. 3]{GGKW}). One can consider $P$-divergent
subgroups $\Gamma_1, \Gamma_2$ of $G$ which are in ping-pong position
with respect to open subsets of the \emph{flag manifold} $G/P$, as
well as its natural opposite. One can show that in this situation, it
is possible to pass to finite-index subgroups $\Gamma_1', \Gamma_2'$
of $\Gamma_1, \Gamma_2$, whose images in $\GL_d(\K)$ are in ping-pong
position with respect to open subsets of $\P(\K^d)$ and
$\Gr_{d-1}(\K^d)$. Thus, one may apply \Cref{thm:pingpong_word_bound}
to obtain estimates for the behavior of the Jordan and Cartan
projections of elements in the free product
$\Gamma_1' \ast \Gamma_2'$, with respect to the roots of $G$
determined by the parabolic subgroup $P$.

\subsection*{Acknowledgements} The authors would like to thank
Subhadip Dey for several helpful comments and corrections. The second
author was partially supported by NSF grant DMS-2202770.
 
\section{Preliminaries}
\label{sec:preliminaries}

\subsection{Geometry of Grassmannians over $\K^d$}
Let $\mathbb{K}=\mathbb{R}$ or $\mathbb{C}$. For $d \ge 2$,
$(e_1,\ldots,e_d)$ is the canonical basis of $\mathbb{K}^d$ equipped
with the standard (Hermitian) inner product
$\langle \cdot,\cdot\rangle$, and $\Sph(\mathbb{K}^d)$ is the unit
sphere in $\mathbb{K}^d$ with respect to this inner product. We denote
by $\mathsf{K}_d$ the maximal compact subgroup of
$\mathsf{GL}_d(\mathbb{K})$ preserving the inner product
$\langle \cdot,\cdot \rangle$ (i.e. $\mathsf{K}_d=\mathsf{O}(d)$ if
$\mathbb{K}=\mathbb{R}$ and $\mathsf{K}_d=\mathsf{U}(d)$ if
$\mathbb{K}=\mathbb{C}$).

 Given a unit vector $v\in \mathbb{K}^d$, $v^{\perp}=\{\omega\in
 \mathbb{K}^d:\langle \omega ,v\rangle=0\}$ denotes the orthogonal
 complement of the vector space spanned by $v$. We equip the
 projective space $\mathbb{P}(\mathbb{K}^d)$ with the metric
 \[
   \dpr([u],[v]) = \sqrt{1-\frac{|\langle
       u,v\rangle|^2}{||u||^2||v||^2}}.
 \]
 For any $u, v \in \K^d$, a straightforward computation gives us the
 following formula for $\dpr([u], [v])$ in terms of the canonical
 basis $(e_1, \ldots, e_d)$:
 \begin{align}
   \label{eq:metric_formula}
   \dpr([u], [v])^2 = \frac{\sum_{1 \le i < j \le k} |\langle u, e_i
     \rangle \langle v, e_j \rangle - \langle u, e_j \rangle \langle
     v, e_i \rangle|^2}{\left(\sum_{i=1}^d |\langle u, e_i \rangle|^2
     \right) \left(\sum_{i=1}^d |\langle v, e_i \rangle|^2\right)}.
 \end{align}
 We equip the Grassmannian $\Gr_{d-1}(\K^d)$ with the metric $\dgr$
 defined by viewing elements of $\Gr_{d-1}(\K^d)$ as subsets of
 $\P(\K^d)$, and taking Hausdorff distance with respect to
 $\dpr$. Equivalently, if $u^\perp$ and $v^\perp$ are hyperplanes in
 $\Gr_{d-1}(\K^d)$ for $u, v \in \Sph(\K^d)$, then
 \[
   \dgr(u^{\perp},v^{\perp}) =d_{\mathbb{P}}([u],[v]).
 \]
 For a point $x\in \mathbb{P}(\mathbb{K}^d)$ (resp.
 $V\in \mathsf{Gr}_{d-1}(\mathbb{K}^d)$) and $\varepsilon>0$, let
 $B_{\eps}(x)$ and $B_\eps(V)$ respectively denote the open balls of
 radius $\eps$ about $x$ and $V$, with respect to the metrics $\dpr$
 and $\dgr$.

 If $x$ is an element of $\Gr_k(\K^d)$, and $y$ is an element of $\Gr_j(\K^d)$, with $j + k \le d$, then we let $\dist{x,y}$ denote the minimum distance between $x$ and $y$ when both are viewed as subsets of projective space $\P(\K^d)$. In the special case where $X$ is a singleton $\{[u]\}$ for
 $u \in \Sph(\K^d)$, and $Y$ is a $k$-dimensional projective subspace
 $\P(V)$, then $\dist{[u], \P(V)}$ is the length of the projection of
 $u$ onto $V^\perp$. In particular, if $u = k_1e_1$ and
 $V = k_2e_1^\perp$ for $k_1, k_2 \in \ms{K}_d$, then
 \begin{align}
   \label{eq:dist_to_hyperplane}
   \dist{[u], \P(V)} = |\langle k_1e_1, k_2e_1 \rangle|.
 \end{align}
 For $[u],[v]\in \mathbb{P}(\mathbb{K}^d)$ and
 $V,W\in \mathsf{Gr}_{d-1}(\mathbb{K}^d)$, we shall use repeatedly the
 following estimates from the triangle inequality:
 \begin{align*}
   \big| \dist{[u], \mathbb{P}(V)} - \dist{[v], \mathbb{P}(V)}\big|
   &\leq d_{\mathbb{P}}\left([u],[v]\right)\\
   \big| \dist{[u], \mathbb{P}(V)}-\dist{[v], \mathbb{P}(W)}\big|
   &\leq d_{\mathsf{Gr}}\left(V,W\right).
 \end{align*}

 \subsection{Singular values and the Cartan projection}

 For a matrix $g \in \mathsf{GL}_d(\mathbb{K})$ let
 $\sigma_{1}(g) \geqslant \sigma_{2}(g) \geqslant \ldots \geqslant
 \sigma_{d}(g)$ (resp.
 $\ell_{1}(g) \geqslant \ell_{2}(g) \geqslant \ldots \geqslant
 \ell_{d}(g)$) be the singular values (resp. moduli of eigenvalues) of
 $g$ in non-increasing order. Recall that for each
 $i\in \{1,\ldots, d-1\}$, $\sigma_{i}(g)=\sqrt{\ell_{i}(gg^{\ast})}$,
 where $g^{\ast}=\overline{g}^t$ is the conjugate transpose of $g$.

 Recall that $\GL_d(\K)$ has a \emph{Cartan} or \emph{KAK
   decomposition}
\[
\mathsf{GL}_d(\mathbb{K})=\mathsf{K}_d\exp(\mathfrak{a}^{+})\mathsf{K}_d,
\]
where
$\mathfrak{a}^{+}=\big\{\textup{diag}(a_1,\ldots,a_d):a_1\geq a_2\geq
\cdots \geq a_d\big\}$. This means that for any $g \in \GL_d(\K)$,
there exist $k_g, k_g' \in \ms{K}_d$ so that
\[
  g = k_g\exp(\mu(g))k_g',
\]
where $\mu:\GL_d(\K) \to \Real^d$ is the \emph{Cartan projection}
defined by
\[
  \mu(g) := (\log \sigma_1(g), \ldots, \log \sigma_d(g)).
\]

Given $i\in \{1,\ldots, d-1\}$, we say that a matrix
$g\in \mathsf{GL}_d(\mathbb{K})$ has a {\em gap of index $i$}, if
$\sigma_i(g)>\sigma_{i+1}(g)$. In this case, if we write
$g=k_g\textup{diag}(\sigma_1(g),\ldots,\sigma_d(g))k_g'$ for
$k_g,k_g'\in \mathsf{K}_d$, then the subspace
\[
\Xi_i(g):=k_g\langle e_1,\ldots, e_i\rangle
\]
is well-defined and does not depend on the choice of
$k_g, k_g' \in \ms{K}_d$.

A semigroup $\Gamma\subset \mathsf{GL}_d(\mathbb{K})$ is {\em
  i-divergent} if for every infinite sequence of distinct elements
$(\gamma_n)_{n\in \mathbb{N}}\subset \Gamma$ we have
\[
\lim_{n\rightarrow \infty}\frac{\sigma_i}{\sigma_{i+1}}(\gamma_n)=+\infty.
\]
Note that if the semigroup $\Gamma$ is $i$-divergent, the inverse
semigroup $\Gamma^{-1}$ is $(d-i)$-divergent. For an $i$-divergent
semigroup $\Gamma\subset \mathsf{GL}_d(\mathbb{K})$ denote by
$\Lambda_i(\Gamma)$ its {\em limit set} in
$\mathsf{Gr}_i(\mathbb{K}^d)$, i.e. the set of all accumulation points
of the sequence $(\Xi_i(\gamma_n))_{n\in \mathbb{N}}$ as
$(\gamma_n)_{n\in \mathbb{N}}\subset \Gamma$ runs over all infinite
sequences of distinct elements in $\Gamma$. For more details on
definitions we refer to \cite{GGKW, KLP2}.

\subsection{Initial estimates}

We now prove some basic lemmas that we will use throughout the proofs
of our main results. Some (but not all) of these lemmas are almost
identical to results appearing in the appendix of \cite{BPS}. However,
in our case, we work in $\K^d$, rather than $\Real^d$, and we do not
need to assume that the matrices we consider have any gap between
their singular values. For the reader's convenience we provide short
proofs for all of our estimates.

The first two lemmas we need are contraction results, relating
singular value gaps of elements in $\GL_d(\K)$ to radii of nested
balls in $\P(\K^d)$. \Cref{Lipschitz} shows that, any element
$g \in \GL_d(\K)$ always maps some ball $B_1$ in projective space
$\P(\K^d)$ into some other ball $B_2$, with the ratio of the radii of
$B_1$ and $B_2$ depending on the singular value gap
$\frac{\sigma_1}{\sigma_2}(g)$ and the distance from $B_1$ to the
space $\Xi_{d-1}(g^{-1})$. \Cref{gap-estimate1} gives a converse
result: if we know that some element $g \in \GL_d(\K)$ maps some ball
$B_1$ in $\P(\K^d)$ into some other ball $B_2$, then we have an
estimate on $\frac{\sigma_1}{\sigma_2}(g)$ in terms of the radii of
$B_1$ and $B_2$, and the distance from $B_1$ to $\Xi_{d-1}(g^{-1})$.
\begin{lemma}\label{Lipschitz}
  Let $g\in \mathsf{GL}_d(\mathbb{K})$ and write
  $g:=k_g\exp(\mu(g))k_g'$ for $k_g,k_g'\in \mathsf{K}_d$. Fix
  $0<\varepsilon<1$ and $x\in \mathbb{P}(\mathbb{K}^d)$ with
  $\dist{x,\mathbb{P}((k_g')^{-1}e_1^{\perp})}\geq
  \varepsilon.$ Then for every $0<\delta \leq \frac{1}{2} \varepsilon$
  we have that 
\[
 gB_{\delta}(x) \subset B_{\delta'}(gx), \ 
\]
 where
  $\delta':=\frac{2\delta}{\varepsilon}\frac{\sigma_2}{\sigma_1}(g)$.
\end{lemma}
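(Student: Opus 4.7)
The plan is to use the KAK decomposition $g=k_g\exp(\mu(g))k_g'$ to reduce the estimate to the case where $g$ is a diagonal matrix, and then bound $d_{\mathbb P}(gx,gy)$ directly using the explicit metric formula \eqref{eq:metric_formula}. Since $k_g,k_g'\in \mathsf{K}_d$ act as isometries on $(\mathbb{P}(\mathbb{K}^d),d_{\mathbb P})$, for any $y\in B_{\delta}(x)$ I can fix unit vectors $u,v\in \mathbb{K}^d$ with $k_g'x=[u]$ and $k_g'y=[v]$, so that the goal becomes the bound $d_{\mathbb P}([au],[av])\leq \delta'$, where $a=\exp(\mu(g))=\mathrm{diag}(\sigma_1(g),\ldots,\sigma_d(g))$. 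The isometry $k_g'$ sends $\mathbb{P}((k_g')^{-1}e_1^{\perp})$ to $\mathbb{P}(e_1^{\perp})$, so by \eqref{eq:dist_to_hyperplane} the hypotheses translate to $|\langle u,e_1\rangle|\geq \varepsilon$ and $d_{\mathbb P}([u],[v])\leq \delta$.

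First I would transfer the distance-from-hyperplane lower bound from $[u]$ to $[v]$ using the first of the two triangle inequalities displayed in Section~2, yielding $|\langle v,e_1\rangle|\geq \varepsilon-\delta\geq \varepsilon/2$, where the hypothesis $\delta\leq \varepsilon/2$ is exactly what is needed. Then I would apply \eqref{eq:metric_formula} to $[au]$ and $[av]$: since $\langle au,e_i\rangle=\sigma_i(g)\langle u,e_i\rangle$ (and similarly for $v$), the numerator of the squared distance picks up weights $\sigma_i(g)^2\sigma_j(g)^2$ for $i<j$, each bounded above by $\sigma_1(g)^2\sigma_2(g)^2$, while the residual sum reduces, via \eqref{eq:metric_formula} applied to $[u]$ and $[v]$, to $d_{\mathbb P}([u],[v])^2\leq \delta^2$. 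For the denominator, the lower bounds on $|\langle u,e_1\rangle|$ and $|\langle v,e_1\rangle|$ give $\|au\|^2\|av\|^2\geq \sigma_1(g)^4\varepsilon^4/4$. Taking square roots produces $d_{\mathbb P}([au],[av])\leq \frac{2\delta}{\varepsilon}\cdot\frac{\sigma_2}{\sigma_1}(g)=\delta'$, as required.

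I do not anticipate a serious obstacle: the proof is essentially a bookkeeping exercise built around one application of the metric formula and one application of a triangle inequality. The single substantive point is that the singular value ratio $\sigma_2/\sigma_1$ arises precisely because every ordered pair $(i,j)$ with $i<j$ satisfies $j\geq 2$, and the hypothesis $\delta\leq \varepsilon/2$ is what keeps $[v]$ uniformly far from the repelling hyperplane $(k_g')^{-1}e_1^{\perp}$, so that both $\|au\|$ and $\|av\|$ remain comparable to $\sigma_1(g)$.
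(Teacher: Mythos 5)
Your approach is essentially identical to the paper's: reduce to the diagonal case via the KAK decomposition and the fact that $\mathsf{K}_d$ acts by $d_{\mathbb P}$-isometries, transfer the distance-to-hyperplane bound from $[u]$ to $[v]$ by the triangle inequality, bound each weight $\sigma_i(g)^2\sigma_j(g)^2$ in the numerator of \eqref{eq:metric_formula} by $\sigma_1(g)^2\sigma_2(g)^2$, and bound the denominator from below by keeping only the $i=1$ term of each norm.

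There is, however, an arithmetic slip in your final step that you should notice. You correctly establish the numerator bound $\sigma_1(g)^2\sigma_2(g)^2\,d_{\mathbb P}([u],[v])^2 \le \sigma_1(g)^2\sigma_2(g)^2\delta^2$ and the denominator bound $\|au\|^2\|av\|^2 \ge \sigma_1(g)^4\,\varepsilon^4/4$, but the quotient of these is $\tfrac{4\sigma_2(g)^2\delta^2}{\sigma_1(g)^2\varepsilon^4}$, whose square root is $\tfrac{2\delta}{\varepsilon^2}\tfrac{\sigma_2}{\sigma_1}(g)$, \emph{not} $\tfrac{2\delta}{\varepsilon}\tfrac{\sigma_2}{\sigma_1}(g)$. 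You have dropped a factor of $\varepsilon$. In fact the published proof contains the same discrepancy, caused by a typo: in the line invoking \eqref{eq:dist_to_hyperplane}, the paper writes $\dist{y,\mathbb P((k_g')^{-1}e_1^{\perp})}=|\langle k_g'u_y,e_1\rangle|^2$, where the formula \eqref{eq:dist_to_hyperplane} actually gives the first power, not the square. One can check that the extra $\varepsilon$-factor is genuinely needed (a direct computation in $d=3$ with $u=(\varepsilon,\sqrt{1-\varepsilon^2},0)$ and $v$ obtained by rotating toward $e_1$ saturates the $\delta/\varepsilon^2$ rate), so the correct conclusion from this argument is $\delta'=\tfrac{2\delta}{\varepsilon^2}\tfrac{\sigma_2}{\sigma_1}(g)$. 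This does not affect any downstream applications in the paper, which only ever use that $\delta'$ is a multiple of $\tfrac{\sigma_2}{\sigma_1}(g)$ by a constant depending on $\varepsilon$ and $\theta$, but you should not claim that the bound you derived implies the bound you stated.
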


\begin{proof} First observe that since
  $\dist{x, \mathbb{P}((k_g')^{-1}e_1^{\perp})} \geq \varepsilon$, for
  every $y\in B_{\delta}(x)$ we have that
  \[
    \dist{y, \mathbb{P}((k_g')^{-1}e_1^{\perp})} \geq
    \varepsilon-d_{\mathbb{P}}(x,y)\geq \frac{\varepsilon}{2}.
  \]
  Write $x=[u_x], y=[u_y]$ for
  $u_x,u_y\in \mathsf{S}^1(\mathbb{K}^d)$. We can use
  \eqref{eq:metric_formula} to see that
  \begin{align*}
    d_{\mathbb{P}}\left(gx,gy\right)^2
    &= \frac{\sum_{1\leq i<j\leq d}\sigma_i(g)^2\sigma_j(g)^2
      \big|\langle k_g'u_x,e_i\rangle \langle
      k_g'u_y,e_j\rangle-\langle k_g' u_x,e_j\rangle \langle k_g'
      u_y,e_i\rangle \big|^2}{\left(\sum_{i=1}^{d}\sigma_i(g)^2
      \big|\langle k_g' u_x,e_i\rangle\big|^2\right)
      \left(\sum_{i=1}^{d}\sigma_i(g)^2 \big|\langle k_g'
      u_y,e_i\rangle\big|^2\right)}\\&\leq
    \frac{\sigma_1(g)^2\sigma_2(g)^2}{\sigma_1(g)^4}\frac{d_{\mathbb{P}}(x,y)^2}{\big|\langle
    k_g' u_x,e_1\rangle \big|^2 \big|\langle k_g' u_y,e_1\rangle
    \big|^2}.
  \end{align*}
  Using \eqref{eq:dist_to_hyperplane}, we know that
  $\varepsilon/2 \leq \dist{y, \P((k_g')^{-1}e_1^\perp)} =
  \left|\langle k_g'u_y, e_1 \rangle \right|^2$,
  so we can bound the above beneath
  \begin{align*}
    \frac{4\sigma_2(g)^2}{\sigma_1(g)^2}\frac{1}{\varepsilon^{2}}d_{\mathbb{P}}(x,y)^2\leq
    \frac{4\sigma_2(g)^2}{\sigma_1(g)^2}\frac{\delta^2}{\varepsilon^{2}}.
  \end{align*}
  The lemma follows.
\end{proof}

\begin{lemma}\label{gap-estimate1} Let $g\in
  \mathsf{GL}_d(\mathbb{K})$ written as $g=k_g\exp(\mu(g))k_g'$ for
  $k_g,k_g'\in \mathsf{K}_d$, and let $x \in \P(\K^d)$. Suppose that
  $0<\delta, r<1$ satisfy $gB_{\delta}(x)\subset B_{r}(gx)$. Then the
  following estimate holds:
  \[
    \frac{\sigma_1}{\sigma_2}(g)\geq \frac{\delta}{4r}
    \dist{x,\mathbb{P}((k_g')^{-1}e_1^{\perp})}
  \]
\end{lemma}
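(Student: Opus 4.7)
The plan is to prove a converse to \Cref{Lipschitz} by exhibiting one specific point $y \in B_\delta(x)$ whose image $gy$ is forced to be far from $gx$, quantitatively in terms of $\sigma_2/\sigma_1(g)$ and $\varepsilon := \dist{x, \mathbb{P}((k_g')^{-1}e_1^{\perp})}$. The hypothesis $gB_\delta(x) \subset B_r(gx)$ then forces $\sigma_1/\sigma_2(g)$ to be correspondingly large.

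Write $x = [u_x]$ with $u_x \in \Sph(\K^d)$, set $u_x' := k_g' u_x$, and let $a_i := \langle u_x', e_i\rangle$. By \eqref{eq:dist_to_hyperplane}, $|a_1| = \varepsilon$, and we may assume $\varepsilon > 0$ (otherwise the bound is vacuous). Non-parallelism of $u_x'$ and $e_2$ then follows, so the unit vector $v := (e_2 - \bar{a}_2\, u_x')/\sqrt{1 - |a_2|^2}$ is well-defined and orthogonal to $u_x'$. Pick $\theta$ with $\sin\theta = \delta/2$, and set $u_y := (k_g')^{-1}(\cos\theta\, u_x' + \sin\theta\, v)$; then $y := [u_y]$ satisfies $\dpr(x,y) = \sin\theta = \delta/2 < \delta$, so $y \in B_\delta(x)$.

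The key calculation is that the $(1,2)$-minor $M_{12} := a_1 b_2 - a_2 b_1$, with $b_i := \langle k_g' u_y, e_i\rangle$, simplifies after direct expansion and cancellation to $M_{12} = a_1 \sin\theta/\sqrt{1 - |a_2|^2}$, giving $|M_{12}| \geq \varepsilon \sin\theta = \varepsilon\delta/2$. Applying \eqref{eq:metric_formula} to $\dpr(gx, gy)^2$ and retaining only the $(i,j) = (1,2)$ summand in the numerator (all other summands being nonnegative), together with the upper bound $\|gu_x\|, \|gu_y\| \leq \sigma_1(g)$, yields
\[
\dpr(gx, gy) \;\geq\; \frac{\sigma_2(g)}{\sigma_1(g)}\, |M_{12}| \;\geq\; \frac{\sigma_2(g)}{\sigma_1(g)} \cdot \frac{\varepsilon\delta}{2}.
\]

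The assumption $gy \in B_r(gx)$ now gives $\dpr(gx, gy) < r$; rearranging produces $\frac{\sigma_1}{\sigma_2}(g) \geq \frac{\delta\varepsilon}{2r}$, in fact stronger than the claimed bound with factor $\tfrac{1}{4}$. The main obstacle is really just the bookkeeping required to verify the minor identity cleanly, plus the mild degeneracy check that $v$ is well-defined, which is handled by the reduction $\varepsilon > 0$ at the start.
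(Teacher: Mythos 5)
Your proof is correct and follows essentially the same approach as the paper: exhibit one explicit perturbation $y$ of $x$ and bound $\dpr(gx, gy)$ from below via the $(1,2)$-minor in \eqref{eq:metric_formula}. The only difference is that you first orthogonalize the perturbation direction against $u_x'$, which makes $\dpr(x,y)=\sin\theta$ exact and cleans up the minor identity, yielding the sharper constant $\frac{\delta\varepsilon}{2r}$ instead of the paper's $\frac{\delta\varepsilon}{4r}$.
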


\begin{proof}
  After conjugation by $k_g'$, we may assume without loss of
  generality that $k_g' = \identity$.  Fix a unit vector
  $u_x\in \mathsf{S}^1(\mathbb{K}^d)$ with $x=[u_x]$.  If
  $\dist{x, \mathbb{P}(e_1^{\perp})}=\big|\langle
  u_x,e_1\rangle\big|=0$, there is nothing to prove. So we may assume
  that $\big|\langle u_x,e_1\rangle\big|>0$ and
  $\big|\langle u_x,e_2\rangle \big|<1$.

  For each $t \in (0, 1)$, consider the unit vector
  \[
    v(t) := \frac{u_x+t e_2}{\big|\big|u_x+t e_2\big|\big|}.
  \]
  A computation shows that
  \begin{align*}
    d_{\mathbb{P}}\left([u_x],[v(t)]\right)
    &=\sqrt{1-\big|\langle u_x, v(t)\rangle
      \big|^2}=\frac{t\sqrt{1-\big|\langle u_x, e_2\rangle\big|^2}}{\big|\big|u_x+te_2\big|\big|}\\
    &\leq \frac{t}{1-t}\sqrt{1-\big|\langle u_x,e_2\rangle
      \big|^2}.
  \end{align*}
  In particular, we have $[v(t)] \in B_{\delta}(x)$ if $t$ satisfies
  \[
    t = \frac{\delta}{\delta+\sqrt{1-\big|\langle u_x,e_2\rangle|^2}}.
  \]
  So, now set $u_y = v(t)$ for some $t$ as above. Suppose that
  $gB_\delta(x) \subset B_r(x)$, so that
  $d_{\mathbb{P}}\left([gu_x],[gu_y]\right) < r$. Using the formula
  \eqref{eq:metric_formula}, we have the estimates
  \begin{align*}
    d_{\mathbb{P}}\left([gu_x],[gu_y]\right)^2
    &=\frac{\sum_{1\leq i<j\leq d}\sigma_i(g)^2\sigma_j(g)^2
      \big|\langle u_x,e_i\rangle \langle u_y,e_j\rangle-\langle
      u_x,e_j\rangle \langle u_y,e_i\rangle
      \big|^2}{\left(\sum_{i=1}^{d}\sigma_i(g)^2 \big|\langle
      u_x,e_i\rangle\big|^2\right) \left(\sum_{i=1}^{d}\sigma_i(g)^2
      \big|\langle u_y,e_i\rangle\big|^2\right)}\\
    &\geq \frac{\sigma_2(g)^2}{\sigma_1(g)^2}\big|\langle
      u_x,e_1\rangle \langle u_y,e_2\rangle-\langle
      u_x,e_2\rangle \langle u_y,e_1\rangle \big|^2\\
    &=  \frac{\sigma_2(g)^2}{\sigma_1(g)^2}\frac{t^2 \big|\langle
      u_x,e_1\rangle\big|^2}{\big|\big|u_x+t e_2\big|\big|^2}\\
    &\geq
      \frac{\sigma_2(g)^2}{\sigma_1(g)^2}\frac{\delta^2}{16}\dist{x,\mathbb{P}(e_1^{\perp})}^2.
  \end{align*} 
  We deduce that
  \[
    \frac{\sigma_1}{\sigma_2}(g)\geq
    \frac{\delta}{4r}\dist{x, \mathbb{P}(e_1^{\perp})},
  \]
  which finishes the proof of the lemma.
\end{proof}

The next lemma also captures the idea that a group element
$g \in \GL_d(\K)$ attracts a point $x \in \P(\K^d)$ towards the
subspace $\Xi_1(g)$, by an amount related to the singular value gap
$\frac{\sigma_1}{\sigma_2}(g)$ and the distance from $x$ to
$\Xi_{d-1}(g^{-1})$.

\begin{lemma}[{Compare \cite[Lem. A.6]{BPS}}]
  \label{lem:BPS-estimate-1}
  Let $g \in \GL_d(\K)$, and write $g_1 = k_g\exp(\mu(g))k_g'$. For any $x \in \P(\K^d) \minus \P((k_g')^{-1}e_1^\perp)$ we have the estimate
  \[
    d_{\mathbb{P}}\left(gx, [k_ge_1]\right) \le \frac{\sigma_2}{\sigma_1}(g) \cdot
    \frac{1}{\dist{x, \P((k_g')^{-1}e_1^\perp)}}.
  \]
\end{lemma}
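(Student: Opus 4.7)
The strategy is to exploit the fact that $k_g$ and $k_g'$ act as isometries on $\mathbb{P}(\mathbb{K}^d)$, reducing the problem to the case where $g$ is the diagonal matrix $\exp(\mu(g))=\mathrm{diag}(\sigma_1(g),\ldots,\sigma_d(g))$ and the attracting point $[k_ge_1]$ becomes $[e_1]$. After this reduction, the statement is essentially a direct computation using the formula for $d_{\mathbb{P}}$ in terms of the inner product.

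First I would write $x=[u]$ for a unit vector $u\in\mathsf{S}^1(\mathbb{K}^d)$ and set $u'=k_g'u$. Since $k_g'\in\mathsf{K}_d$ is an isometry of the inner product, $u'$ is again a unit vector, and by the definition of $\dist{\cdot,\cdot}$ together with \eqref{eq:dist_to_hyperplane} one has
\[
\dist{x,\mathbb{P}((k_g')^{-1}e_1^\perp)}=|\langle u',e_1\rangle|.
\]
Let $a_i=\langle u',e_i\rangle$, so $\sum_i|a_i|^2=1$. Because $k_g$ is an isometry of $d_{\mathbb{P}}$, applying $k_g$ to both points reduces us to estimating $d_{\mathbb{P}}(\exp(\mu(g))u',e_1)$.

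The image vector is $\exp(\mu(g))u'=\sum_i\sigma_i(g)a_ie_i$, so by the definition of the metric $d_{\mathbb{P}}$,
\[
d_{\mathbb{P}}\bigl(\exp(\mu(g))u',e_1\bigr)^2
=1-\frac{\sigma_1(g)^2|a_1|^2}{\sum_{i=1}^d\sigma_i(g)^2|a_i|^2}
=\frac{\sum_{i\ge 2}\sigma_i(g)^2|a_i|^2}{\sum_{i=1}^d\sigma_i(g)^2|a_i|^2}.
\]
The numerator is bounded above by $\sigma_2(g)^2\sum_{i\ge 2}|a_i|^2\le\sigma_2(g)^2$, and the denominator is bounded below by $\sigma_1(g)^2|a_1|^2$. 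Therefore
\[
d_{\mathbb{P}}\bigl(gx,[k_ge_1]\bigr)^2\le \frac{\sigma_2(g)^2}{\sigma_1(g)^2}\cdot\frac{1}{|a_1|^2},
\]
and taking square roots gives the claimed inequality.

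There is no real obstacle here: once one recognises that $k_g,k_g'$ act isometrically on projective space, the entire lemma reduces to a one-line calculation with the diagonal matrix $\exp(\mu(g))$. The only mild care needed is the observation that $\dist{x,\mathbb{P}((k_g')^{-1}e_1^\perp)}$ equals $|\langle k_g'u,e_1\rangle|$, which is precisely the $e_1$-component of $u'$ appearing in the denominator of the estimate above.
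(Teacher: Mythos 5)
Your proof is correct and follows essentially the same approach as the paper: reduce to the diagonal case by observing that $k_g$ acts isometrically on $\P(\K^d)$, then directly compute $d_{\mathbb{P}}(\exp(\mu(g))k_g'u,\,e_1)^2$ from the definition of the metric and bound numerator and denominator separately. The only difference is cosmetic notation; in fact your write-up is cleaner (the paper's displayed computation omits a couple of squares on the left-hand side, which you have correctly included).
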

\begin{proof}
  Let $v\in \K^d$ be a unit vector with $x=[v]$. By the
  definition of the metric $\dpr$ we have that
  \begin{align*}
    \dpr(gx, [k_ge_1]) &= 1-\frac{\left|\langle \exp(\mu(g))k_g'v, e_1\rangle\right|^2}
      {||\exp(\mu(g))k_g'v||} =
      \frac{\sum_{i=2}^d\sigma_i(g)^2|k_g'v,e_i|^2}{\sum_{i=1}^d\sigma_i(g)^2|\langle
      k_gv,e_i\rangle|^2}\\
    &\leq \frac{\sigma_2(g)^2}{\sigma_1(g)^2}
      \frac{1}{|\langle k_g'v,e_1\rangle|^2} =
      \frac{\sigma_2(g)^2}{\sigma_1(g)^2}\dist{x,\P((k_g')^{-1}e_1^{\perp})}^{-2}.
   \end{align*}
 \end{proof}

 The next lemma allows us to relate the singular value gaps of a
 \emph{product} $g_1g_2$ of elements in $\GL_d(\K)$ to the singular
 values of $g_1$ and $g_2$.

 \begin{lemma}
    \label{gap-product-1} Let $w_1,w_2\in
    \mathsf{GL}_d(\mathbb{K})$. Then for $i\in \{1,2\}$ we have the
    estimate
    \[
      \frac{\sigma_1}{\sigma_2}(w_1w_2)\geq
      \frac{\sigma_d(w_i)^2}{\sigma_1(w_i)^2}\frac{\sigma_1}{\sigma_2}(w_1)\frac{\sigma_1}{\sigma_2}(w_2).
    \]
\end{lemma}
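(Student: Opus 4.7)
The plan is to convert the ratio $\sigma_1/\sigma_2$ of a matrix into a ratio of operator norms of the matrix and of its induced action on the second exterior power, and then to apply sub-multiplicativity of the operator norm together with an elementary lower bound on $\|AB\|$ coming from the minimal singular value of $A$ or $B$. This reduces the lemma to a short algebraic rearrangement.

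More concretely, I would first recall the standard identifications $\sigma_1(g)=\|g\|_{\mathrm{op}}$ and $\sigma_1(g)\sigma_2(g)=\|\Lambda^2 g\|_{\mathrm{op}}$, so that
\[
  \frac{\sigma_1}{\sigma_2}(g)=\frac{\|g\|^2}{\|\Lambda^2 g\|}.
\]
I would also note two inequalities: first, the sub-multiplicativity
\[
  \|\Lambda^2(w_1 w_2)\|=\|\Lambda^2 w_1\cdot\Lambda^2 w_2\|\le \|\Lambda^2 w_1\|\,\|\Lambda^2 w_2\|=\sigma_1(w_1)\sigma_2(w_1)\sigma_1(w_2)\sigma_2(w_2);
\]
and second, the lower bound $\|w_1 w_2\|\ge \sigma_d(w_i)\|w_{3-i}\|$ for each $i\in\{1,2\}$, which follows from writing $w_{3-i}=w_i^{-1}(w_1 w_2)$ (or $(w_1 w_2)w_i^{-1}$) and using $\|w_i^{-1}\|=\sigma_d(w_i)^{-1}$.

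Putting these pieces together, for $i=2$ I would estimate
\[
  \frac{\sigma_1}{\sigma_2}(w_1 w_2)=\frac{\|w_1 w_2\|^2}{\|\Lambda^2(w_1 w_2)\|}\ge \frac{\sigma_d(w_2)^2\,\sigma_1(w_1)^2}{\sigma_1(w_1)\sigma_2(w_1)\sigma_1(w_2)\sigma_2(w_2)}=\frac{\sigma_d(w_2)^2}{\sigma_1(w_2)^2}\,\frac{\sigma_1}{\sigma_2}(w_1)\,\frac{\sigma_1}{\sigma_2}(w_2),
\]
and the case $i=1$ follows symmetrically using $\|w_1 w_2\|\ge \sigma_d(w_1)\|w_2\|$. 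There is no real obstacle here; the only thing to be careful about is the choice of which factor to ``peel off'' as $w_i^{-1}$, which determines whether $\sigma_d(w_1)$ or $\sigma_d(w_2)$ appears in the numerator. If the paper prefers to avoid $\Lambda^2$, the same bound can instead be obtained from Horn's singular-value inequalities $\sigma_2(AB)\le \min(\sigma_1(A)\sigma_2(B),\sigma_2(A)\sigma_1(B))$ and $\sigma_1(AB)\ge \max(\sigma_d(A)\sigma_1(B),\sigma_1(A)\sigma_d(B))$, applied in exactly the same pattern.
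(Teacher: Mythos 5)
Your proof is correct and follows essentially the same route as the paper's: both reduce $\frac{\sigma_1}{\sigma_2}(g)$ to $\frac{\sigma_1(g)^2}{\sigma_1(\wedge^2 g)}$, then bound the numerator from below by $\sigma_d(w_i)^2\sigma_1(w_{3-i})^2$ and the denominator from above by submultiplicativity of $\sigma_1(\wedge^2\cdot)$. The paper's write-up multiplies the numerator and denominator through by a factor of $\sigma_1(w_i)$ as a cosmetic intermediate step, but the two arguments are otherwise identical.
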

\begin{proof} Since $\sigma_1(\wedge^2g) = \sigma_1(g)\sigma_2(g)$ for
  all $g \in \GL_d(\K)$, we know that
  \[
    \frac{\sigma_1(w_1w_2)}{\sigma_2(w_1w_2)}=\frac{\sigma_1^2(w_1w_2)}{\sigma_1(\wedge^2
      w_1w_2)}.
  \]
  For $i = 1,2$, we know that
  $\sigma_1^2(w_1w_2) \ge \sigma_1(w_{3-i})^2\sigma_d(w_i)^2$, meaning
  we have
  \[
    \frac{\sigma_1^2(w_1w_2)}{\sigma_1(\wedge^2 w_1w_2)} \ge
    \frac{\sigma_1(w_i)\sigma_1(w_{3-i})^2\sigma_d(w_i)^2}{\sigma_1(w_i)\sigma_1(\wedge^2w_1)\sigma_1(\wedge^2w_2)}=\frac{\sigma_d(w_i)^2}{\sigma_1(w_i)^2}\frac{\sigma_1(w_1)\sigma_1(w_2)}{\sigma_2(w_1)\sigma_2(w_2)}.
  \]
\end{proof}

The next two lemmas also give us an estimate on the singular value
gaps of a product $g_1g_2$, but this time our estimates also involve
the relative positions of certain subspaces $\Xi_k(g_1)$,
$\Xi_k(g_2)$, and $\Xi_k(g_1g_2)$.
\begin{lemma}[{Compare \cite[Lem. A.7]{BPS}}]
  \label{ratio-sigma1}
  Let $g_1,g_2\in \GL_d(\mathbb{K})$ and for $i \in \{1,2\}$ write
  $g_i=k_{g_i}\exp(\mu(g_i))k_{g_i}'$, with
  $k_{g_i},k_{g_i}'\in \mathsf{K}_d$. Then the following estimate
  holds:
  \[
    \frac{\sigma_1(g_1g_2)}{\sigma_1(g_1)\sigma_1(g_2)}\geq
    \dist{[k_{g_2}e_1],\mathbb{P}((k_{g_1})^{-1}e_{1}^{\perp}))}.
  \]
\end{lemma}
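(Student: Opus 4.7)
The plan is to produce the estimate by exhibiting an explicit unit vector $v \in \K^d$ for which $\|g_1g_2 v\|$ can be bounded from below by the claimed quantity; since $\sigma_1(g_1g_2) \geq \|g_1g_2 v\|/\|v\|$, this will suffice. The natural choice is the singular direction of $g_2$ corresponding to the top singular value, namely $v := (k_{g_2}')^{-1}e_1$. This is indeed a unit vector, and since $k_{g_2}' (k_{g_2}')^{-1} e_1 = e_1$, a direct computation gives
\[
g_2 v = k_{g_2}\exp(\mu(g_2))\, e_1 = \sigma_1(g_2)\, k_{g_2} e_1,
\]
so the image of $v$ under $g_2$ lies in the distinguished direction $[k_{g_2}e_1]$.

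Next, I would apply $g_1 = k_{g_1}\exp(\mu(g_1))k_{g_1}'$ to $g_2 v$. Setting $w := k_{g_1}' k_{g_2} e_1$, the fact that $\K_d$ preserves the norm gives
\[
\|g_1 g_2 v\| = \sigma_1(g_2)\,\bigl\|\exp(\mu(g_1))\, w\bigr\|.
\]
Expanding the right-hand norm in the standard basis yields
\[
\bigl\|\exp(\mu(g_1))\, w\bigr\|^2 = \sum_{i=1}^d \sigma_i(g_1)^2 |\langle w, e_i\rangle|^2 \geq \sigma_1(g_1)^2 |\langle w, e_1\rangle|^2,
\]
so that $\|g_1 g_2 v\| \geq \sigma_1(g_1)\sigma_1(g_2)\cdot |\langle k_{g_1}' k_{g_2} e_1, e_1\rangle|$.

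It remains to identify $|\langle k_{g_1}' k_{g_2} e_1, e_1\rangle|$ geometrically. Using unitarity of $k_{g_1}'$, we can rewrite it as $|\langle k_{g_2} e_1, (k_{g_1}')^{-1} e_1\rangle|$, which by the formula \eqref{eq:dist_to_hyperplane} is exactly $\dist{[k_{g_2}e_1], \P((k_{g_1}')^{-1}e_1^\perp)}$ (so the statement's ``$(k_{g_1})^{-1}e_1^\perp$'' should be understood as $(k_{g_1}')^{-1}e_1^\perp$, i.e.\ the subspace $\Xi_{d-1}(g_1^{-1})$). Combining the inequalities and using $\sigma_1(g_1g_2) \geq \|g_1g_2 v\|$ yields the claim.

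No step constitutes a real obstacle; the proof is a direct computation along the lines of \cite[Lem.~A.7]{BPS}. The only delicate point is the bookkeeping of which $\K_d$-factor of the Cartan decomposition plays which role, so that the inner product simplifies to a distance to the hyperplane $\Xi_{d-1}(g_1^{-1})$ rather than to some other subspace.
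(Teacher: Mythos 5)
Your proof is correct and is essentially the paper's own argument: both reduce the estimate to the observation that applying $g_1g_2$ to the top singular vector $(k_{g_2}')^{-1}e_1$ of $g_2$ yields a vector of norm at least $\sigma_1(g_1)\sigma_1(g_2)\,|\langle k_{g_1}'k_{g_2}e_1,e_1\rangle|$, and then identify the inner product as a distance via \eqref{eq:dist_to_hyperplane}. You are also right that the hyperplane appearing in the bound is $(k_{g_1}')^{-1}e_1^{\perp}=\Xi_{d-1}(g_1^{-1})$; the missing prime in the lemma statement (and the transposed $k_{g_2}'k_{g_1}$ appearing in the paper's last displayed line) are typos, and the correct form matches how \Cref{ratio-sigma1} is used later, e.g.\ in the proof of \Cref{thm:pingpong_word_bound}\ref{item:pingpong_1_bound}.
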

\begin{proof} A computation shows that \begin{align*}
      \frac{\sigma_1(g_1g_2)}{\sigma_1(g_1)\sigma_1(g_2)}&=\Bigg|\Bigg|
      k_{g_1}\textup{diag}\left(1,
        \frac{\sigma_2}{\sigma_1}(g_1),\ldots,
        \frac{\sigma_d}{\sigma_1}(g_1)\right)k_{g_1}'k_{g_2}\textup{diag}\left(1,
        \frac{\sigma_2}{\sigma_1}(g_2),\ldots,
        \frac{\sigma_d}{\sigma_1}(g_2)\right)k_{g_2}'\Bigg|\Bigg|\\
      &=\Bigg|\Bigg|\textup{diag}\left(1,
        \frac{\sigma_2}{\sigma_1}(g_1),\ldots,
        \frac{\sigma_d}{\sigma_1}(g_1)\right)k_{g_1}'k_{g_2}\textup{diag}\left(1,
        \frac{\sigma_2}{\sigma_1}(g_2),\ldots,
        \frac{\sigma_d}{\sigma_1}(g_2)\right)\Bigg|\Bigg|\\ &\geq
      \Bigg|\Bigg| \textup{diag}\left(1,
        \frac{\sigma_2}{\sigma_1}(g_1),\ldots,
        \frac{\sigma_d}{\sigma_1}(g_1)\right)k_{g_1}'k_{g_2}e_1\Bigg|\Bigg|\\
      &\geq \big|\langle k_{g_2}'k_{g_1}e_1,e_1\rangle
      \big|=\dist{[k_{g_2}e_1],
        \mathbb{P}((k_{g_1})^{-1}e_1^{\perp})}.\end{align*}
  \end{proof}

\begin{lemma}[{Compare \cite[Lem. A.9]{BPS}}]
  \label{BPS-estimate-2} Let $g_1,g_2\in
  \mathsf{GL}_d(\mathbb{K})$ and
  write
  \begin{align*}
    g_1g_2&=k_{g_1g_2}\exp\left(\mu(g_1g_2)\right)k_{g_1g_2}', \qquad
            \textrm{ for }k_{g_1g_2},k_{g_1g_2}'\in \mathsf{K}_d,\\
    g_i&=k_{g_i}\exp\left(\mu(g_i)\right)k_{g_i}',\quad
         \textrm{ for }k_{g_i},k_{g_i}'\in \mathsf{K}_d,\ \textrm{ and
         }i=1,2.
  \end{align*}
  Then the following estimates hold:
  \begin{enumerate}[label=(\roman*)]
  \item\label{item:xi1_attract_inequality}
    \[
      \dpr\left([k_{g_1g_2}e_1],[k_{g_1}e_1]\right) \leq
      \sqrt{d-1}\frac{\sigma_2(g_1)\sigma_1(g_2)}{\sigma_1(g_1g_2)}\leq
      \frac{\sigma_2}{\sigma_1}(g) \cdot
      \frac{\sqrt{d-1}}{\dist{[k_{g_2}e_1],\mathbb{P}\left((k_{g_1}')^{-1}e_1^{\perp}\right)}},
    \]
  \item\label{item:xid1_attract_inequality}
    \[
      \dgr\left(k_{g_1g_2}e_{d}^{\perp},k_{g_1}e_d^{\perp}\right) \leq
      \sqrt{d-1}\frac{\sigma_d(g_1g_2)}{\sigma_d(g_2)\sigma_{d-1}(g_1)}\leq
      \frac{\sigma_d}{\sigma_{d-1}}(g_1) \cdot
      \frac{\sqrt{d-1}}{\dist{[(k_{g_1}')^{-1}e_d],\mathbb{P}(k_{g_2}e_{d}^{\perp})}}.
    \]
  \end{enumerate}
\end{lemma}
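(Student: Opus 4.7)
The plan is to prove part~(i) by a direct computation in the $KAK$ decomposition, and then to derive part~(ii) from part~(i) by passing to the $(d-1)$-th exterior power.

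For part~(i), I would first set $v := (k_{g_1g_2}')^{-1}e_1$, so that $v \in \Sph(\K^d)$ and $g_1g_2\,v = \sigma_1(g_1g_2)\,k_{g_1g_2}e_1$. Let $w := g_2 v$, so that $\|w\| \le \sigma_1(g_2)$ and $g_1 w = \sigma_1(g_1g_2)\,k_{g_1g_2}e_1$. Since $k_{g_1}'$ is unitary, $\{(k_{g_1}')^{-1}e_i\}_{i=1}^d$ is an orthonormal basis, so I can expand $w = \sum_i \beta_i (k_{g_1}')^{-1}e_i$; applying $g_1$ gives $g_1 w = \sum_i \beta_i \sigma_i(g_1)\,k_{g_1}e_i$. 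A direct computation using the definition of $d_\P$ then yields
\[
d_\P\bigl([g_1 w],[k_{g_1}e_1]\bigr)^2 \;=\; \frac{\sum_{i\ge 2}|\beta_i|^2 \sigma_i(g_1)^2}{\sigma_1(g_1g_2)^2} \;\le\; \frac{\sigma_2(g_1)^2\,\|w\|^2}{\sigma_1(g_1g_2)^2} \;\le\; \frac{\sigma_2(g_1)^2\sigma_1(g_2)^2}{\sigma_1(g_1g_2)^2},
\]
which gives the first inequality of (i) (the $\sqrt{d-1}$ factor absorbs a harmless overestimate such as $\sum_{i\ge 2}|\beta_i|^2 \le (d-1)\max_i|\beta_i|^2 \le (d-1)\|w\|^2$). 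The second inequality is then immediate by substituting the lower bound $\sigma_1(g_1g_2) \ge \sigma_1(g_1)\sigma_1(g_2)\dist{[k_{g_2}e_1],\mathbb{P}((k_{g_1}')^{-1}e_1^\perp)}$ from \Cref{ratio-sigma1}.

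For part~(ii), I would pass to the $(d-1)$-th exterior powers $\wedge^{d-1}g_1,\wedge^{d-1}g_2 \in \GL(\wedge^{d-1}\K^d)$. Under the Hodge star isomorphism $\wedge^{d-1}\K^d \cong (\K^d)^\ast$, the top singular line $\Xi_1(\wedge^{d-1}g)$ corresponds (up to sign) to the unit normal $[k_g e_d]$ of the hyperplane $\Xi_{d-1}(g) = k_ge_d^\perp$, and projective distance between the normals coincides with the Grassmannian distance between the hyperplanes. The singular values of $\wedge^{d-1}g$ satisfy $\sigma_1(\wedge^{d-1}g) = |\det g|/\sigma_d(g)$ and $\sigma_2(\wedge^{d-1}g) = |\det g|/\sigma_{d-1}(g)$, so using $|\det(g_1g_2)| = |\det g_1|\,|\det g_2|$ the ratio
\[
\frac{\sigma_2(\wedge^{d-1}g_1)\,\sigma_1(\wedge^{d-1}g_2)}{\sigma_1(\wedge^{d-1}(g_1g_2))} \;=\; \frac{\sigma_d(g_1g_2)}{\sigma_{d-1}(g_1)\,\sigma_d(g_2)}
\]
collapses to exactly the quantity appearing in (ii). Applying part~(i) to $\wedge^{d-1}g_1,\wedge^{d-1}g_2$ therefore yields the first inequality of (ii), and applying \Cref{ratio-sigma1} to the same exterior powers yields the second inequality, using the symmetry $\dist{[u],\mathbb{P}(v^\perp)} = \dist{[v],\mathbb{P}(u^\perp)}$ to rewrite the resulting distance as $\dist{[(k_{g_1}')^{-1}e_d],\mathbb{P}(k_{g_2}e_d^\perp)}$.

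The main obstacle will be the bookkeeping around the Hodge identification: one must verify that, after reordering to place the singular values of $\wedge^{d-1}g$ in nonincreasing order, the Cartan $K$-parts of $\wedge^{d-1}g$ correspond under the Hodge isomorphism to $k_g,k_g'$ composed with the index-reversing permutation $e_i \leftrightarrow e_{d+1-i}$ on $\K^d$. Once this compatibility is pinned down, both inequalities of part~(ii) reduce mechanically to the corresponding statements in part~(i) and \Cref{ratio-sigma1}.
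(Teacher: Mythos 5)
Your proof of part~(i) is correct and is essentially the paper's argument: both proofs expand in the Cartan (KAK) decomposition and bound the off-diagonal Fourier coefficients of $k_{g_1}^{-1}k_{g_1g_2}e_1$ (equivalently, your coefficients $\beta_i$). In fact your version is slightly sharper---the chain $\sum_{i\ge2}|\beta_i|^2\sigma_i(g_1)^2\le\sigma_2(g_1)^2\sum_{i\ge2}|\beta_i|^2\le\sigma_2(g_1)^2\|w\|^2$ already gives the bound without any $\sqrt{d-1}$, so your aside about absorbing a $(d-1)$ overestimate is unnecessary (the paper's $\sqrt{d-1}$ comes from bounding each of the $d-1$ terms $|\beta_i|\sigma_i(g_1)$ separately by $\sigma_2(g_1)\sigma_1(g_2)/\sigma_1(g_1g_2)$, which is weaker than your summed estimate). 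The second inequality of~(i) then follows from \Cref{ratio-sigma1} exactly as you say.

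For part~(ii), the paper simply remarks that the proof is ``analogous,'' meaning one reruns the same KAK computation with $e_d$ in place of $e_1$. Your route---reduce (ii) to (i) by passing to $\wedge^{d-1}$ and conjugating through the Hodge star $\wedge^{d-1}\K^d\cong(\K^d)^\ast$---is a valid alternative and is the standard duality trick for trading the bottom of the Cartan projection for the top. It trades a second explicit computation for the bookkeeping you flag: one must check that the decreasingly-ordered Cartan decomposition of $\wedge^{d-1}g$ has $K$-parts equal (projectively, up to a phase and the index-reversal $e_i\leftrightarrow e_{d+1-i}$) to those of $g$, so that $\Xi_1(\wedge^{d-1}g)$ maps to $[k_ge_d]$ and the corresponding repelling hyperplane to $\P((k_g')^{-1}e_d^\perp)$. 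Once that is verified, your identifications of $\sigma_1,\sigma_2$ of the exterior power via $|\det g|/\sigma_d,\ |\det g|/\sigma_{d-1}$ and the symmetry $\dist{[u],\P(v^\perp)}=\dist{[v],\P(u^\perp)}$ do indeed collapse everything to the stated inequalities. Net effect: your approach makes the $\wedge^{d-1}$ duality explicit (which the paper leaves implicit in the word ``analogous''), at the cost of the Hodge compatibility check; the paper's approach avoids that check but repeats the computation. Both are sound.
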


\begin{proof} To prove \ref{item:xi1_attract_inequality}, note that
\[
k_{g_1}^{-1}k_{g_1g_2}\exp(\mu(g_1g_2))=\exp(\mu(g_1))k_{g_1}'k_{g_2}\exp(\mu(g_2))k_{g_2}'(k_{g_1g_2}')^{-1}
\]
 and hence for $2\leq i \leq d$ we have \begin{align*} \sigma_1(g_1g_2)\big|\big\langle k_{g_1}^{-1}k_{g_1g_2}e_1,e_i\big \rangle \big|&=\big|\big\langle k_{g_1}^{-1}k_{g_1g_2}\exp(\mu(g_1g_2))e_1,e_i\big \rangle \big|\\ &=\big|\big\langle \exp(\mu(g_1))k_{g_1}'k_{g_2}\exp(\mu(g_2))k_{g_2}'(k_{g_1g_2}')^{-1}e_1,e_i\big \rangle\big|\\ & =\sigma_i(g_1)\big|\big\langle k_{g_1}'k_{g_2}\exp(\mu(g_2))k_{g_2}'(k_{g_1g_2}')^{-1}e_1,e_i\big \rangle\big|\\ & \leq \sigma_i(g_1)\sigma_1(g_2).  \end{align*} 

  Then as
  \[
    1 = ||k_{g_1}^{-1}k_{g_1g_2}e_1||^2 = \sum_{i=1}^d \left| \langle k_{g_1}^{-1}k_{g_1g_2} e_1, e_i
      \rangle \right|^2
    = \left|\langle k_{g_1}^{-1}k_{g_1g_2}e_1, e_1 \rangle\right|^2 + \sum_{i=2}^d
    \left|\langle k_{g_1}^{-1}k_{g_1g_2} e_1, e_i \rangle\right|^2,
  \] we have that
  $d_{\mathbb{P}}\left([k_{g_1g_2}e_1],[k_{g_1}e_1]\right)\leq
  \sqrt{d-1}\frac{\sigma_2(g_1)\sigma_1(g_2)}{\sigma_1(g_1g_2)}$.
  This proves the left-hand inequality in
  \ref{item:xi1_attract_inequality}; the right-hand inequality follows
  from \Cref{ratio-sigma1}. The proof of
  \ref{item:xid1_attract_inequality} is analogous.
\end{proof}

Finally we close this subsection with the following elementary
observation:

\begin{lemma}\label{dist-hyp} Let $x\in \mathbb{P}(\mathbb{K}^d)$, $0<\theta<1$ and $V\in \mathsf{Gr}_{d-1}(\mathbb{K}^d)$. There exists $y\in B_{\theta}(x)$ such that $\textup{dist}(y,\mathbb{P}(V))\geq \frac{\theta}{2}$.\end{lemma}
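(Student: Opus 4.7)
The plan is to split into two cases depending on whether $x$ is already far from $\P(V)$. If $\dist{x, \P(V)} \geq \theta/2$, then $y := x$ works immediately, so all the work is in the nontrivial case where $\dist{x, \P(V)} < \theta/2$. In this case, the idea is to move $x$ along the projective line through $x$ and $[v]$, where $V = v^{\perp}$ for some unit vector $v \in \Sph(\K^d)$; note that $[v]$ sits at maximal distance $1$ from $\P(V)$, so moving from $x$ a short distance towards $[v]$ must already push us significantly far from $\P(V)$.

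To make this precise, I would write $x = [u]$ for a unit vector $u$, and, after replacing $u$ by a unit-modulus scalar multiple (which does not change $[u]$), assume that $\alpha := \langle u, v\rangle$ is a nonnegative real number; note that $\alpha = \dist{x, \P(V)} < \theta/2$. Setting $v' := (v - \alpha u)/\sqrt{1-\alpha^2}$ (or $v' := v$ if $\alpha = 0$) yields a unit vector orthogonal to $u$ lying in the span of $u$ and $v$, so that $u_\phi := \cos(\phi) u + \sin(\phi) v'$ is a unit vector for each $\phi \in [0, \pi/2]$. A direct application of the definition of $\dpr$ yields $\dpr(x, [u_\phi]) = \sin(\phi)$, while a short computation of $\langle u_\phi, v\rangle$ gives $\dist{[u_\phi], \P(V)} = \cos(\phi)\alpha + \sin(\phi)\sqrt{1-\alpha^2} \geq \sin(\phi)\sqrt{1-\alpha^2}$.

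The final step is to choose $\sin(\phi) := (\theta/2)/\sqrt{1-\alpha^2}$, which makes the lower bound above equal to $\theta/2$. I would then verify that $\sin(\phi) < \theta$ strictly, to guarantee that $y := [u_\phi]$ lies in the open ball $B_\theta(x)$; this reduces to $\sqrt{1-\alpha^2} > 1/2$, which holds because $\alpha < \theta/2 < 1/2$. There is no deep obstacle here: the lemma is essentially a trigonometric statement about the sine distance on a projective line, and the main subtlety is to confirm that the choice of $\phi$ simultaneously satisfies the strict inequality $\dpr(x, y) < \theta$ (membership in the open ball) and the non-strict inequality $\dist{y, \P(V)} \geq \theta/2$.
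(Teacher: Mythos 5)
Your proof is correct and takes essentially the same approach as the paper: handle the easy case $\dist{x, \P(V)} \ge \theta/2$ separately, and otherwise move $x$ within the two-dimensional span of $u$ and $v$ toward the normal direction of the hyperplane. The paper normalizes $V = e_1^\perp$ and constructs the target point explicitly by setting its $e_1$-component to exactly $\theta/2$, whereas you parametrize the great circle by angle $\phi$ and solve $\sin(\phi)\sqrt{1-\alpha^2} = \theta/2$; you also explicitly verify the strict inequality $\sin(\phi) < \theta$ needed for membership in the open ball, which is a nice point of care.
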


\begin{proof} Without loss of generality assume $V=e_1^{\perp}$. Let
  us write $x=[v_0]$, $v_0=\sum_{i=1}^{d}x_ie_i$. If
  $|x_1|\geq \frac{\theta}{2}$ then
  $\dist{[v_0],\mathbb{P}(e_1^{\perp})}\geq \frac{\theta}{2}$ and the
  statement holds with $y = x$. So, suppose that
  $|x_1|<\frac{\theta}{2}$. Let $w_0$ be the vector
  $\sum_{i=2}^dx_ie_i$, so that $v_0 = x_1e_1 + w_0$, and consider the
  unit vector
  \[
    v_0' = \frac{\theta}{2}e_1 + \frac{\sqrt{1 - \theta^2/4}}{||w_0||}w_0.
  \]
  Note that
  $\textup{dist}([v_0'],\mathbb{P}(e_1^\perp))=\frac{\theta}{2}$ and
  also
  \begin{align*}
    \left|\langle v_0',v_0\rangle \right|
    &\ge \left|\big\langle w_0, \frac{\sqrt{1 - \theta^2/4}}{||w_0||}w_0\big\rangle\right| -
      |x_1|\frac{\theta}{2}\\
    &\ge ||w_0||\sqrt{1 - \frac{\theta^2}{4}} - \frac{\theta^2}{4}\\
    &= \sqrt{1 - |x_1|^2}\sqrt{1 - \frac{\theta^2}{4}} -
      \frac{\theta^2}{4}\\
    &\ge 1 - \frac{\theta^2}{2} \ge \sqrt{1 - \theta^2},
  \end{align*}
  hence $\dpr{[v_0'],[v_0]}\leq \theta$.
\end{proof}

\subsection{Caratheodory metrics on subsets of projective space}
\label{sec:caratheodory_metric}

We need to introduce one more technical tool before we can turn to the
proof of Theorem~\ref{thm:pingpong_word_bound}, namely a metric
defined by Zimmer \cite{Zimmer} on certain open subsets of
$\P(\K^d)$. The definition of the metric relies on a pair of
embeddings
\[
  \iota:\P(\K^d) \hookrightarrow \P(\Real^n),\ 
  \iota^*:\Gr_{d-1}(\K^d) \hookrightarrow \Gr_{n-1}(\Real^n),
\]
satisfying the property that $x \in \P(\K^d)$, $W \in \Gr_{d-1}(\K^d)$
are transverse if and only if $\iota(x)$, $\iota^*(W)$ are
transverse. When $\K = \Real$, we can take $\iota, \iota^*$ to be the
identity maps. When $\K = \C$, we identify $\C^d$ with $\Real^{2d}$,
which realizes $\P(\C^d)$ as a submanifold of $\Gr_2(\Real^{2d})$, and
then take $\iota$ to be the restriction of the Pl\"ucker embedding
$\Gr_2(\Real^{2d}) \to \P(\wedge^2\Real^{2d})$. The map $\iota^*$ is
defined dually, using the natural identifications
$\Gr_{d-1}(\C^d) \simeq \P((\C^d)^*)$ and
$\Gr_{n -1}(\Real^n) \simeq \P((\Real^n)^*)$.

Let $U$ be any nonempty open subset of $\P(\K^d)$. We say that
$U$ is a \emph{proper domain} if there exists some
$W \in \Gr_{d-1}(\K^d)$ which is transverse to every
$x \in \overline{U}$. When $U$ is a proper domain, let
$U^* \subset \Gr_{d-1}(\mathbb{K}^d)$ denote the (nonempty) set
\[
  \big\{W \in \Gr_{d-1}(\mathbb{K}^d) \st v \notin W, \ \forall [v] \in
  \overline{U}\big\}.
\]

Recall that the \emph{cross-ratio}
$\chi:\P(\Real^n)^2 \times \Gr_{n-1}(\Real^n)^2 \to \Real$ can be
defined by the formula
\[
  \chi([v_1], [v_2]; u_1^\perp, u_2^\perp) := \frac{\langle v_1, u_1
    \rangle \langle v_2, u_2 \rangle}{\langle v_1, u_2 \rangle \langle
    v_2, u_1 \rangle},
\]
whenever $[v_1], [v_2]$ are both transverse to $u_1^\perp,
u_2^\perp$. Now, when $U$ is a proper domain, define a function
$\hil_{U}:U \times U \to \Real_{\ge 0}$ as follows: for
$[v_1], [v_2] \in U$, take
\[
  \hil_U([v_1], [v_2]) := \sup_{u_1^\perp, u_2^\perp \in U^*}
  \log \big|\chi(\iota([v_1]), \iota([v_2]); \iota^*(u_1^\perp),
  \iota^*(u_2^\perp))\big|.
\]

When $U$ is a \emph{properly convex domain} in real projective space
(see \Cref{sec:properly_convex_domains}), then $\hil_U$ is, up to a
constant multiplicative factor, precisely the well-known \emph{Hilbert
  metric} on $U$. More generally, $\hil_U$ satisfies the following
properties, which we will use in the sequel:
\begin{theorem}
  \label{thm:caratheodory_metric_shrinks}
  For proper domains $U, U_1, U_2$ in $\P(\K^d)$:
  \begin{enumerate}[label=(\arabic*)]
  \item\label{item:c_metric_is_metric} The function $\hil_U$ defines a
    proper metric on $U$ which induces the subspace topology on $U$ as
    an open subset of $\P(\K^d)$.
  \item\label{item:c_metric_invariance} The metric $\hil_U$ is
    $\GL_d(\K)$-invariant, in the sense that for any $g \in \GL_d(\K)$
    and $x, y \in U$, we have
    \[
      \hil_U(x, y) = \hil_{gU}(gx, gy).
    \]
  \item\label{item:c_metric_weak_contract} If $U_1 \subseteq U_2$,
    then $\hil_{U_1} \ge \hil_{U_2}$.
  \item\label{item:c_metric_strong_contract} If
    $\overline{U_1} \subset U_2$, then there exists a constant
    $\lambda > 1$ depending only on $U_1, U_2$ so that for any
    $x, y \in U_1$, we have
    \[
      \hil_{U_1}(x,y) \ge \lambda \hil_{U_2}(x,y).
    \]
  \end{enumerate}
\end{theorem}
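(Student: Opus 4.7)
The plan is to prove parts~\ref{item:c_metric_is_metric}--\ref{item:c_metric_strong_contract} in order, first reducing the general problem to the real case using the embeddings $\iota$ and $\iota^*$. Because these embeddings are built from a fixed real-linear identification $\C^d \simeq \Real^{2d}$ composed with Pl\"ucker embeddings, they preserve transversality and translate each cross-ratio on $\P(\K^d) \times \Gr_{d-1}(\K^d)$ into a cross-ratio on $\P(\Real^n) \times \Gr_{n-1}(\Real^n)$. Thus $U^*$, the cross-ratio formula, and hence $\hil_U$ can all be computed by viewing $U$ as a proper domain in real projective space, and we may assume $\iota, \iota^*$ are identities; then $\hil_U$ coincides with Zimmer's Caratheodory pseudometric \cite{Zimmer}.

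For~\ref{item:c_metric_is_metric}, symmetry follows from $|\chi(v_1,v_2;u_1,u_2)| = |\chi(v_2,v_1;u_2,u_1)|$, non-negativity from $|\chi(v_1,v_2;u,u)| = 1$, and the triangle inequality from the factorization $\chi(v_1,v_3;u_1,u_2) = \chi(v_1,v_2;u_1,u_2)\chi(v_2,v_3;u_1,u_2)$ combined with subadditivity of the supremum. Non-degeneracy at distinct $x \neq y$ in $U$ follows by perturbing a base hyperplane in $U^*$ (which exists by properness) toward $x$, making $|\langle v_1, u_1\rangle|$ tend to $0$ while $|\langle v_2, u_1\rangle|$ stays bounded below. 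That $\hil_U$ induces the subspace topology and is proper follows from the same approximation: as $y$ exits any compact subset of $U$ one can find $u^\perp \in U^*$ arbitrarily close to $y$, forcing $\hil_U(x,y) \to \infty$. Part~\ref{item:c_metric_invariance} is immediate from $\GL_d(\K)$-invariance of $\chi$ and equivariance of $\iota, \iota^*$. Part~\ref{item:c_metric_weak_contract} follows because $U_1 \subseteq U_2$ implies $\overline{U_1} \subseteq \overline{U_2}$ and hence $U_2^* \subseteq U_1^*$, so the supremum defining $\hil_{U_1}$ is taken over a larger set than that defining $\hil_{U_2}$.

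The main obstacle is part~\ref{item:c_metric_strong_contract}. My plan is a hyperplane-extension argument parallel to the classical proof of contraction for the Hilbert metric under strict inclusion. The key geometric input is the compactness hypothesis $\overline{U_1} \subset U_2$, which I will use to extract a uniform gap $\delta > 0$ such that any $u^\perp \in U_2^*$ can be perturbed toward $\overline{U_1}$ by $\dgr$-distance $\delta$ and remain in $U_1^*$. Given $x, y \in U_1$ and $u_1^\perp, u_2^\perp \in U_2^*$ realizing $\hil_{U_2}(x,y)$ up to arbitrarily small error $\eps$, I will produce perturbations $\tilde u_1^\perp, \tilde u_2^\perp \in U_1^*$ with
\[
\log|\chi(x,y;\tilde u_1^\perp, \tilde u_2^\perp)| \geq \lambda \cdot \log|\chi(x,y;u_1^\perp, u_2^\perp)|
\]
for a fixed $\lambda > 1$ depending only on $U_1, U_2$. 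The heart of the calculation is essentially one-dimensional: restricting to the projective line $\ell$ through $x$ and $y$, the required inequality reduces to the standard Hilbert metric contraction on an interval, namely that a subinterval whose closure is compact in a larger interval carries a Hilbert metric uniformly dominating the ambient one. Taking suprema in the defining formula and sending $\eps \to 0$ then yields $\hil_{U_1}(x,y) \geq \lambda \cdot \hil_{U_2}(x,y)$, which is the required contraction.
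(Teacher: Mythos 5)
The paper's own proof is essentially a citation: parts~(1) and~(4) are quoted directly from \cite{Zimmer} and \cite{Weisman} respectively, and parts~(2) and~(3) are declared immediate from the definition. You attempt full proofs of all four parts instead. Your arguments for parts~(2) and~(3) are correct and amount to exactly the observations the paper treats as immediate. Your outline of part~(1) is mostly sound, but the step asserting that as $y$ exits compacta in $U$ one can find $u^\perp \in U^*$ arbitrarily close to $y$ is not automatic for an arbitrary proper domain: $\partial U$ may contain points lying in the interior of $\overline{U}$, and no element of $U^*$ approaches such points, since $U^*$ depends only on $\overline{U}$. So properness as stated implicitly requires $U$ to coincide with the interior of its closure, or some comparable regularity, and you do not address this.

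The genuine gap is in part~(4). Reducing to the one-dimensional Hilbert-metric contraction on the line $\ell$ through $x$ and $y$ is the right phenomenon, but the quantitative bridge is missing. Controlling the hyperplane perturbation by a uniform $\dgr$-gap $\delta$ (which does exist, since $\overline{U_2^*} \subset U_1^*$) does not translate into a uniform projective gap along $\ell$: the cross-ratio $\chi(x,y;u_1^\perp,u_2^\perp)$ depends only on the intersection points $u_i^\perp \cap \ell$, and the map $u^\perp \mapsto u^\perp \cap \ell$ is not Lipschitz in $\dgr$ uniformly over lines $\ell$. A small $\dgr$-perturbation of a hyperplane nearly tangent to $\ell$ can move the intersection point arbitrarily far, while a hyperplane meeting $\ell$ near the ideal point of an affine chart barely moves its intersection even under a definite $\dgr$-perturbation. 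To actually invoke the interval contraction and get a single $\lambda > 1$ independent of $x, y$, one needs a uniform projective gap among the four points $x, y, u_1^\perp\cap\ell, u_2^\perp\cap\ell$ over all relevant lines $\ell$; producing that uniformity is precisely the nontrivial content the paper delegates to \cite[Prop.~7.11]{Weisman}, and your sketch identifies the one-dimensional mechanism without supplying it.
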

\begin{proof}
  \ref{item:c_metric_is_metric} is \cite[Thm. 5.2]{Zimmer}, and
  \ref{item:c_metric_invariance} and \ref{item:c_metric_weak_contract}
  are immediate from the definition of
  $\hil_U$. \ref{item:c_metric_strong_contract} is
  \cite[Prop. 7.11]{Weisman}.
\end{proof}

\begin{remark}\ 
  \begin{enumerate}[label=(\alph*)]
  \item In general, the metric $\hil_U$ need not be complete; in the
    case $\K = \Real$, $\hil_U$ is complete if and only if $U$ is a
    properly convex open subset of $\P(\Real^d)$.
  \item The construction of $\hil_U$ in \cite{Zimmer} also makes sense
    in the situation where $U$ is an open subset of some Grassmannian
    $\Gr_k(\K^d)$, for any $1 \le k < d$; however, in this paper we
    only need to work with $\hil_U$ in the situation where $k = 1$.
  \end{enumerate}
\end{remark}

\section{Estimates for 1-divergent semigroups}

In this section we prove \Cref{thm:pingpong_word_bound}, as well as
its corollaries \Cref{cor:pingpong_qi_embed} and
\Cref{semigroup-Anosov}. Throughout this section we assume that we
have two semigroups
$\Gamma_1,\Gamma_2\subset \mathsf{GL}_d(\mathbb{K})$ which are in
ping-pong position relative to subsets $U_1, U_2 \subset \P(\K^d)$,
$V_1, V_2 \subset \Gr_{d-1}(\K^d)$. Recall that this means that
whenever $\{i, j\} = \{1, 2\}$, the following conditions hold:
\begin{enumerate}[label=(P\arabic*)]
\item\label{item:pingping_i} The sets $\overline{U_i}$ and $\overline{V_j}$ are transverse.
\item\label{item:pingpong_ii} For every
  $\gamma \in \Gamma_i \minus \{I_d\}$, we have
  $\gamma \overline{U_j} \subset U_i$, and
  $\gamma^{-1}\overline{V_j} \subset V_i$.
\end{enumerate}


\subsection{Setup for the proof}

By condition \ref{item:pingping_i} above, we may fix $\varepsilon>0$
so that for $\{i, j\} = \{1,2\}$, we have
\begin{align}\label{est-1-}
  \dist{\overline{U}_i, \overline{V_j}} := \inf\left\{ \dist{x, y} : x \in
  \overline{U_i}, y \in \overline{V_j}\right\}
  \geq \varepsilon.
\end{align}
We also fix $x_i\in U_{i}$ (resp. $y_i\in V_i$) and
$0<\theta<\varepsilon^2$, depending only on $U_1,U_2,V_1,V_2$, such
that $B_{\theta}(x_i)\subset U_i$ (resp. $B_{\theta}(y_i)\subset V_i$)
for $i=1,2$.

Let $\Delta$ be the semigroup $\langle \Gamma_1, \Gamma_2 \rangle$. We
may view each element $g \in \Delta$ as a reduced alternating word
$\gamma_1 \cdots \gamma_n$. The correspondence between such
alternating words and elements of $\Delta$ need not be
bijective. However, this does not matter for any of the arguments
below, so we will frequently identify elements in $\Delta$ with such
reduced words.

Let
$M = \max\left\{\frac{16}{\varepsilon
    \theta},\frac{8\sqrt{d-1}}{\varepsilon^2}\right\}$, and let
$F \subset \Delta$ be the set of all elements $h \in \Delta$
satisfying $\frac{\sigma_1}{\sigma_2}(h) < M$; in other words $F$ is a
minimal subset such that $\Delta \minus F$ satisfies
\begin{align}\label{lower-1}
  \frac{\sigma_1}{\sigma_2}(h) \ge \max\left\{\frac{16}{\varepsilon
  \theta},\frac{8\sqrt{d-1}}{\varepsilon^2}\right\}\ \forall h \in
  \Delta \minus F.
\end{align}

Later we will show that $\Delta$ is $1$-divergent, which means that
$F$ is actually a finite set. For now, since we know that $\Gamma_1$
and $\Gamma_2$ are $1$-divergent, we know that each of
$\Gamma_1 \cap F$ and $\Gamma_2 \cap F$ is finite.

\begin{lemma}\label{estimate-Xi}
  Let $g \in \Delta \minus F$ and $g = g_1\cdots g_n$, $n\geq 1$ be a
  reduced word in $\Delta \minus F$, with
  $g_i\in \Gamma_1\cup \Gamma_2\minus \{I_d\}$.
  \begin{enumerate}[label=(\roman*)]
  \item if $i(g_1)\in \{1,2\}$ is the unique index such that
    $g_1\in \Gamma_{i(g_1)}$, we have
    \begin{align*}
      \dpr\left(\Xi_{1}(g), U_{i(g_1)}\right) \leq \frac{\varepsilon}{8}.
    \end{align*}
  \item if $i(g_n)\in \{1,2\}$ is the unique index such that
    $g_n\in \Gamma_{i(g_n)}$, we have
    \begin{align*}
      \dgr\left(\Xi_{d-1}(g^{-1}), V_{i(g_n)} \right)\leq
      \frac{\varepsilon}{8}.
    \end{align*}
  \end{enumerate}
\end{lemma}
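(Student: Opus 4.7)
The plan is to prove both parts by combining the contraction dynamics from \Cref{lem:BPS-estimate-1} with the iterated ping-pong action of the reduced word $g_1\cdots g_n$ on a carefully chosen test point (or test hyperplane). Since $g \in \Delta \minus F$, the lower bound \eqref{lower-1} makes $\sigma_2/\sigma_1(g)$ very small; \Cref{lem:BPS-estimate-1} then forces points not too close to the repelling hyperplane $\Xi_{d-1}(g^{-1})$ to be mapped very close to the attracting subspace $\Xi_1(g)$. The plan is to arrange the test point so that its $g$-image lands inside $U_{i(g_1)}$ by ping-pong, and is also close to $\Xi_1(g)$ by the contraction; this forces $\Xi_1(g)$ itself to be close to $U_{i(g_1)}$.

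For part (i), I would first apply \Cref{dist-hyp} to the ball $B_\theta(x_{3-i(g_n)}) \subset U_{3-i(g_n)}$ and the hyperplane $\P(\Xi_{d-1}(g^{-1}))$ to obtain a point $x \in U_{3-i(g_n)}$ with $\dist{x, \P(\Xi_{d-1}(g^{-1}))} \geq \theta/2$. A direct iteration of \ref{item:pingpong_ii}, tracking which of $\Gamma_1, \Gamma_2$ each letter lies in, shows that $g_k g_{k+1}\cdots g_n x \in U_{i(g_k)}$ for every $k$, so in particular $gx \in U_{i(g_1)}$. Plugging the lower bound on $\dist{x, \P(\Xi_{d-1}(g^{-1}))}$ and the upper bound $\sigma_2/\sigma_1(g) \leq 1/M \leq \varepsilon\theta/16$ into \Cref{lem:BPS-estimate-1} gives $\dpr(gx, \Xi_1(g)) \leq \varepsilon/8$, which yields (i) since $gx \in U_{i(g_1)}$.

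For part (ii), I plan to use part (i) together with a hyperplane analogue of \Cref{lem:BPS-estimate-1}. By part (i) and the transversality \eqref{est-1-}, any hyperplane $W \in V_{3-i(g_1)}$ satisfies
\[
\dist{\Xi_1(g), \P(W)} \geq \dist{\overline{U_{i(g_1)}}, \overline{V_{3-i(g_1)}}} - \dpr(\Xi_1(g), U_{i(g_1)}) \geq \varepsilon - \varepsilon/8,
\]
so I may simply take $W = y_{3-i(g_1)}$. Iterating the second half of \ref{item:pingpong_ii} shows that $g^{-1} W = g_n^{-1} \cdots g_1^{-1} W \in V_{i(g_n)}$. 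The dual of \Cref{lem:BPS-estimate-1}, obtained by running the same calculation on the conjugate-transposed action and using the identification $\Xi_{d-1}(g^{-1}) = (k_g')^{-1} e_1^\perp$, gives
\[
\dgr\!\left(g^{-1} W, \Xi_{d-1}(g^{-1})\right) \leq \frac{\sigma_2}{\sigma_1}(g) \cdot \frac{1}{\dist{\Xi_1(g), \P(W)}},
\]
and the second bound on $M$ (the $8\sqrt{d-1}/\varepsilon^2$ term) is calibrated precisely so that the right-hand side is at most $\varepsilon/8$. The main bookkeeping obstacle is simply tracking which index $i(g_k)$ appears after each application of ping-pong; the hyperplane dual of \Cref{lem:BPS-estimate-1} is a routine calculation that mirrors the one already given.
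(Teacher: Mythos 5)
Your proposal is correct and follows essentially the same strategy as the paper: choose a test point (respectively hyperplane) whose $g$-image lands inside $U_{i(g_1)}$ (resp.\ $V_{i(g_n)}$) by iterated ping-pong, then use the contraction estimate of \Cref{lem:BPS-estimate-1} and the lower bound \eqref{lower-1} to conclude that the image is within $\varepsilon/8$ of $\Xi_1(g)$ (resp.\ $\Xi_{d-1}(g^{-1})$). Part (i) is identical to the paper; for part (ii), the paper's ``analogous'' argument applies the dual of \Cref{dist-hyp} to get a test hyperplane $W\in V_{3-i(g_1)}$ with $\dist{\Xi_1(g),\P(W)}\geq\theta/2$, whereas you instead feed part (i) and the transversality \eqref{est-1-} back in to get the sharper bound $7\varepsilon/8$; both variants work. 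One small misstatement: you claim the $8\sqrt{d-1}/\varepsilon^2$ term in $M$ is ``calibrated precisely'' for your version of part (ii), but with the lower bound $7\varepsilon/8$ you need $M\geq 64/(7\varepsilon^2)$, and $8\sqrt{d-1}/\varepsilon^2$ can be as small as $8/\varepsilon^2<64/(7\varepsilon^2)$ when $d=2$. What actually saves you is the other term: since $\theta<\varepsilon^2<\varepsilon$, we have $16/(\varepsilon\theta)>16/\varepsilon^3>64/(7\varepsilon^2)$, so the estimate closes via that part of $M$. This is a minor misattribution, not a gap.
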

\begin{proof} By the ping-pong conditions \ref{item:pingping_i} and
  \ref{item:pingpong_ii} we have that $gU_j\subset U_i$, where
  $i = i(g_n)\in \{1,2\}$ is the unique index such that
  $g_1\in \Gamma_{i}$ and $j = j(g_n)\in \{1,2\}$ is the unique index
  such that $g_n\in \Gamma_{3-j}$. In particular, by Lemma
  \ref{dist-hyp} we may choose $x\in B_{\theta}(x_j)$ such that
  $\textup{dist}(x,\Xi_{d-1}(g^{-1}))\geq \frac{\theta}{2}$. Thus,
  since $\frac{\sigma_1}{\sigma_2}(g)>16(\varepsilon \theta)^{-1}$, by
  \Cref{lem:BPS-estimate-1} we have that
  $$d_{\mathbb{P}}\left(gx, \Xi_{1}(g)\right)\leq
  \frac{2}{\theta}\frac{\sigma_2}{\sigma_1}(g)\leq
  \frac{\varepsilon}{8}.$$

  Since $gx\in U_{i(g_1)}$, part (i) follows. The proof of (ii) is
  analogous.
\end{proof}

\subsection{Proof of \Cref{thm:pingpong_word_bound}}

We begin with the proof of \Cref{thm:pingpong_word_bound} part
\ref{item:wordlength_bound}. First, we observe:
\begin{lemma}
  \label{lem:eps_nbhd_pingpong}
  Let $\{i,j\} = \{1,2\}$. There exists some $\delta > 0$ so that
  $N_{\delta}(U_i)$ and $N_{\delta}(V_j)$ are proper domains (see
  \Cref{sec:caratheodory_metric}), and so that for every
  $\gamma \in \Gamma_i \minus \{I_d\}$, we have
  \begin{align*}
    \gamma N_{\delta}(U_j) \subset N_{\delta/2}(U_i),\\
    \gamma^{-1}N_{\delta}(V_j) \subset N_{\delta/2}(V_i).
  \end{align*}
\end{lemma}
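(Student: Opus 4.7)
My plan is to split $\Gamma_i \setminus \{I_d\}$ according to the first singular value gap: the generic regime, where the gap is large, is handled by the contraction estimate \Cref{Lipschitz}, while its complement is finite by 1-divergence and handled by continuity plus compactness. The hyperplane side follows by duality. The proper-domain claim is immediate: by \eqref{est-1-} any hyperplane $W \in \overline{V_j}$ satisfies $\dist{W, \overline{U_i}} \geq \varepsilon$, so once $\delta < \varepsilon/2$ it remains transverse to every point of $\overline{N_{\delta}(U_i)}$, and the same argument handles $N_\delta(V_j)$.

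For the contraction, I apply \Cref{estimate-Xi} to single-letter reduced words to obtain a threshold $M > 0$ such that every $\gamma \in \Gamma_i$ with $\sigma_1/\sigma_2(\gamma) \geq M$ satisfies $\dpr(\Xi_1(\gamma), U_i) \leq \varepsilon/8$ and $\dgr(\Xi_{d-1}(\gamma^{-1}), V_i) \leq \varepsilon/8$. For such a $\gamma$, any $\delta \leq \varepsilon/8$, $y \in N_\delta(U_j)$, and $x_0 \in U_j$ within $\delta$ of $y$, the triangle inequality combined with $\dist{\overline{U_j}, \overline{V_i}} \geq \varepsilon$ yields $\dist{x_0, \Xi_{d-1}(\gamma^{-1})} \geq 7\varepsilon/8$. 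Applying \Cref{Lipschitz} at $x_0$ then gives $\dpr(\gamma y, \gamma x_0) \leq \frac{16\delta}{7\varepsilon}\cdot \frac{\sigma_2}{\sigma_1}(\gamma)$, and since $\gamma x_0 \in U_i$ I enlarge $M$ above $32/(7\varepsilon)$ to force $\gamma y \in N_{\delta/2}(U_i)$. The complement $F_i := \{\gamma \in \Gamma_i \st \sigma_1/\sigma_2(\gamma) < M\}$ is finite by 1-divergence; for each $\gamma \in F_i$, uniform continuity of $\gamma$ together with compactness of $\gamma \overline{U_j} \subset U_i$ furnishes $\delta_\gamma > 0$ with $\gamma N_{\delta_\gamma}(U_j) \subset U_i$, and setting $\delta := \min(\varepsilon/8,\, \min_{\gamma \in F_i}\delta_\gamma)$ covers both regimes at once.

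For the hyperplane inclusion I use the natural isometric identification $\Gr_{d-1}(\K^d) \simeq \P((\K^d)^*)$; under it the action of $\gamma^{-1}$ on hyperplanes corresponds to the point action of the conjugate transpose $\gamma^*$ on dual projective space, and $\gamma^*$ has the same singular values as $\gamma$. Hence $\Gamma_i$ remains 1-divergent in the dual picture, the dual ping-pong data $\overline{V_j}, \overline{V_i}, \overline{U_j}, \overline{U_i}$ satisfy the same hypotheses as before, and the argument above applied verbatim yields some $\delta' > 0$ with $\gamma^{-1}N_{\delta'}(V_j) \subset N_{\delta'/2}(V_i)$. The final $\delta$ is the minimum of the two. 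The main obstacle is uniform control of constants in the generic regime, where the 1-divergence hypothesis carries the real burden by confining infinitely many $\gamma$ to the contraction argument; the dualization for $V$ is essentially bookkeeping.
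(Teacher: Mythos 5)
Your proof is correct and follows essentially the same strategy as the paper's: split $\Gamma_i$ by a singular-value-gap threshold, use the contraction estimates together with \Cref{estimate-Xi} for the large-gap regime, handle the finite remainder by compactness and continuity, and treat the hyperplane inclusion by duality. The only minor variation is that you invoke \Cref{Lipschitz} at a nearby point $x_0\in U_j$ (so $\gamma x_0\in U_i$ by ping-pong) rather than the paper's route through \Cref{lem:BPS-estimate-1} and a second application of \Cref{estimate-Xi} to land near $\Xi_1(g)$; both give the same conclusion.
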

\begin{proof}
  The ping-pong conditions imply that both $U_i$ and $V_i$ are proper
  domains. Then since $\overline{U_i}$ and $\overline{V_i}$ are
  compact, and transversality is an open condition, there exists a
  $\delta$ so that $N_\delta(U_i)$ and $N_\delta(V_i)$ are proper.

  So, we just need to show that there is some $\delta$ so that the
  desired inclusions hold. We will only show this for the sets
  $U_i, U_j$, since the inclusions for $V_i, V_j$ are analogous. The
  first step is to show that there is some $\delta > 0$ and a finite
  subset $F' \subset \Gamma_i$ so that so that the desired inclusions
  hold for all $\gamma \in \Gamma_i \minus F'$. Let
  $\delta = \eps / 2$, where $\eps$ is the constant from
  \eqref{est-1-}, and let $F \subset \Delta$ be the subset defined in
  \eqref{lower-1}. We choose a finite set $F' \subset \Gamma_i$ large
  enough to contain $F \cap \Gamma_i$, and so that
  $\frac{\sigma_1}{\sigma_2}(g) \ge 64/3\eps^2$ for all
  $g \in \Gamma_i \minus F'$.

  Fix $x \in N_{\delta}(U_j)$ and $g \in \Gamma_i \minus F'$. By
  \Cref{estimate-Xi} we know that
  \[
    \dgr\left(\Xi_{d-1}(g^{-1}), V_i\right) \le \frac{\eps}{8},
  \]
  and since $\dpr(x, U_j) \le \eps/2$ and
  $\dist{\overline{U_j}, \overline{V_i}} \ge \eps$, we have therefore
  \[
    \dist{\Xi_{d-1}(g^{-1}), x} \ge \frac{3\eps}{8}.
  \]
  Then, as $\frac{\sigma_1}{\sigma_2}(g) \ge 64/3\eps^2$,
  \Cref{lem:BPS-estimate-1} implies that
  \[
    \dpr(gx, \Xi_1(g)) \le \frac{\eps}{8}.
  \]
  Then we can apply \Cref{estimate-Xi} again to see that
  $\dpr(gx, U_i) \le \eps/4 = \delta/2$.

  This shows that for all $\gamma \in \Gamma_i \minus F'$, we have
  $\gamma N_\delta(U_j) \subset N_{\delta/2}(U_i)$. Now, since
  $\overline{U_j}$ is compact, the ping-pong condition
  \ref{item:pingpong_ii} implies that for every
  $\gamma \in F' \minus \{\identity\}$, there is some
  $\delta(\gamma) > 0$ so that
  $\gamma N_{\delta(\gamma)}(U_j) \subset U_i$. So, by replacing
  $\delta$ with the minimum of $\delta$ and
  $\min_{\gamma \in F' \minus \identity} \delta(\gamma)$, we obtain
  the desired inclusions for every
  $\gamma \in \Gamma_i \minus \{I_d\}$.
\end{proof}

\begin{proof}[Proof of Theorem~\ref{thm:pingpong_word_bound}
  \ref{item:wordlength_bound}]
  Consider a reduced alternating word $\gamma_1 \cdots \gamma_n$ in
  $\langle \Gamma_1, \Gamma_2 \rangle$, with $\gamma_1 \in \Gamma_i$
  and $\gamma_n \in \Gamma_j$.

  For any $r > 0$ and any $U \subset \P(\K^d)$, let $U^r$ denote the
  $r$-neighborhood $N_r(U)$. Fix $\delta > 0$ from
  \Cref{lem:eps_nbhd_pingpong}. The previous
  \cref{lem:eps_nbhd_pingpong} implies that the
  $U_i^\delta, U_j^\delta$ are both proper domains, so we may let
  $D_i, D_j$ respectively denote the diameters of the sets
  $U_i^{\delta/2}, U_j^{\delta/2}$ with respect to the Caratheodory
  metrics $\hil_{U_i^\delta}, \hil_{U_j^\delta}$ from
  \Cref{sec:caratheodory_metric}.

  We may now inductively apply \Cref{lem:eps_nbhd_pingpong} and
  \Cref{thm:caratheodory_metric_shrinks}
  \ref{item:c_metric_strong_contract} to see that there is a uniform
  constant $\lambda > 1$ so that, with respect to the metric
  $U_i^\delta$, the diameter of the set $\gamma_1 \cdots \gamma_nU_j$
  is at most $\lambda^{-(n-1)}D_j$. Now, observe that the function
  $\hil_{U_i^\delta}:U_i \times U_i \to \Real_{\ge 0}$ is continuously
  differentiable. So, since $\overline{U_i}$ is a compact subset of
  $U_i^\delta$, the metrics $\hil_{U_i^\delta}$ and $\hil_\P$ are
  bi-Lipschitz equivalent on $U_i$, and thus the diameter of
  $\gamma_1 \cdots \gamma_nU_j$ (with respect to the metric $d_\P$) is
  at most $\lambda^{-n}A$ for a uniform constant $A < \infty$.

  Let $g = \gamma_1 \cdots \gamma_n$, and pick a Cartan decomposition
  $g = k_g \exp(\mu(g)) k_g'$. Recall that we have chosen
  $x_j \in U_j$ and $\theta > 0$ so that
  $B_{\theta}(x_j) \subset U_j$. Using \Cref{dist-hyp}, fix
  $x \in B_{\theta}(x_j)$ so that
  $\hil_\P(x, \P(k_g'^{-1}e_1^\perp)) > \theta/2$. Then apply
  Lemma~\ref{gap-estimate1} to the balls $B_{\theta/2}(x)$ and
  $B_r(gx)$ with $r = \lambda^{-n}A$ to obtain the desired estimate.
\end{proof}

The next step is to use the initial estimate above to prove the
following:
\begin{proposition}
  \label{prop:combination_divergent}
  The semigroup $\Delta = \langle \Gamma_1, \Gamma_2 \rangle$ is
  $1$-divergent. In particular, the set $F$ defined above is finite.
\end{proposition}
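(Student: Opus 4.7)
The plan is to argue by contradiction. Suppose $(\delta_n) \subset \Delta$ is an infinite sequence of pairwise distinct elements along which $\frac{\sigma_1}{\sigma_2}(\delta_n)$ stays bounded. Part~\ref{item:wordlength_bound} of \Cref{thm:pingpong_word_bound}, which has just been proved, forces every reduced-word expression of $\delta_n$ to have length at most some uniform constant. Passing to a subsequence, we may therefore assume that every $\delta_n$ is a reduced word $\gamma_1^{(n)} \cdots \gamma_\ell^{(n)}$ of fixed length $\ell$ with a fixed alternating color pattern $(s_1, \ldots, s_\ell)$. A further diagonal subsequence lets us assume that, for each $i$, either $\gamma_i^{(n)}$ is constant in $n$, or $(\gamma_i^{(n)})_n$ runs through pairwise distinct elements of $\Gamma_{s_i}$; in the latter case the $1$-divergence hypothesis on $\Gamma_{s_i}$ yields $\frac{\sigma_1}{\sigma_2}(\gamma_i^{(n)}) \to \infty$. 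Since the $\delta_n$ are distinct, at least one index $i$ falls into this \emph{varying} case.

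The heart of the argument is to show that some ball of fixed radius is mapped by $\delta_n$ to a ball whose radius tends to $0$. Set $j = 3 - s_\ell$, and trace a ball of radius $\theta/2$ centered at a point of $U_j$ through the factors $\gamma_\ell^{(n)}, \gamma_{\ell-1}^{(n)}, \ldots, \gamma_1^{(n)}$. By the ping-pong inclusions, after applying the last $k$ factors the image is contained in $\overline{U_{s_{\ell-k+1}}}$; this ensures that at the next step, the current ball is at distance at least $\eps - \eps/8$ from the kernel hyperplane $\Xi_{d-1}((\gamma_{\ell-k}^{(n)})^{-1})$, by combining the ping-pong separation \eqref{est-1-} with \Cref{estimate-Xi} applied to the varying factors (which lie in $\Delta\minus F$ for $n$ large). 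Consequently \Cref{Lipschitz} applies at each varying factor and contributes a multiplicative factor of order $\frac{\sigma_2}{\sigma_1}(\gamma_i^{(n)}) \to 0$. Each fixed factor, being a diffeomorphism of $\P(\K^d)$, has bounded Lipschitz constant on the relevant compact ping-pong domain and contributes only a bounded multiplier. Since at least one factor is varying, the product of these multipliers tends to $0$, and so does the radius of $\delta_n B_{\theta/2}(y)$ for any center $y$ we choose.

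To conclude, we pick the center $y$ depending on $n$: using \Cref{dist-hyp}, choose $y \in B_{\theta/2}(x_j)$ with $\dist{y, \Xi_{d-1}(\delta_n^{-1})} \geq \theta/4$; then $B_{\theta/2}(y) \subset B_{\theta}(x_j) \subset U_j$, so the previous paragraph gives $\delta_n B_{\theta/2}(y) \subset B_{r_n}(\delta_n y)$ with $r_n \to 0$. \Cref{gap-estimate1} then yields $\frac{\sigma_1}{\sigma_2}(\delta_n) \geq \frac{\theta^2}{32\, r_n} \to \infty$, contradicting the boundedness assumption. The finiteness of $F$ is then immediate: if $F$ were infinite it would contain a sequence of distinct elements violating $1$-divergence of $\Delta$. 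The main obstacle is the bookkeeping in the trace, namely verifying that intermediate balls remain inside the correct ping-pong domain at every step so that the varying-factor contraction from \Cref{Lipschitz} and the fixed-factor Lipschitz bounds can be iterated together, and absorbing the initial finite range of $n$ for which some varying factor might still fail the hypotheses of \Cref{estimate-Xi}.
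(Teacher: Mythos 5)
Your proof is correct and reaches the same contradiction using the same core ingredients (\Cref{estimate-Xi}, the contraction estimates, \Cref{dist-hyp}, \Cref{gap-estimate1}), but the route to the contradiction differs from the paper's in structure. The paper works iteratively from the left: it first shows that $\gamma_{1,n}$ eventually lies in a finite subset of $\Gamma_1$---if not, \Cref{lem:BPS-estimate-1} forces the diameter of $\gamma_{1,n}U_2$, and hence of $g_nU_k$, to tend to zero, contradicting \Cref{gap-estimate1}---then passes to a subsequence where $\gamma_{1,n}$ is constant, strips this factor, and invokes \Cref{gap-product-1} to transfer the bounded-gap assumption to the shorter word $\gamma_{2,n}\cdots\gamma_{m,n}$, iterating until all $m$ slots are constant and contradicting pairwise distinctness. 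You instead classify the slots as varying or constant once and for all via a diagonal extraction and then thread a single ball through the whole word. Your version skips the iteration and the appeal to \Cref{gap-product-1}, at the cost of having to control the geometry across all $\ell$ factors at once.

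The bookkeeping concern you flag is the one genuine soft spot in your write-up, but it can be dispatched cleanly if you drop \Cref{Lipschitz} at the varying slots in favor of \Cref{lem:BPS-estimate-1}. The obstruction to using \Cref{Lipschitz} is that it requires the incoming ball to have radius at most half the distance to the repelling hyperplane, and a preceding constant factor with large Lipschitz constant could push the ball past that threshold. But at every step the image lies inside some $\overline{U_{s_k}}$ by the ping-pong inclusions, and by \eqref{est-1-} together with \Cref{estimate-Xi} this entire compact set sits at distance at least $\frac{7\eps}{8}$ from $\Xi_{d-1}\big((\gamma_k^{(n)})^{-1}\big)$ whenever that factor is varying and $n$ is large; \Cref{lem:BPS-estimate-1} then bounds the diameter of the next image by $\frac{16}{7\eps}\frac{\sigma_2}{\sigma_1}(\gamma_k^{(n)})\to 0$, with no hypothesis on the size of the incoming set. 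After the leftmost varying factor the diameter tends to zero, and the remaining constant factors only multiply it by a fixed constant, so the terminal radius $r_n$ still tends to zero. Your concluding application of \Cref{dist-hyp} and \Cref{gap-estimate1} is exactly right, and the deduction of finiteness of $F$ is immediate.
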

For a similar result to the above, see Theorem 3.1 in
\cite{Dey-Kapovich}. We provide a proof of this result since our
ping-pong setup is slightly different; in particular we do not assume
disjointness of the ping-pong sets $U_1, U_2$.

Note that we could also get a sharper version of
\Cref{prop:combination_divergent} as an immediate consequence of parts
\ref{item:wordlength_bound} and \ref{item:pingpong_12_bound} of
\Cref{thm:pingpong_word_bound}. However, we need the version above to
establish the rest of \Cref{thm:pingpong_word_bound}.
\begin{proof}[Proof of \Cref{prop:combination_divergent}]
  Suppose that the statement of the proposition is false. Then there
  is an infinite sequence $g_n$ of pairwise distinct elements in
  $\Delta$ such that $\frac{\sigma_1}{\sigma_2}(g_n)$ is bounded. We
  may write each of these elements as an alternating word
  \[
    g_n = \gamma_{1,n}\ldots \gamma_{m_n, n},
  \]
  so that each $\gamma_{i, n}$ is a nontrivial element in some
  $\Gamma_j$.

  Part \ref{item:wordlength_bound} of \Cref{thm:pingpong_word_bound}
  immediately implies that $m_n$ is uniformly bounded in $n$, so we
  may pass to a subsequence and assume that $m_n = m$ for some fixed
  $m$, independent of $n$. By passing to a further subsequence, we may
  assume that for every $n$, we have $\gamma_{1,n} \in \Gamma_j$ and
  $\gamma_{m,n} \in \Gamma_k$ for some fixed $j, k$; without loss of
  generality $j = 1$.

  To complete the proof, we use an iterative argument exploiting the
  relationship between $1$-divergence and expansion/contraction in
  $\P(\K^d)$; this is similar to the approach in
  e.g. \cite[Lem. 3.3]{Dey-Kapovich}. We first claim that the elements
  in the sequence $\gamma_{1,n}$ must lie in a fixed finite subset of
  $\Gamma_1$. If not, then after further extraction we may assume that
  $\gamma_{1,n} \in \Gamma_{1,n} \minus F$ for all $n$. Then by
  \Cref{estimate-Xi}, we have
  \[
    \dgr(\Xi_{d-1}\left(\gamma_{1,n}^{-1}), V_1\right) < \frac{\eps}{8},
  \]
  which implies that
  $\dist{\Xi_{d-1}(\gamma_{1,n}^{-1}), x} > \frac{7\eps}{8}$ for every
  $x \in U_2$. Then, by \Cref{lem:BPS-estimate-1}, since we assume
  $\Gamma_1$ is $1$-divergent, the diameter of the set
  $\gamma_{1,n}U_2$ tends to zero as $n \to \infty$. By the ping-pong
  setup, the diameter of
  \[
    g_nU_k = \gamma_{1,n} \ldots \gamma_{m,n}U_k
  \]
  also tends to zero. But, because of \Cref{gap-estimate1}, this
  contradicts the fact that $\frac{\sigma_1}{\sigma_2}(g_n)$ is
  bounded. This shows our claim, meaning that the elements appearing
  in the sequence $\gamma_{1,n}$ lie in some finite subset of
  $\Gamma_1$.

  Then, since the elements in $g_n$ are pairwise distinct, a
  subsequence of
  $g_n' := \gamma_{1,n}^{-1}g_n = \gamma_{2,n} \ldots \gamma_{m,n}$ is
  pairwise distinct; further, by \Cref{gap-product-1}, we also know
  that $\frac{\sigma_1}{\sigma_2}(\gamma_{2,n} \ldots \gamma_{m,n})$
  is bounded. Arguing iteratively as above, we eventually see that
  (after further extraction) each of the $m$ sequences
  $\gamma_{\ell,n}, 1 \le \ell \le m$ lies in a finite subset of some
  $\Gamma_j$, contradicting the fact that the $g_n$ are pairwise
  distinct.
\end{proof}

The proof of the rest of \Cref{thm:pingpong_word_bound} essentially
follows the same rough outline as the proof of
\Cref{prop:combination_divergent}, but with more precise estimates
throughout.

\begin{proof}[Proof of Theorem \ref{thm:pingpong_word_bound}
  \ref{item:pingpong_1_bound}] Via \Cref{prop:combination_divergent},
  we can define $C_0:=\max_{f\in F}\frac{\sigma_1}{\sigma_d}(f)$ and
  set
\[
C_2:=\max \left\{\frac{2}{\varepsilon},C_0^{2}\right\}.
\]

We shall prove, using induction on $n\in \mathbb{N}$, that for every reduced word $g_1\cdots g_n\in \Delta$, $g_i\in \Gamma_i\minus \{I_d\}$, we have that \begin{align}\label{ineq-bound1-induction}\sigma_1\left(g_1\cdots g_n\right)\geq C_2^{-n}\sigma_1(g_1)\cdots \sigma_1(g_n).\end{align}

Before we proceed with the induction, we observe the following. For
any $h_1,h_2 \notin F$ and any reduced word $h_i\in \Delta$ of the
form $h_i=h_{i1}\cdots h_{im_i}$, such that
$h_{1m_1}\in \Gamma_{i_1}$, $h_{2m_2}\in \Gamma_{i_2}$, $i_1\neq i_2$,
by Lemma \ref{estimate-Xi} (i) and (ii) and (\ref{est-1-}) we have
that
\begin{align}\label{dist-reducedword1}
  \dist{\Xi_1(h_2),\Xi_{d-1}(h_1^{-1})}
  &\geq \inf_{y \in V_{i_1}}\dist{\Xi_1(h_2), y}-\frac{\varepsilon}{8}\geq \dist{U_{i_2},
    V_{i_1}}-\frac{3\varepsilon}{8} >\frac{\varepsilon}{2}.
\end{align}

Note that the desired statement holds trivially when $n=1$, so assume
that (\ref{ineq-bound1-induction}) holds for reduced words in $n$
elements, and fix a reduced word $g_1\cdots g_n g_{n+1}\in \Delta$. If
either $g_1\cdots g_n \in F$ or $g_{n+1}\in F$, then note that
$$\sigma_1\left(g_1\cdots g_ng_{n+1}\right)\geq
C_0^{-1}\sigma_1(g_1\cdots g_n)\sigma_1(g_{n+1})\geq
C_2^{-n-1}\sigma_1(g_1)\cdots \sigma_1(g_{n+1})$$ and the statement
holds true. So, suppose that $g_1\cdots g_n\notin F$ and
$g_{n+1}\notin F$.

Since $g_1\cdots g_{n}g_{n+1}\in \Delta$ is reduced, by
(\ref{dist-reducedword1}) and Lemma \ref{ratio-sigma1} we have
that \begin{align*} \frac{\sigma_1(g_1\cdots
    g_ng_{n+1})}{\sigma_1(g_1\cdots g_{n})\sigma_1(g_{n+1})}&\geq
  \dist{\Xi_{1}(g_{n+1}), \Xi_{d-1}((g_1\cdots g_n)^{-1})}\geq
  \frac{\varepsilon}{2}.\end{align*} In particular, by the inductive
hypothesis, since $\frac{\varepsilon}{2}\geq C_2^{-1}$, we have
that
$$\sigma_1\left(g_1\cdots g_ng_{n+1}\right)\geq
C_2^{-n-1}\sigma_1(g_1)\cdots \sigma_1(g_{n+1}).$$ This completes the
proof of the induction and (\ref{ineq-bound1-induction})
follows.\end{proof}

\begin{proof}[Proof of Theorem \ref{thm:pingpong_word_bound}
  \ref{item:pingpong_12_bound}]
  Again due to \Cref{prop:combination_divergent}, we can define
  \[
    C_3:=\max \left\{\max_{f\in
        F}\Big(\frac{\sigma_1}{\sigma_d}(g)\Big)^2,\frac{2^{9}}{7\varepsilon^3}\right\}.
  \]
 We shall use induction to prove the following bound: every reduced
 word $g_1\cdots g_n\in \Delta$ satisfies the estimate
 \begin{align}\label{bound-12}
   \frac{\sigma_1}{\sigma_2}\left(g_1\cdots g_n\right)\geq C_3^{-n}
   \frac{\sigma_1}{\sigma_2}(g_1)\cdots
   \frac{\sigma_1}{\sigma_2}(g_n).
 \end{align}

  The bound holds trivially for $n=1$. Suppose now that we know that
  (\ref{bound-12}) holds for $n\in \mathbb{N}$. Fix a reduced word
  $g_1\ldots,g_{n}g_{n+1}\in \Delta$, such that
  $g_{n+1}\in \Gamma_{j_1}$, $g_n\in \Gamma_{j_2}$, $j_1\neq j_2$.  If
  either $g_{n+1}\in F$ or $g_1\cdots g_n\in F$, by Lemma
  \ref{gap-product-1}, we have
  that
  $$\frac{\sigma_1}{\sigma_2}(g_1\cdots g_ng_{n+1})\geq
  C_3^{-1}\frac{\sigma_1}{\sigma_2}(g_1\cdots
  g_n)\frac{\sigma_1}{\sigma_2}(g_{n+1})\geq
  C_3^{-n-1}\frac{\sigma_1}{\sigma_2}(g_1)\cdots
  \frac{\sigma_1}{\sigma_2}(g_{n+1})$$ and (\ref{bound-12}) holds for
  $g_1\cdots g_ng_{n+1}\in \Delta$.

Now suppose that $h_{n}:=g_1\cdots g_n\notin F$ and $g_{n+1}\notin
F$. Consider the ball $B_{\theta}(x_{j_2})\subset U_{j_2}$ and note
that by (\ref{est-1-}) and Lemma \ref{estimate-Xi} (ii) that
$\textup{dist}(x_{j_2},\Xi_{d-1}(g_{n+1}^{-1}))\geq
\frac{7\varepsilon}{8}.$ In particular, Lemma \ref{Lipschitz} implies
that $$g_{n+1}B_{\theta}(x_{j_2})\subset
B_{\epsilon_{n+1}}(g_{n+1}x_{j_2}),$$ where
$\epsilon_{n+1}:=\frac{16\theta}{7\varepsilon}\frac{\sigma_2}{\sigma_1}(g_{n+1})$. In
addition observe
that $$d_{\mathbb{P}}\left(g_{n+1}x_{j_2},U_{j_1}\right)\leq
d_{\mathbb{P}}\left(g_{n+1}x_{j_2},\Xi_1(g_{n+1})\right)+d_{\mathbb{P}}\left(\Xi_1(g_{n+1}),U_{j_1}\right)\leq
\frac{8}{7\varepsilon}\frac{\sigma_2}{\sigma_1}(g_{n+1})+\frac{\varepsilon}{8}
< \frac{3\varepsilon}{8}.$$

Since $g_{1}\cdots g_{n}g_{n+1}\in \Delta$ is reduced and
$h_n=g_1\cdots g_n\notin F$, by Lemma \ref{estimate-Xi} (ii) we have
that
\[
  \dist{g_{n+1}x_{j_2},\Xi_{d-1}(h_{n}^{-1})}\geq
  \dist{U_{j_1},V_{j_2}}-\frac{3\varepsilon}{8}-\frac{\varepsilon}{8}\geq
  \frac{\varepsilon}{2}.
\]
By applying Lemma \ref{Lipschitz} for the action of $h_{n}$ on the
ball $B_{\epsilon_{n+1}}(g_{n+1}x_{j_2})$ and the previous estimate,
we conclude that
\[
  h_{n}B_{\epsilon_{n+1}}(g_{n+1}x_{j_2})\subset
  B_{\epsilon_{n+1}'}(h_ng_{n+1}x_{j_2}), \ \textup{where}
\]

\[
\epsilon_{n+1}':=\frac{4}{\varepsilon}\frac{\sigma_2}{\sigma_1}(h_{n})\epsilon_{n+1}=\frac{64\theta}{7\varepsilon^2}\frac{\sigma_2}{\sigma_1}(g_{n+1})\frac{\sigma_2}{\sigma_1}(h_n).
\]

At this point note that we do not necessarily know that
$h_{n}g_{n+1}\notin F$, nor that $h_ng_{n+1}\in \Delta$ has a gap of
index $1$. However, we have assumed $g_{n+1}, h_n \notin F$, so if we
write $h_{n}g_{n+1}=k_n\exp(\mu(h_ng_{n+1}))k_n'$ for some
$k_n,k_n'\in \mathsf{K}_d$, by Lemma \ref{BPS-estimate-2} (ii), we have
that
\begin{align}\label{h_ng_{n+1}-bound}
  \dgr\left((k_n')^{-1}e_d^{\perp},
    \Xi_{d-1}(g_{n+1}^{-1})\right)\leq
  \frac{\sigma_2}{\sigma_1}(g_{n+1}) \cdot
  \frac{\sqrt{d-1}}{\dist{\Xi_1(g_{n+1}),\Xi_{d-1}(h_n^{-1})}}.
\end{align}

At this point, since $g_{n+1}\notin F$, by \eqref{lower-1}, we have
$\frac{\sigma_1}{\sigma_2}(g_{n+1})>\frac{8\sqrt{d-1}}{\varepsilon^2}$. In
addition, by \Cref{estimate-Xi} we know that $\dgr\left(
    \Xi_{d-1}(h_{n}^{-1}),V_{j_2}\right)\leq \frac{\varepsilon}{8}$, hence $$\textup{dist}\left(\Xi_1(g_{n+1}),
    \Xi_{d-1}(h_{n}^{-1})\right)\geq \textup{dist}\left(U_{j_1},
    V_{j_2}\right)-\frac{\varepsilon}{8}-\frac{\varepsilon}{8}\geq \frac{3\varepsilon}{4}.$$
The previous bound and (\ref{h_ng_{n+1}-bound}) imply the bound
\[
  \dgr\left((k_n')^{-1}e_d^{\perp},
    V_{j_1}\right)\leq \dgr\left((k_n')^{-1}e_d^{\perp},
    \Xi_{d-1}(g_{n+1}^{-1})\right)+ \dgr\left(
    \Xi_{d-1}(g_{n+1}^{-1}), V_{j_1}\right)<\frac{\varepsilon}{8}+ \frac{\eps}{4}.
\]
In particular, since $x_{j_2}\in U_{j_2}$
$$\textup{dist}(x_{j_2},\mathbb{P}((k_n')^{-1}e_d^{\perp}))\geq
\textup{dist}\left(U_{j_2},V_{j_1}\right)-\frac{3\varepsilon}{8}>
\frac{\varepsilon}{2}.$$

Then, since $g_1\cdots g_ng_{n+1}B_{\theta}(x_{j_2})\subset B_{\epsilon_{n+1}'}(g_1\cdots g_{n+1}x_{j_2})$, by the previous estimate, Lemma \ref{gap-estimate1} and the inductive step, we conclude that \begin{align*} \frac{\sigma_1}{\sigma_2}(g_1\cdots g_ng_{n+1})&\geq \frac{\theta}{4\epsilon_{n+1}'}\textup{dist}(x_{j_2},\mathbb{P}((k_n')^{-1}e_d^{\perp}))\geq \frac{7\varepsilon^3}{2^{9}}\frac{\sigma_1}{\sigma_2}(g_1\cdots g_n)\frac{\sigma_1}{\sigma_2}(g_{n+1})\\  &\geq C_3^{-(n+1)}\frac{\sigma_1}{\sigma_2}(g_1)\cdots \frac{\sigma_1}{\sigma_2}(g_{n+1}).\end{align*} This concludes the proof of part \ref{item:pingpong_12_bound}.
\end{proof}

\begin{proof}[Proof of Theorem \ref{thm:pingpong_word_bound} \ref{item:pingpong_12_bound-eigen}]
Let $n\in \mathbb{N}$ be an even integer and a reduced word $\gamma_1\gamma_2\cdots \gamma_n\in \langle \Gamma_1,\Gamma_2\rangle$. Since $\gamma_1,\gamma_n\in \Gamma_1\cup \Gamma_2$ do not lie in the same semigroup, for every $m\in \mathbb{N}$ the word $(\gamma_1\cdots \gamma_n)^m\in \langle \Gamma_1,\Gamma_2\rangle$ is reduced and by Theorem \ref{thm:pingpong_word_bound} \ref{item:pingpong_1_bound} we have that 
\[
\sigma_1\left((\gamma_1\gamma_2\cdots \gamma_n)^m\right)\geq C_2^{-nm}\sigma_1(\gamma_1)^m\sigma_1(\gamma_2)^m\cdots \sigma_1(\gamma_n)^m.
\]
 Therefore, since $\ell_1(\gamma_1\gamma_2\cdots \gamma_n)=\lim_{m\rightarrow \infty}\sigma_1\left((\gamma_1\gamma_2\cdots \gamma_n)^m\right)^{\frac{1}{m}}$, we obtain the estimate: \begin{equation}\label{eigenvalue-estimate1} \ell_1\left(\gamma_1\gamma_2\cdots \gamma_n\right)\geq C_2^{-n} \sigma_1(\gamma_1)\sigma_1(\gamma_2)\cdots \sigma_1(\gamma_n).\end{equation} Using (\ref{eigenvalue-estimate1}) we finally obtain: 

\begin{align*}
  \frac{\ell_1}{\ell_2}\left(\gamma_1\gamma_2\cdots
  \gamma_n\right)
  &=\frac{\ell_1(\gamma_1\gamma_2\cdots
    \gamma_n)^2}{\ell_1(\wedge^2(\gamma_1\gamma_2\cdots
    \gamma_n))}\geq
    C_2^{-2n}\frac{\sigma_1(\gamma_1)^2\sigma_1(\gamma_2)^2\cdots
    \sigma_1(\gamma_n)^2}{\sigma_1(\wedge^2
    \gamma_1\gamma_2\cdots \gamma_n)}\\
  &\geq C_2^{-2n} \frac{\sigma_1(\gamma_1)^2\sigma_1(\gamma_2)^2\cdots
    \sigma_1(\gamma_n)^2}{\sigma_1(\wedge^2
    \gamma_1)\sigma_1(\wedge^2\gamma_2)\cdots
    \sigma_1(\wedge^2\gamma_n)}\\
  &=C_2^{-2n}\frac{\sigma_1}{\sigma_2}(\gamma_1)\frac{\sigma_1}{\sigma_2}(\gamma_2)\cdots
    \frac{\sigma_1}{\sigma_2}(\gamma_n).
\end{align*}
\end{proof}

\subsection{Consequences of \Cref{thm:pingpong_word_bound}}

Below we prove \Cref{cor:pingpong_qi_embed} and
\Cref{semigroup-Anosov}, and then use \Cref{semigroup-Anosov} to
construct an interesting example of a $1$-Anosov semigroup mentioned
in the introduction. We first make a basic observation:
\begin{proposition}
  \label{prop:free_product_faithful}
  In the context of \Cref{thm:pingpong_word_bound}, if any of the
  conditions below hold, then the group
  $\langle \Gamma_1, \Gamma_2 \rangle$ is naturally isomorphic to the
  free product $\Gamma_1 \ast \Gamma_2$:
  \begin{enumerate}[label=(\alph*)]
  \item\label{item:pingpong_forward} the ping-pong sets $U_1, U_2$ are
    disjoint,
  \item\label{item:pingpong_backward} the ping-pong sets $V_1, V_2$
    are disjoint,
  \item\label{item:pingpong_groups} $\Gamma_1$ and $\Gamma_2$ are both
    groups.
  \end{enumerate}
\end{proposition}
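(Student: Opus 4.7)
The plan is to prove that the natural surjection $\phi:\Gamma_1\ast\Gamma_2\to\langle\Gamma_1,\Gamma_2\rangle$ is injective, i.e., that every nontrivial reduced word $w=\gamma_1\cdots\gamma_n$ in the free product gives a nontrivial element of $\GL_d(\K)$. In cases (a) and (b), where the $\Gamma_i$ may be only semigroups, one also needs distinct reduced words to give distinct elements; this will follow from the same ideas together with left/right cancellation inside the ambient group $\GL_d(\K)$.

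The key preliminary observation, which I would record first, is an immediate consequence of \Cref{thm:pingpong_word_bound}~\ref{item:wordlength_bound}: if a reduced word $w=\gamma_1\cdots\gamma_n$ has $\gamma_1$ and $\gamma_n$ lying in \emph{different} semigroups, then every power $w^m$ is again reduced, of length $mn$, so $\sigma_1/\sigma_2(w^m)\ge C_1\lambda^{mn}\to\infty$; in particular $w^m\ne I_d$ for large $m$, so $w$ has infinite order and is nontrivial. In all three cases of the proposition, this disposes of words whose two extreme letters are in different semigroups, and reduces the problem to the situation $\gamma_1,\gamma_n\in\Gamma_i$ for the same $i$.

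For case (c), I would induct on $n$. Since $\Gamma_i$ is a group, we may form the conjugate $w'=\gamma_1^{-1}w\gamma_1$, which in the free product is $\gamma_2\cdots\gamma_n\gamma_1$; note that $w=I_d$ iff $w'=I_d$. If $\gamma_n\gamma_1\ne I_d$ inside $\Gamma_i$, then $w'$ is a reduced word of length $n-1$ whose endpoints $\gamma_2$ and $\gamma_n\gamma_1$ lie in different semigroups, and we apply the preliminary observation. Otherwise $w'$ collapses to the reduced word $\gamma_2\cdots\gamma_{n-1}$ of length $n-2$, handled by the inductive hypothesis. The base cases $n=1,2$ are immediate, since $n=2$ cannot have endpoints in the same semigroup by reducedness.

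For cases (a) and (b) I would invoke the classical ping-pong argument. In case (a), suppose $w$ has both endpoints in $\Gamma_i$ and pick $x\in U_j$ with $j\ne i$; iterating the inclusions $\gamma\overline{U_\ell}\subset U_{3-\ell}$ along the letters of $w$ yields $wx\in U_i$, which is disjoint from $U_j\ni x$, so $wx\ne x$ and therefore $w\ne I_d$. In case (b), I would run the dual argument on the matrix inverse $w^{-1}\in\GL_d(\K)$ (which is well-defined regardless of whether the $\Gamma_i$ are groups), using the ping-pong inclusions $\gamma^{-1}\overline{V_\ell}\subset V_{3-\ell}$ to send a chosen $y\in V_j$ into the disjoint set $V_i$. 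I do not anticipate any real obstacle here: the entire proposition is a direct consequence of \Cref{thm:pingpong_word_bound} together with classical ping-pong bookkeeping, the mildest subtlety being the semigroup uniqueness statement in (a) and (b).
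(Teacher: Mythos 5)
Your proof is correct but, for case~(c), takes a genuinely different route from the paper. The paper combines parts \ref{item:wordlength_bound} and \ref{item:pingpong_12_bound} of Theorem~\ref{thm:pingpong_word_bound} to conclude that the kernel of $\Gamma_1 \ast \Gamma_2 \to \GL_d(\K)$ is a \emph{finite} normal subgroup (part \ref{item:wordlength_bound} bounds the word length of any kernel element, and part \ref{item:pingpong_12_bound} together with $1$-divergence confines each letter to a finite set), and then observes that a free product of two groups that embed faithfully in $\GL_d(\K)$ cannot have a nontrivial finite normal subgroup inside the kernel. Your proof uses only part \ref{item:wordlength_bound}, via a direct induction on word length: a reduced word whose endpoints lie in different free factors has $\frac{\sigma_1}{\sigma_2}(w^m)\to\infty$ and hence infinite order, and a word with endpoints in the same factor $\Gamma_i$ is handled by conjugating by $\gamma_1^{-1}$, which either produces a reduced word with endpoints in different factors or a strictly shorter reduced word. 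This is more elementary---it avoids part \ref{item:pingpong_12_bound} entirely and needs no structure theory of free products---while the paper's argument is terser. For cases~(a) and~(b), both you and the paper invoke the ping-pong lemma for semigroups; you correctly flag that showing reduced words are non-identity is not by itself the full semigroup statement, and your appeal to ``left/right cancellation in $\GL_d(\K)$'' stays at the same level of informality as the paper's one-line citation, so this is not a gap relative to the paper.
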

\begin{proof}
  If either \ref{item:pingpong_forward} or
  \ref{item:pingpong_backward} holds, then the result is an immediate
  consequence of the ping-pong lemma for semigroups. If
  \ref{item:pingpong_groups} holds, then combining
  \ref{item:wordlength_bound} and \ref{item:pingpong_12_bound} in
  \Cref{thm:pingpong_word_bound} we see that the kernel of the map
  $\rho:\Gamma_1 \ast \Gamma_2 \to \GL_d(\K)$ is a finite normal
  subgroup of the free product $\Gamma_1 \ast \Gamma_2$. Since
  $\Gamma_1$ and $\Gamma_2$ are both included into $\GL_d(\K)$, the
  kernel of $\rho$ is trivial and
  $\Gamma_1 \ast \Gamma_2 \to \langle \Gamma_1, \Gamma_2 \rangle$ is
  an isomorphism.
\end{proof}

\begin{remark}
  The conclusion of \Cref{prop:free_product_faithful} can fail if all
  three of \ref{item:pingpong_forward}, \ref{item:pingpong_backward},
  and \ref{item:pingpong_groups} do not hold. For a simple example,
  consider the semigroup $\Gamma$ generated by a nontrivial loxodromic
  element $g$ in $\SL(2, \Real)$, and take
  $\Gamma_1 = \Gamma_2 = \Gamma$. Then, if $U_1 = U_2 = U$ is a small
  neighborhood of the attracting point of $g$ in $\P(\Real^2)$, and
  $V_1 = V_2 = V$ is a small neighborhood of the repelling point in
  $\Gr_1(\Real^2) = \P(\Real^2)$, the ping-pong conditions are
  satisfied, but clearly $\Gamma_1 \ast \Gamma_2$ is not isomorphic to
  $\langle \Gamma_1, \Gamma_2 \rangle = \Gamma$.
\end{remark}

\begin{proof}[Proof of Corollary~\ref{cor:pingpong_qi_embed}]

  Fix finite generating sets for $\Gamma_1, \Gamma_2$, which induce a
  word metric $|\cdot|$ on $\Gamma_1, \Gamma_2$, and the abstract free
  product $\Gamma_1 * \Gamma_2$.  We have assumed that
  $\Gamma_1, \Gamma_2$ are quasi-isometrically embedded, which means
  that there is a uniform constant $L > 0$ such that
  \begin{equation}
    \label{eq:qi_assumption}
    \sigma_1(\gamma_i) \ge L |\gamma_i|
  \end{equation}
  for every $\gamma_i \in \Gamma_1 \cup \Gamma_2$.

  Let $\gamma_1 \cdots \gamma_n$ be a reduced word in
  $\langle \Gamma_1, \Gamma_2 \rangle$. By
  \Cref{thm:pingpong_word_bound} \ref{item:pingpong_1_bound}, for a
  uniform constant $c_1 > 0$ we also have the inequality
  \begin{equation}
    \label{eq:s1_inequality}
    \sigma_1(\gamma_1 \cdots \gamma_n) \ge c_1^n \sigma_1(\gamma_1)
    \cdots \sigma_1(\gamma_n).
  \end{equation}
  Without loss of generality we can assume that $c_1 < 1$, so
  $\log c_1 < 0$.

  Let $g = \gamma_1 \cdots \gamma_n$, and let $\ell = |g| / n$. We
  consider two cases.
  \begin{description}
  \item[Case 1: $\ell \ge \frac{-2}{L}\log c_1$] In this case
    \eqref{eq:qi_assumption} and \eqref{eq:s1_inequality} give us
    \begin{align*}
      \log(\sigma_1(g))
      &\ge n\log(c_1) + \sum_{i=1}^n L|\gamma_i|\\
      &= \frac{|g|}{\ell}\log(c_1) + L|g|\\
      &= |g|\Big(\frac{1}{\ell}\log(c_1) + L\Big)\\
      &\ge |g|\frac{L}{2}.
    \end{align*}
  \item[Case 2: $\ell \le \frac{-2}{L}\log c_1$] In this case, setting
    $L' = \frac{L}{-2\log c_1} > 0$, we have $1/\ell \ge L'$ and thus
    \[
      n = |g|/\ell \ge L'|g|.
    \]
    Then \Cref{thm:pingpong_word_bound} \ref{item:wordlength_bound}
    implies that for some uniform $\lambda > 1$ and $c_4 > 0$ we have
    \[
      \log\frac{\sigma_1}{\sigma_2}(g) \ge \log(\lambda)L'c_4|g|.
    \] 
    
  \end{description}
\end{proof}

\begin{proof}[Proof of Corollary \ref{semigroup-Anosov}] By assumption, there is $V_0\in \mathsf{Gr}_{d-1}(\mathbb{K}^d)$ with $\Lambda_1(\Gamma)\subset \mathbb{P}(\mathbb{K}^d)\minus \mathbb{P}(V_0)$. Since the limit set of the dual group $\Gamma^{\ast}$ is contained in an affine chart of $\mathbb{P}(\mathbb{K}^d)$, the complement of $\bigcup_{W\in \Lambda_{d-1}(\Gamma)}\mathbb{P}(W)$ is a non-empty closed subset of $\mathbb{P}(\mathbb{K}^d)$. In particular, we may choose $[v_0]\in \mathbb{P}(\mathbb{K}^d)\minus \mathbb{P}(V_0)$ and $\varepsilon>0$ with 
\[
\textup{dist}\left([v_0],\mathbb{P}(W)\right)\geq 6\varepsilon, \ \forall \ W\in \Lambda_{d-1}(\Gamma).
\]

Now let $g\in \mathsf{GL}_d(\mathbb{K})$ be a $1$-proximal matrix with
attracting fixed point in $[v_0]\in \mathbb{P}(\mathbb{K}^d)$ and
repelling hyperplane $V_0\in \mathsf{Gr}_{d-1}(\mathbb{K})$. By the
previous choices, we may pass to a finite-index subgroup
$\Gamma'<\Gamma$ and choose $r>0$ large enough such that for every
$n\in \mathbb{N}$,
\begin{align*} \left(\Gamma' \minus
  \{I_d\}\right)B_{\varepsilon}([v_0])\subset
  N_{\varepsilon}(\Lambda_{1}(\Gamma)),
  &\ g^{rn}N_{\varepsilon}\left(\Lambda_1(\Gamma)\right)\subset
    B_{\varepsilon}([v_0])\\
  \left(\Gamma' \minus
  \{I_d\}\right)N_{\varepsilon}(\mathbb{P}(V_0))\subset
  N_{\varepsilon}(\Lambda_{d-1}(\Gamma)),
  &\ g^{-rn}N_{\varepsilon}(\Lambda_{d-1}(\Gamma))\subset
    N_{\varepsilon}(\mathbb{P}(V_0)).
\end{align*}
By the previous inclusions we deduce that the semigroups
$\Gamma'<\Gamma$ and $\{g^{rm}:m\geq 0\}$ are in ping-pong position in
the sense of Definition \ref{pingpong-position}; moreover, the
relevant ping-pong sets are also disjoint by construction. Then by
\Cref{prop:free_product_faithful} the semigroup
$\langle \Gamma', g^r\rangle < \mathsf{GL}_d(\mathbb{K})$ is
isomorphic to the free product of $\Gamma'$ and
$H = \{g^{rm}:m\geq 0\}$. Then passing to a further finite-index
subgroup $\Gamma'' < \Gamma'$, and increasing $r$ if necessary, we can
use Theorem \ref{thm:pingpong_word_bound} \ref{item:pingpong_12_bound}
to conclude that the semigroup is $1$-Anosov (alternatively, one may
also apply \Cref{thm:pingpong_word_bound} \ref{item:wordlength_bound}
as in the proof of \Cref{cor:qi_free_products} to see that the free
product of $\Gamma'$ and $H$ is already $1$-Anosov).
\end{proof}

\subsubsection{A $1$-Anosov semigroup which does not generate a
  $1$-Anosov subgroup}
\label{sec:anosov_semigroup_not_subgroup}

Let $\Gamma < \GL_d(\Real)$ be a discrete Gromov-hyperbolic group
which acts properly discontinuously and cocompactly on a properly
convex domain $\Omega \subset \P(\Real^d)$ (see
\Cref{sec:properly_convex_domains} below). Then the inclusion
$\Gamma \hookrightarrow \GL_d(\Real)$ is $1$-Anosov by
\cite[Prop. 6.1]{GW}. By \Cref{semigroup-Anosov}, we can replace
$\Gamma$ with some finite-index subgroup of itself and find some
$w \in \GL_d(\Real)$ so that the semigroup
$\langle \Gamma, w \rangle \subset \GL_d(\Real)$ is $1$-Anosov and
isomorphic to the free product of $\Gamma$ with the semigroup
$\{w^n:n\geq 0\}$. However, the group generated by
$\Gamma \cup\{w^{\pm1}\}$ fails to be $1$-Anosov, since $\Gamma$
cannot be embedded as an infinite index subgroup of any $1$-Anosov
subgroup of $\mathsf{GL}_d(\mathbb{R})$ (see \cite[Thm. 1.5 \&
1.7]{CT}).

\section{Free products of quasi-isometrically embedded discrete groups}

In this section we prove Theorem \ref{discrete-freeprod-1} and its
consequence Corollary \ref{cor:qi_free_products}.

\subsection{Properly convex
  domains}\label{sec:properly_convex_domains}

Our proof of \Cref{discrete-freeprod-1} requires us to work with
\emph{properly convex domains} in real projective space. Recall that
an open subset $\Omega \subset \P(\Real^d)$ is a properly convex
domain if its closure is a bounded convex subset of some affine chart
in $\P(\Real^d)$. Equivalently, $\Omega$ is the projectivization of a
\emph{properly convex cone} $\tilde{\Omega}$ in $\Real^d$, i.e. a
convex subset of $\Real^d$ invariant under multiplication by positive
scalars whose closure does not contain any $k$-plane in $\Real^d$ for
$k > 0$.

If $\Omega$ is a properly convex domain, then the
set of elements $g \in \GL_d(\Real)$ preserving $\Omega$ is called the
\emph{automorphism group} and denoted $\Aut(\Omega)$. When viewed as a
subgroup of $\PGL_d(\Real)$ by projectivization, $\Aut(\Omega)$ acts
properly on $\Omega$.

We need the following basic result on sequences in $\Aut(\Omega)$:
\begin{lemma}
  \label{lem:xi_avoids_compact}
  Let $K$ be a compact subset of a properly convex domain
  $\Omega \subset \P(\Real^d)$. There exists $\eps > 0$ and $L > 0$ so
  that whenever $g \in \Aut(\Omega)$ satisfies
  $\frac{\sigma_k}{\sigma_{k+1}}(g) > L$, then
  \[
    \inf_{x \in K}\dist{x, \P(\Xi_k(g))} > \eps.
  \]
\end{lemma}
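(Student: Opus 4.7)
The plan is to argue by contradiction, using properness of the $\Aut(\Omega)$-action on $\Omega$. Suppose the conclusion fails: there is a sequence $g_n \in \Aut(\Omega)$ with $\sigma_k(g_n)/\sigma_{k+1}(g_n) \to \infty$ and points $x_n \in K$ with $\dpr(x_n, \P(\Xi_k(g_n))) \to 0$. After extraction, $x_n \to x_\infty \in K \subset \Omega$, $\Xi_k(g_n) \to X$ in $\Gr_k(\K^d)$, and $\Xi_{d-k}(g_n^{-1}) \to Y$ in $\Gr_{d-k}(\K^d)$, with $X \oplus Y = \K^d$. Passing the distance hypothesis to the limit gives $x_\infty \in \P(X) \cap \Omega$, so the subspace $\P(X)$ meets the open set $\Omega$.

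The key technical input is a higher-rank analog of \Cref{lem:BPS-estimate-1}: for every $g \in \GL_d(\K)$ and every $x \in \P(\K^d) \setminus \P(\Xi_{d-k}(g^{-1}))$,
\[
  \dpr\!\left(gx,\; \P(\Xi_k(g))\right) \;\le\; \frac{\sigma_{k+1}(g)}{\sigma_k(g)} \cdot \frac{1}{\dpr\!\left(x,\; \P(\Xi_{d-k}(g^{-1}))\right)},
\]
proved by the identical KAK-coordinate computation as \Cref{lem:BPS-estimate-1}, but separating the top $k$ and bottom $d-k$ singular directions. After a further extraction, normalize so that $g_n/\sigma_1(g_n) \to T$ in $\End(\K^d)$, where $T$ has rank $r \in \{1,\ldots,k\}$, with image $X' = \lim_n \Xi_r(g_n) \subseteq X$ and kernel $Y' = \lim_n \Xi_{d-r}(g_n^{-1}) \supseteq Y$. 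For any $o \in \Omega \setminus \P(Y')$, one has $g_n o \to To$ in $\P(\K^d)$, and since $g_n o \in \Omega$ we have $To \in \overline{\Omega} \cap \P(X')$.

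The goal is to exhibit $o \in \Omega \setminus \P(Y')$ with $To \in \Omega$: given such an $o$, the sequence $(g_n o)$ converges to a point of $\Omega$, contradicting properness of the $\Aut(\Omega)$-action together with the unboundedness of $(g_n)$ in $\GL_d(\K)$. When $r \ge 2$, the restriction of $T$ to $\P(\K^d) \setminus \P(Y')$ is an open linear submersion onto $\P(X')$, so $T(\Omega \setminus \P(Y'))$ is a nonempty open subset of $\P(X')$ contained in $\overline{\Omega} \cap \P(X')$; proper convexity of $\Omega$ then places this open subset in the relative interior $\Omega \cap \P(X')$, yielding the desired $o$. The main obstacle I anticipate is the degenerate case $r = 1$, where $\P(X')$ is a single point and the openness argument collapses. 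I would treat this case by induction on $k$: the base case $k = 1$ is immediate, since then $\P(X) = \P(X') = \{x_\infty\} \subset \Omega$ already forces $To = x_\infty \in \Omega$, and for $k \ge 2$ one combines the inductive hypothesis applied to the index-$1$ Cartan attractors $\Xi_1(g_n)$ with the strict inclusion $\P(X') \subsetneq \P(X)$ and the fact that $x_\infty \in \P(X) \cap \Omega$ to locate a point $o$ whose $T$-image lies in $\Omega$.
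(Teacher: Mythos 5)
Your extraction of limits and the auxiliary matrix limit $T := \lim_n g_n/\sigma_1(g_n)$ are both reasonable, but the step where you exhibit $o \in \Omega \setminus \P(Y')$ with $To \in \Omega$ has a gap that your proposed patches do not close, and the gap is not confined to the rank-$1$ case you flag. Even for $r \geq 2$, openness of $[T]$ gives that $T(\Omega \setminus \P(Y'))$ is a nonempty open subset of $\P(X')$ contained in $\overline{\Omega}$, but the inference ``this open set therefore lies in $\Omega \cap \P(X')$'' is false unless you already know $\P(X') \cap \Omega \neq \emptyset$: in general $\overline{\Omega} \cap \P(X')$ can be a face of $\partial\Omega$. (For instance, take $\Omega = \{[x_1:x_2:x_3] : x_i > 0\}$ in $\P(\Real^3)$ and $\P(X') = \{x_3 = 0\}$; then $\overline{\Omega} \cap \P(X')$ is a closed arc whose relative interior lies entirely in $\partial\Omega$.) Since $X' = \operatorname{im} T$ can be a proper subspace of $X$ whenever $r < k$, the one fact at hand---namely $x_\infty \in \P(X) \cap \Omega$---does not supply $\P(X') \cap \Omega \neq \emptyset$. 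The induction you sketch for $r = 1$ also does not repair this: the inductive hypothesis applied to $\Xi_1(g_n)$ only shows the limit point $\P(X')$ stays away from $K$, not that it lies in $\Omega$, and when $r = 1$ the image $To$ is the single point $\P(X')$ independently of $o$, so the argument dead-ends if $\P(X') \notin \Omega$.

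The paper avoids the issue by reversing the direction of the contradiction. Rather than trying to push a point of $\Omega$ forward to a limit inside $\Omega$, it chooses a $k$-plane $V_k'$ transverse to $V_{d-k} = \lim_n \Xi_{d-k}(g_n^{-1})$ with $\P(V_k') \cap \overline{\Omega} = \emptyset$ (possible because $\Omega$ lies inside an affine chart), observes that $g_n\P(V_k') \to \P(V_k)$ in Hausdorff distance, and then uses $\P(V_k) \cap K \neq \emptyset$ together with a uniform neighborhood $N_\delta(K) \subset \Omega$ to produce $x_n \in \P(V_k')$ with $g_nx_n \in \Omega$. Since $x_n \notin \overline{\Omega}$, this contradicts $g_n \in \Aut(\Omega)$. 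That argument is insensitive to $\operatorname{rank}(T)$ and to whether $\P(X')$ meets $\Omega$. If you want to salvage your version, replace the attempt to land $To$ inside $\Omega$ with a ``test plane'' disjoint from $\overline{\Omega}$, so that the contradiction comes from a point outside $\overline{\Omega}$ being mapped into $\Omega$ rather than the other way around.
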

\begin{proof}
  Suppose the statement is false. Then there is an index $k$ and a
  sequence $g_n \in \Aut(\Omega)$ with
  $\frac{\sigma_k}{\sigma_{k+1}}(g_n) \to \infty$, satisfying
  \[
    \inf_{x \in K}\dist{x, \P(\Xi_k(g_n))} \to 0.
  \]
  After extracting a subsequence, $(\Xi_k(g_n))_{n\in \mathbb{N}}$ converges to some
  $k$-plane $V_k$, with $\P(V_k) \cap K \ne \emptyset$. After further
  extraction, $(\Xi_{d-k}(g_n^{-1}))_{n\in \mathbb{N}}$ also converges to some
  $(d-k)$-plane $V_{d-k}$.

  Now, if $V_k'$ is any $k$-plane transverse to $V_{d-k}$, we must
  have $g_n\P(V_k') \to \P(V_k)$ with respect to Hausdorff distance on
  $\P(\Real^d)$. Since $\Omega$ is properly convex, we can choose such
  a $V_k'$ so that
  $\P(V_k') \subset \P(\Real^d) \minus \overline{\Omega}$.

  Let $r > 0$ be such that $N_r(K) \subset \Omega$. Since $\P(V_k)$
  intersects $K$, for all sufficiently large $n$ there is some
  $x_n \in \P(V_k')$ with $g_nx_n \in N_r(K)$, hence
  $g_nx_n \in \Omega$. But since $x_n \notin \Omega$ this contradicts
  the fact that $g_n \in \Aut(\Omega)$.
\end{proof}

\subsection{Establishing the theorem}

The proposition below is the key step in the proof of
\Cref{discrete-freeprod-1}.
\begin{proposition}
  \label{prop:pingpong_convex_norm_bound}
  Let $\eta \in (0, 1)$, let $\Omega$ be a properly convex domain in
  $\P(\Real^d)$, and let $\Gamma$ be a subgroup of
  $\Aut(\Omega) < \GL_d(\Real)$ satisfying
  $\sigma_1(g)\sigma_d(g) = 1$ for all $g \in \Gamma$. There exists:
  \begin{itemize}
  \item a compact neighborhood $\mathcal{K} \subset \GL_d(\Real)$ of
    the identity;
  \item a pair of inclusions
    $\rho_1, \rho_2:\Gamma \hookrightarrow \GL_{d+1}(\Real)$, and
  \item a pair of subsets
    $\mathcal{C}_1, \mathcal{C}_2 \subset \P(\Real^{d+1})$ with
    $\overline{\mc{C}_1} \cap \overline{\mc{C}_2} = \emptyset$
  \end{itemize}
  such that, for any $g \in \Gamma \minus \mathcal{K}$, the following
  two conditions hold:
  \begin{enumerate}[label=(\alph*)]
  \item\label{item:pingpong_convex_sets} for $i = 1,2$, we have
    $\rho_i(g)\mathcal{C}_{3-i} \subset \mathcal{C}_i$, and
  \item\label{item:convex_set_norm_estimate} for $i = 1, 2$ and any
    $[v] \in \mc{C}_{3-i}$, we have
    \[
      ||\rho_i(g)v|| \ge \sigma_1(g)^{1 - \eta}||v||.
    \]
  \end{enumerate}
\end{proposition}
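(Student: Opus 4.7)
The plan is to perform a ``doubling'' construction in the spirit of Danciger-Gu\'eritaud-Kassel~\cite{DGK24}: embed two copies of the action of $\Gamma$ on $\overline{\Omega}\subset \P(\Real^d)$ into $\P(\Real^{d+1})$ along two transverse $d$-dimensional projective subspaces, take $\rho_1,\rho_2$ to be the two corresponding inclusions, and place $\mc{C}_1,\mc{C}_2$ as thin neighborhoods of the two embedded copies. Concretely, realize $\Real^{d+1}=\Real^d\oplus\Real$ and choose a linear functional $\alpha\in(\Real^d)^*$ in the interior of the dual convex cone $\tilde{\Omega}^*$, so that $\alpha>0$ on $\overline{\tilde{\Omega}}\minus\{0\}$. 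Set
\[
  T:=\begin{pmatrix} I_d & 0 \\ \alpha & 1\end{pmatrix}\in \GL_{d+1}(\Real), \qquad \rho_1(g):=\mathrm{diag}(g,1), \qquad \rho_2(g):=T\rho_1(g)T^{-1}.
\]
Both $\rho_i$ are visibly injective since the top-left $d\times d$ block recovers $g$.

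For the auxiliary data, use \Cref{lem:xi_avoids_compact} to fix a compact set $K\subset \Omega$ with nonempty interior and a threshold $L>0$ such that $\dist{K,\P(\Xi_{d-1}(g^{-1}))}>\eps$ for all $g\in \Gamma$ with $\sigma_1/\sigma_d(g)>L$; let $\mc{K}\subset\GL_d(\Real)$ be a compact neighborhood of the identity containing all elements of $\Gamma$ below this threshold. Define $\mc{C}_1$ to be an open neighborhood in $\P(\Real^{d+1})$ of the image of a suitable compact region of $\Omega$ under $y\mapsto [y,0]$, and $\mc{C}_2$ an analogous neighborhood of the tilted image under $y\mapsto [y,\alpha(y)]$; strict positivity of $\alpha$ on $\overline{\tilde{\Omega}}\minus\{0\}$ gives a uniform lower bound on the $x_{d+1}$-component of unit representatives of points in $\overline{\mc{C}_2}$, while points of $\overline{\mc{C}_1}$ have vanishing $x_{d+1}$-component, so that by choosing the neighborhood widths thin enough we obtain $\overline{\mc{C}_1}\cap \overline{\mc{C}_2}=\emptyset$.

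Verification of both claims reduces to two direct computations: $\rho_1(g)(y,\alpha(y))=(gy,\alpha(y))$ and $\rho_2(g)(y,0)=(gy,\alpha(gy)-\alpha(y))$. Combining the uniform separation $\dist{[y],\P(\Xi_{d-1}(g^{-1}))}\ge \eps$ for $[y]\in K$ and $g\in\Gamma\minus\mc{K}$ with \Cref{lem:BPS-estimate-1} yields $\|gy\|\ge c\sigma_1(g)\|y\|$ for a uniform $c>0$. In the first case, $[gy,\alpha(y)]$ is projectively close to $[gy,0]$, which lies in $g(K)\oplus 0\subset \mc{C}_1$, with error of order $\alpha(y)/\|gy\|\to 0$; in the second case, $[gy,\alpha(gy)-\alpha(y)]$ is projectively close to $T[gy,0]\in T(g(K)\oplus 0)\subset \mc{C}_2$ with error of order $\alpha(y)/\|gy\|\to 0$. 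The norm estimate follows from the same chain of inequalities: $\|\rho_i(g)v\|\ge \|gy\|\ge c\sigma_1(g)\|y\|$, and $\|y\|$ is comparable to $\|v\|$ up to a bounded factor involving $\|\alpha\|$, so $\|\rho_i(g)v\|\ge c'\sigma_1(g)\|v\|$; the constant $c'$ is then absorbed into the loss $\sigma_1(g)^{-\eta}$ by possibly enlarging $\mc{K}$ so that $\sigma_1(g)^{\eta}\ge 1/c'$ outside $\mc{K}$, using $\sigma_1(g)\to\infty$ (a consequence of the normalization $\sigma_1(g)\sigma_d(g)=1$ together with discreteness arguments).

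The main technical obstacle is that the ping-pong inclusion must hold uniformly: the ``target'' set $\mc{C}_i$ must simultaneously contain every image $g(K)$ translated appropriately for $g\in \Gamma\minus\mc{K}$, yet the ``source'' set $\mc{C}_{3-i}$ must avoid neighborhoods of the repelling directions $\Xi_{d-1}(g^{-1})$, while both sets must remain disjoint. Reconciling these three requirements forces a careful choice of the compact $K$ and of the thickness of $\mc{C}_1,\mc{C}_2$ in the $x_{d+1}$-direction; the strict positivity of $\alpha$ on $\overline{\tilde{\Omega}}\minus\{0\}$ and the normalization $\sigma_1(g)\sigma_d(g)=1$ are the structural ingredients that keep these simultaneous constraints compatible.
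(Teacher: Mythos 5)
Your construction is in the same ``doubling'' spirit as the paper's (diagonal inclusion $\rho_1=\mathrm{diag}(g,1)$, conjugate $\rho_2$, two separated sets carrying the two copies of the dynamics), but the verification has two genuine gaps.

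\textbf{The norm estimate is too strong.} You claim that $\dist{[y],\P(\Xi_{d-1}(g^{-1}))}\ge\eps$ for $[y]\in K$ and all $g\in\Gamma\minus\mc K$, and then conclude $\|gy\|\ge c\,\sigma_1(g)\|y\|$. But $\Xi_{d-1}(g^{-1})$ is only meaningful when $g$ has a gap of index $1$, and the uniform $\eps$-separation from $K$ (via \Cref{lem:xi_avoids_compact}) only kicks in once $\frac{\sigma_1}{\sigma_2}(g)>L$. Nothing in the hypotheses forces this: $\Gamma$ is a discrete subgroup of $\Aut(\Omega)$ normalized by $\sigma_1\sigma_d=1$, so $\sigma_1(g)\to\infty$ outside compacts, but the gap of index $1$ can stay bounded. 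This is exactly what \Cref{freeproduct-estimate-1} is for: it produces a $g$-dependent index $k$ with $\frac{\sigma_k}{\sigma_{k+1}}(g)\ge\sigma_1(g)^{\eps}$ and $\sigma_k(g)\ge\sigma_1(g)^{1-d\eps}$, and one then applies \Cref{lem:xi_avoids_compact} at that index to get $\|\rho_1(g)v\|\ge\eps\,\sigma_k(g)\|v\|\ge\eps\,\sigma_1(g)^{1-\eta/2}\|v\|$. The resulting power loss is not a convenience but the reason the statement reads $\sigma_1(g)^{1-\eta}$ rather than $\sigma_1(g)$. Without invoking that pigeonhole lemma the estimate in part (b) does not follow, and the threshold you build $\mc K$ from is never triggered.

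\textbf{The ping-pong inclusion is only checked on the cores.} Your ``direct computations'' $\rho_1(g)(y,\alpha(y))=(gy,\alpha(y))$ and $\rho_2(g)(y,0)=(gy,\alpha(gy)-\alpha(y))$ only track points lying exactly on the tilted copy $\{[y,\alpha(y)]\}$ or the flat copy $\{[y,0]\}$. Part (a) requires mapping the \emph{open neighborhoods} $\mc C_{3-i}$ into $\mc C_i$, and a point of $\mc C_1$ has the form $[(y+\delta_V,\delta_U)]$ with a perturbation $\delta_V$ in the $V$-direction. Then $\rho_2(g)$ sends this to $[(g(y+\delta_V),\,\alpha(g(y+\delta_V))+\delta_U-\alpha(y+\delta_V))]$, and if $[y+\delta_V]\notin\overline\Omega$ you have no control on where $[g(y+\delta_V)]$ lands -- it need not stay near the core of $\mc C_2$. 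The paper's proof does not try to estimate this by hand; it invokes \cite[Prop.\ 4.2--4.3]{DGK24} to produce a nested family of $\rho_1(\Gamma)$-invariant properly convex domains $\Omega_t$ with $\bigcup_t\Omega_t=C$, so that $\overline{\mc C_2}\subset\Omega_t$ for some $t$, and then uses properness of the action on $\Omega_t$ together with the fact that the full orbital limit set lies in $\overline\Omega\subset\mc C_1$. Your shear $T$ plays roughly the role of the paper's proximal element $w$ and is a harmless variation, but without some invariant enlarged domain (or an equivalent device) the inclusion $\rho_i(g)\mc C_{3-i}\subset\mc C_i$ for the full open sets is not established.
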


To prove the proposition we need one more technical estimate.
\begin{lemma}\label{freeproduct-estimate-1} Let $g\in \GL_d(\K)$ satisfy
  $\sigma_d(g)=\sigma_1(g)^{-1}$, $d\geq 4$ and
  $0<\epsilon<\frac{2}{d-1}$. There exists $k\in \{1,\ldots, d-1\}$
  such that
  \[
    \sigma_k(g)\geq \sigma_1(g)^{1-d\epsilon}, \
    \frac{\sigma_k}{\sigma_{k+1}}(g)\geq \sigma_1(g)^{\epsilon}.
  \]
\end{lemma}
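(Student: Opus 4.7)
The plan is to normalize by passing to logarithms base $\sigma_1(g)$ and run a direct pigeonhole/telescoping argument. First I would handle the trivial case: since $\sigma_d(g) = \sigma_1(g)^{-1}$ and $\sigma_1(g) \ge \sigma_d(g)$, we have $\sigma_1(g) \ge 1$, and if $\sigma_1(g) = 1$ then all singular values equal $1$ and any $k$ works. So assume $\sigma_1(g) > 1$ and set $s_i := \log \sigma_i(g) / \log \sigma_1(g)$, so that $1 = s_1 \ge s_2 \ge \cdots \ge s_d = -1$. The conclusion we seek is the existence of $k \in \{1, \ldots, d-1\}$ with $s_k \ge 1 - d\epsilon$ and $s_k - s_{k+1} \ge \epsilon$.

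I would argue by contradiction, assuming that for every $k \in \{1,\ldots,d-1\}$ with $s_k \ge 1 - d\epsilon$ one has $s_k - s_{k+1} < \epsilon$. Let $m$ be the largest index in $\{1,\ldots,d\}$ with $s_m \ge 1 - d\epsilon$; this is well-defined since $s_1 = 1$. The proof then splits into two cases depending on whether $m = d$ or $m < d$.

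In the first case $m = d$, every index $1,\ldots,d-1$ satisfies $s_k \ge 1 - d\epsilon$, so by the failing-gap assumption $s_k - s_{k+1} < \epsilon$ for all these $k$. Telescoping over $k = 1,\ldots,d-1$ yields
\[
2 = s_1 - s_d < (d-1)\epsilon,
\]
contradicting the hypothesis $\epsilon < 2/(d-1)$. In the second case $m < d$, the same telescoping argument over $k = 1,\ldots,m-1$ gives $s_m \ge 1 - (m-1)\epsilon$, and then the failing-gap assumption applied at $k = m$ yields $s_{m+1} > s_m - \epsilon \ge 1 - m\epsilon$. But $m < d$ forces $1 - m\epsilon > 1 - d\epsilon$, while by maximality of $m$ we have $s_{m+1} < 1 - d\epsilon$, a contradiction.

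There is no substantive obstacle here; the content of the lemma is really just a pigeonhole statement about $d-1$ gaps whose sum is controlled, and the only thing to watch is the cheap edge case $m = 1$ (where the telescope is empty and one uses $s_m = 1$ directly) and the trivial case $\sigma_1(g) = 1$. The hypothesis $\epsilon < 2/(d-1)$ is used exactly once, in the case $m = d$, which explains its appearance in the statement.
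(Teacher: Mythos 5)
Your proof is correct and relies on the same core idea as the paper's: pigeonhole on the gaps $s_k - s_{k+1}$ (equivalently $\sigma_k/\sigma_{k+1}$), whose sum is $2$, using $\epsilon < \frac{2}{d-1}$ to guarantee a large gap and telescoping the small gaps to control $\sigma_k$. The only difference is presentational --- the paper argues directly by taking $k$ minimal with $\frac{\sigma_k}{\sigma_{k+1}}(g) \ge \sigma_1(g)^\epsilon$ and telescoping the preceding small gaps, whereas you argue by contradiction using the maximal index $m$ with $s_m \ge 1 - d\epsilon$ --- but the underlying computation is the same.
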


\begin{proof} Note that
  $\prod_{i=1}^{d-1}\frac{\sigma_i}{\sigma_{i+1}}(g)=\sigma_1(g)^2$. Consequently,
  there is at least one index $j \in \{1,\ldots,d-1\}$ such that
  \[
    \frac{\sigma_{j}}{\sigma_{j+1}}(g)\geq
    \sigma_1(g)^{\frac{2}{p-1}}\geq \sigma_1(g)^{\epsilon}.
  \]
  
  Let $k$ be
  a minimal such $j$. If $k=1$, the lemma follows. If $k>1$, by the minimality of $k$,
  for $1\leq i< k$ we have
  $\frac{\sigma_{i}}{\sigma_{i+1}}{(g)}\leq \sigma_1(g)^{\epsilon}$
  and therefore $\sigma_i(g)\geq \sigma_1(g)^{1-(i-1)\epsilon}$. The
  lemma follows.
\end{proof}

\begin{proof}[Proof of \Cref{prop:pingpong_convex_norm_bound}]
  Identify $\Real^d$ with a subspace $V \subset \Real^{d+1}$, and let
  $U$ be an orthogonal complement to $V$ in $\Real^{d+1}$. Then
  $\GL_d(\Real)$ is identified with $\GL(V)$. We implicitly extend the
  action of $\GL(V)$ to an action on $\Real^{d+1}$ by letting $\GL(V)$
  act by the identity on $U$, and define the representation $\rho_1$
  to be the restriction of this action to $\Gamma < \GL(V)$. We define
  the set $\mc{C}_1$ by taking it to be an open neighborhood of
  $\Omega$ in $\P(\Real^{d+1})$, chosen to be small enough so that the
  closure of $\mc{C}_1$ is contained in an affine chart, and does not
  contain the point $\P(U)$.

  The representation $\rho_2$ will be a conjugate of $\rho_1$ by some
  fixed element $w \in \GL_{d+1}(\Real)$. To find $w$, first let
  $\tilde{\Omega} \subset V$ be a convex cone lifting $\Omega$.
  Observe that the sum $\tilde{\Omega} + U$ is an open (non-properly)
  convex cone $\tilde{C} \subset \Real^{d+1}$. The projectivization
  $C$ of this cone contains a copy of $\Omega$, viewed as a subset of
  $\P(V) \subset \P(\Real^{d+1})$.

  Note that since $\overline{\mc{C}_1}$ does not contain $\P(U)$, the
  set $C \minus \overline{\mc{C}_1}$ is nonempty and open. So, we can
  let $\mc{C}_2$ be an open ball in $\P(\Real^{d+1})$ such that
  $\overline{C}_2 \subset C \minus \overline{\mc{C}_1}$ Then, we can
  choose a $\{1,d\}$-proximal element $w \in \GL_{d+1}(\Real)$ whose
  attracting and repelling fixed points both lie in $\mc{C}_2$, and
  whose attracting and repelling hyperplanes both avoid
  $\overline{\mc{C}_1}$. After replacing $w$ with a sufficiently large
  power $w^n$, we can ensure that
  \begin{equation}
    \label{eq:proximal_ball_inclusion}
    w\overline{\mc{C}_1} \cup w^{-1}\overline{\mc{C}_1} \subset \mc{C}_2.
  \end{equation}
  With this choice of $w$, we define $\rho_2(g) := w\rho_1(g)w^{-1}$.
  
  It remains to show that we can choose a compact neighborhood
  $\mc{K}$ of the identity so that conditions
  \ref{item:pingpong_convex_sets} and
  \ref{item:convex_set_norm_estimate} above hold for all
  $g \in \Gamma \minus \mc{K}$. To show
  \ref{item:pingpong_convex_sets}, we employ a strategy of
  Danciger-Gu\'eritaud-Kassel: by \cite[Prop. 4.3]{DGK24}, there is a
  family $\{\Omega_t : t \ge 0\}$ of $\rho_1(\Gamma)$-invariant
  properly convex open subsets of $\P(\Real^{d+1})$ such that
  $\Omega_t \subset \Omega_{t'}$ for all $t < t'$ and
  \[
    \bigcup_{t \ge 0} \Omega_t = C, \ \ \bigcap_{t \ge 0} \Omega_t
    = \Omega.
  \]
  Since $\overline{\mc{C}_2} \subset C$, we must have
  $\overline{\mc{C}_2} \subset \Omega_t$ for some $t <
  \infty$. Moreover, \cite[Prop. 4.2]{DGK24} implies that for every
  $t \ge 0$, the \emph{full orbital limit set} of $\rho_1(\Gamma)$ in
  $\Omega_t$ does not depend on $t$, and therefore is a subset of
  $\overline{\Omega}$. (When a group $G$ acts on a properly convex
  domain $\Omega$, the full orbital limit set of $G$ in $\Omega$ is
  the set of all accumulation points in $\partial\Omega$ of $G$-orbits
  in $\Omega$.)

  Since $\Omega_t$ is properly convex, the $\rho_1$-action of $\Gamma$
  on $\Omega_t$ is proper. So, there is a compact neighborhood
  $\mc{K}$ of the identity so that for every
  $g \in \Gamma \minus \mc{K}$, we have
  \[
    \rho_1(g)\mc{C}_2 \subset \mc{C}_1.
  \]
  This implies that \ref{item:pingpong_convex_sets} holds when
  $i = 1$. The corresponding inclusion when $i = 2$ then follows from
  \eqref{eq:proximal_ball_inclusion} and the definition of $\rho_2$.

  Finally we turn to the proof that
  \ref{item:convex_set_norm_estimate} holds for some choice of
  $\mc{K}$. Note that for any $g \in \Gamma$, if $\rho_i(g)$ has a gap
  of index $k$, and we write in the Cartan decomposition $$\rho_i(g)=k_{g,i}\textup{diag}\big(\sigma_1(\rho_{1}(g)),\ldots, \sigma_{d+1}(\rho_i(g))\big)k_{g,i}', \ k_{g,i},k_{g,i}'\in \mathsf{K}_d$$ then for any $v \in \Real^{d+1}$, we have
  \begin{align}
    \label{eq:rho1_norm_inequality}
   \nonumber \big|\big|\rho_i(g)v\big|\big|&=\big|\big| \textup{diag}\big(\sigma_1(\rho_{i}(g)),\ldots, \sigma_{d+1}(\rho_i(g))\big)k_{g,i}'v\big|\big| \ge \left(\sum_{j=1}^{k}\sigma_{j}(\rho_j(g))^2 \big|\langle k_{g,i}'v,e_j\rangle \big|^2\right)^{1/2}\\
   \nonumber & \ge \sigma_k(\rho_i(g)) \left(\sum_{j=1}^{k}\big|\langle v, (k_{g,i}')^{-1}e_j\rangle \big|^2\right)^{1/2}\\
   & = \sigma_k(\rho_i(g)) \dist{[v], \P(\Xi_{d+1 -
        k}(g^{-1}))}\cdot ||v||.
  \end{align}

  Observe that, since we assume $\sigma_1(g)\sigma_d(g) = 1$ for any
  $g \in \Gamma$, the diagonal embedding
  $\rho_1:\Gamma \to \GL_{d+1}(\Real)$ satisfies
  $\sigma_1(g) = \sigma_1(\rho_1(g))$ and
  $\sigma_d(g) = \sigma_{d+1}(\rho_1(g))$. So, by applying
  \Cref{freeproduct-estimate-1} with $\epsilon = \eta / 2d$, for any
  $g \in \Gamma$, we can find some index $k$ (depending on $g$) so
  that
  \begin{equation}
    \label{eq:rho_1_lowerbounds}
    \sigma_k(\rho_1(g)) \ge \sigma_1(g)^{1 -\frac {\eta}{2}}, \ 
    \frac{\sigma_k}{\sigma_{k+1}}(\rho_1(g)) \ge
    \sigma_1(g)^{\frac{\eta}{2d}}.
  \end{equation}
  We also know that for every $g \in \Gamma$, $\rho_1(g)$ preserves
  the properly convex domain $\Omega_t$ containing the compact set
  $\overline{\mc{C}_2}$. So, by using \Cref{lem:xi_avoids_compact}
  together with the second inequality in \eqref{eq:rho_1_lowerbounds},
  we can see that for some $\eps > 0$ and some compact neighborhood
  $\mc{K}$ of the identity, any $g \in \Gamma \minus \mc{K}$ has a gap
  of some index $k$, and
  \[
    \inf_{x \in \mc{C}_2}\dist{x, \Xi_{d + 1 - k}(g^{-1})} \ge \eps.
  \]
  Combining this with \eqref{eq:rho1_norm_inequality} and the first
  inequality in \eqref{eq:rho_1_lowerbounds}, we find that for any
  $[v] \in \mc{C}_2$ and $g \in \Gamma \minus \mc{K}$, we have
  \[
    ||\rho_1(g)v|| \ge \eps \cdot \sigma_1(g)^{1 - \frac{\eta}{2}}||v||.
  \]
  By further increasing $\mc{K}$ we can also ensure that
  $\eps > \sigma_1(g)^{-\eta/2}$ for any $g \in \Gamma \minus \mc{K}$,
  which completes the proof of part
  \ref{item:convex_set_norm_estimate} when $i = 1$. The case $i = 2$
  also follows: since $\rho_2$ is a conjugate of $\rho_1$ by some
  element $w$, and $w^{-1}\mc{C}_1 \subset \mc{C}_2$, we can increase
  $\mc{K}$ further and use the $i = 1$ case to see that for any
  $[v] \in \mc{C}_1$ and any $g \in \Gamma \minus \mc{K}$ we have
  \[
    ||w^{-1}\rho_2(g) w (w^{-1}v)|| \ge \sigma_1(g)^{1 -
      \frac{\eta}{2}}||w^{-1}v||.
  \]
  This implies that for some constant $A > 0$ depending only on $w$,
  we have
  \[
    ||\rho_2(g)v|| \ge A \cdot \sigma_1(g)^{1 - \frac{\eta}{2}}||v||.
  \]
  Then by increasing $\mc{K}$ even further we get the desired bound in
  this case as well.
\end{proof}

Now we can prove the main result of this section.
\begin{proof}[Proof of \Cref{discrete-freeprod-1}]
  Consider the representations
  \begin{align*}
    \Sym_d&:\SL_{d}(\Real) \to \GL(V), \ \Sym_{d}(g)X=g^tXg  \
            X\in V,\\
    \psi_{d}&:\GL(V) \to \GL(V\otimes V), \ \psi_d(h)=h\otimes
              h^{\ast} \ h\in \GL(V)
  \end{align*}
  where $V=\Sym_{d}(\Real)$ is the vector space of symmetric
  $d\times d$ matrices. Note that both the image of $\Sym_{d}$ and its
  dual preserve the properly convex domain $D\subset \mathbb{P}(V)$ of
  positive definite symmetric matrices. The composition
  $\phi_d = \psi_d\circ \Sym_{d}$ preserves the properly convex domain
  $D':=\big\{[X\otimes Y]: [X],[Y]\in D\big\}$ of
  $\mathbb{P}(V\otimes V)$. Also, for any $g \in \SL_d(\Real)$, we
  have
  \[
    \sigma_1(\phi_d(g^{\pm 1})) = \frac{\sigma_1(g)^2}{\sigma_d(g)^2}.
  \]

  We are now in a position to apply
  \Cref{prop:pingpong_convex_norm_bound}. Set $\eta = \epsilon/2$, and
  let $\rho:\SL_d(\Real) \ast \SL_d(\Real) \to \GL_m(\Real)$ be the
  representation obtained by composing $\phi_d$ with $\rho_1$ on the
  first $\SL_d(\Real)$ factor and $\rho_2$ on the second factor. It is
  immediate from part \ref{item:pingpong_convex_sets} of the
  proposition that if $H_1, H_2 < \SL_d(\Real)$ are discrete subgroups
  with $\phi_d(H_i) \cap \mc{K} = \{\identity\}$, then the restriction
  $\rho:H_1 \ast H_2 \to \GL_m(\Real)$ is discrete and
  faithful. Moreover, if $\gamma_1 \cdots \gamma_n \in H_1 \ast H_2$
  is a reduced word, we can inductively apply part
  \ref{item:convex_set_norm_estimate} of the proposition to see that
  \[
    \frac{\sigma_1}{\sigma_m}\left(\rho(\gamma_1 \cdots
      \gamma_n)\right) \ge \prod_{i=1}^n \sigma_1(\phi_d(\gamma_i))^{1
      - \eta} = \prod_{i=1}^n
    \left(\frac{\sigma_1}{\sigma_d}(\gamma_i)\right)^{2 - \epsilon}.
  \]
  This gives us the desired estimate.
\end{proof}

Finally we turn to the proof of \Cref{cor:qi_free_products}. First, we
reproduce an argument from \cite{dt2022anosov} to prove the following:
\begin{lemma}[{See \cite[Prop. 4.1]{dt2022anosov}}]
  Let $\Gamma_1, \Gamma_2$ be finitely-generated groups, each
  containing an element with infinite order, and fix finite-index
  subgroups $\Gamma_i' < \Gamma_i$ for $i = 1,2$. Then a finite-index subgroup of the free product
  $\Gamma_1 \ast \Gamma_2$ is isomorphic to an undistorted subgroup of
  the free product $\Gamma_1' \ast \Gamma_2'$.\end{lemma}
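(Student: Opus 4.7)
The plan is to combine a Bass-Serre/Kurosh decomposition with a ping-pong embedding. Let $G = \Gamma_1 \ast \Gamma_2$ and let $T$ be the Bass-Serre tree of the canonical splitting of $G$, on which $G$ acts with trivial edge stabilizers and vertex stabilizers conjugate to $\Gamma_1$ or $\Gamma_2$. First, I would construct a finite-index normal subgroup $H \triangleleft G$ as follows: let $N_i \triangleleft \Gamma_i$ be the normal core of $\Gamma_i'$, which is a finite-index normal subgroup of $\Gamma_i$ contained in $\Gamma_i'$; then the composition $G \twoheadrightarrow (\Gamma_1/N_1) \ast (\Gamma_2/N_2) \twoheadrightarrow (\Gamma_1/N_1) \times (\Gamma_2/N_2)$ surjects onto a finite group, and I let $H$ be its kernel. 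By normality, $H \cap g\Gamma_i g^{-1} \subseteq g\Gamma_i' g^{-1}$ for every $g \in G$ and $i \in \{1,2\}$.

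Second, I apply Bass-Serre theory (equivalently, Kurosh's subgroup theorem) to the restriction of the $G$-action on $T$ to $H$. Since edge stabilizers are trivial and $[G:H]<\infty$, the quotient $H \backslash T$ is a finite graph of groups whose vertex groups are (representatives of) the intersections $H \cap g \Gamma_i g^{-1}$. This produces a decomposition
\[
H \;\cong\; F \ast \Big(\ast_{k=1}^{K} H_{1,k}\Big) \ast \Big(\ast_{l=1}^{L} H_{2,l}\Big),
\]
where $F$ is a finitely generated free group and each $H_{i,k}$ is a finite-index subgroup of a conjugate of $\Gamma_i$, contained in the corresponding conjugate of $\Gamma_i'$, and hence (via the conjugation) identifiable with a subgroup of $\Gamma_i'$.

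Third, I embed $H$ into $\Gamma_1' \ast \Gamma_2'$ via ping-pong. By hypothesis, $\Gamma_i$ contains an infinite-order element; since $[\Gamma_i : \Gamma_i'] < \infty$, some power of it yields an infinite-order element $\alpha_i \in \Gamma_i'$. I then choose sufficiently separated natural numbers $n_1, \dots, n_K$, $m_1, \dots, m_L$, and $p_1, p_2, \dots$, and define a homomorphism from the abstract free product in the Kurosh decomposition by sending each $H_{1,k}$ to $\alpha_2^{n_k} H_{1,k} \alpha_2^{-n_k}$, each $H_{2,l}$ to $\alpha_1^{m_l} H_{2,l} \alpha_1^{-m_l}$, and each free generator of $F$ to an element of the form $\alpha_1^{p_j} \alpha_2^{p_j}$. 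A standard ping-pong argument on the Bass-Serre tree of $\Gamma_1' \ast \Gamma_2'$ shows that, if the $n_k, m_l, p_j$ are chosen large enough, this yields an injective homomorphism with image a subgroup $H' \le \Gamma_1' \ast \Gamma_2'$ isomorphic to $H$.

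Finally, I verify undistortion: the word length of any $h \in H$, written in Kurosh normal form $h = h_1 h_2 \cdots h_n$, is comparable up to a multiplicative constant to the sum of the word lengths of its syllables in their respective factors. The image of $h$ in $\Gamma_1' \ast \Gamma_2'$ is, by the ping-pong setup, a reduced word whose syllable lengths are linearly bounded by the original syllable lengths (up to an additive term depending on the fixed conjugators $\alpha_2^{n_k}, \alpha_1^{m_l}$ and generators $\alpha_1^{p_j} \alpha_2^{p_j}$), and therefore its word length in $\Gamma_1' \ast \Gamma_2'$ is linearly comparable to the length of $h$ in $H$. The main obstacle I expect is the combinatorial bookkeeping needed to verify that reducedness is preserved and that the two tree structures interact to produce an undistorted embedding rather than merely an injective one; this is essentially the content of the cited \cite[Prop. 4.1]{dt2022anosov}.
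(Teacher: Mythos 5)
Your proposal is correct and follows essentially the same route as the paper: you construct the same finite-index normal subgroup (the paper's $\Gamma_0$ agrees with your $H$ once $\Gamma_i'$ is replaced by its normal core $N_i$), identify it as a free product of copies of $N_i$ and a free group via the action on the Bass--Serre tree (the paper phrases this with graphs of spaces rather than Kurosh, but the content is identical), and embed that abstract free product into $\Gamma_1' \ast \Gamma_2'$ by conjugating the factors with powers of infinite-order elements and checking undistortion by a syllable-length comparison. The only cosmetic difference is that you conjugate by elements chosen inside $\Gamma_i'$ and send free generators to products $\alpha_1^{p_j}\alpha_2^{p_j}$, while the paper conjugates within the (normalized) $\Gamma_i'$ and realizes the $\Z$ factors as conjugates of a fixed element; both variants give the same ping-pong picture.
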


\begin{proof}
  We may replace each $\Gamma_i'$ with a further finite-index subgroup
  which is normal in $\Gamma_i$. Now let $\Gamma_0 $ be the
  intersection of the kernels of the compositions
  $\Gamma_1 \ast \Gamma_2 \to \Gamma_i \to \Gamma_i/\Gamma_i'$. This
  intersection has finite index in $\Gamma_1 \ast \Gamma_2$. Moreover,
  it is isomorphic to a free product of the form
  \begin{equation}
    \label{eq:free_finite_index_free_product}
    \Gamma_1' \ast \cdots \ast \Gamma_1' \ast \Gamma_2' \ast \cdots
    \ast \Gamma_2' \ast \Z \ast \cdots \ast \Z.
  \end{equation}
  To see this, let $X$ be a graph of spaces for the free product
  $\Gamma_1 \ast \Gamma_2$, consisting of a pair of spaces $X_1, X_2$
  with $\pi_1X_i \simeq \Gamma_i$, attached along an edge. Let
  $X_1', X_2'$ be the covers of $X_1, X_2$ corresponding to the
  subgroups $\Gamma_1', \Gamma_2'$ respectively. Observe that the
  finite cover $X_0$ of $X$ corresponding to $\Gamma_0$ is itself a
  finite graph of spaces, and that each vertex space of $X_0$ is
  homeomorphic to one of $X_1', X_2'$; it follows that $\pi_1X_0$ is
  isomorphic to a free product of the form in
  \eqref{eq:free_finite_index_free_product}.

  However, any free product as in
  \eqref{eq:free_finite_index_free_product} itself embeds as an
  undistorted subgroup of the free product $\Gamma_1' \ast
  \Gamma_2'$. To see this, let $\gamma_i \in \Gamma_i'$ have infinite
  order; then the subgroup of $\Gamma_1' \ast \Gamma_2'$ given by
  \[
    \big \langle \gamma_2\Gamma_1'\gamma_2^{-1}, \gamma_2^2 \Gamma_1'
    \gamma_2^{-2}, \ldots \gamma_2^r\Gamma_1'\gamma_2^{-r},
    \gamma_1\Gamma_2'\gamma_1^{-1}, \gamma_1^s\Gamma_2'\gamma_1^{-s},
    \gamma_1^{s+1}\gamma_2\gamma_1^{-s-1}, \ldots, \gamma_1^{s+q}
    \gamma_2 \gamma_1^{-s-q}\big \rangle
  \]
  is naturally isomorphic to the free product of $r$ copies of
  $\Gamma_1'$, $s$ copies of $\Gamma_2'$, and $q$ copies of $\Z$.
\end{proof}

\begin{proof}[Proof of \Cref{cor:qi_free_products}]
  Suppose that $\Gamma_1, \Gamma_2$ are finitely-generated discrete
  subgroups of $\SL_d(\Real)$, each of which is quasi-isometrically
  embedded. \Cref{discrete-freeprod-1} implies that, for finite-index
  subgroups $H_1 < \Gamma_1, H_2 < \Gamma_2$, we may realize the free
  product $H_1 \ast H_2$ as a quasi-isometrically embedded subgroup of
  $\SL_n(\Real)$. Then, by the previous lemma, a finite-index subgroup
  $\Gamma_0$ of the free product $\Gamma_1 \ast \Gamma_2$ is
  quasi-isometrically embedded in $\SL_n(\Real)$.

  Now, if $\Gamma < \Gamma'$ is a subgroup of index $m$, and
  $\rho:\Gamma \to \SL_n(\Real)$ is a faithful representation, then
  one may always construct a faithful representation
  $\rho':\Gamma' \to \SL_{nm}(\Real)$ so that the restriction
  $\rho'|_{\Gamma}$ has a subrepresentation isomorphic to $\rho$. In
  particular, if $\rho$ is a quasi-isometric embedding, then so is
  $\rho'$. Applying this to $\Gamma_0 < \Gamma_1 \ast \Gamma_2$
  completes the proof.
\end{proof}

\bibliographystyle{siam}

\bibliography{biblio.bib}

\end{document}